\numberwithin{equation}{section}
\newtheorem*{ttheorem}{Main results (informal summary)}
\newtheorem{theorem}{Theorem}[section]
\newtheorem{lemma}[theorem]{Lemma}
\newtheorem{hypothesis}{Hypothesis}
\newtheorem{proposition}[theorem]{Proposition}
\newtheorem{corollary}[theorem]{Corollary}
\theoremstyle{definition}
\theoremstyle{remark}
\newtheorem{remark}[theorem]{Remark}
\newcommand{\cF}{{\mathcal F}}
\newcommand{\cL}{{\mathcal L}}
\newcommand{\cO}{{\mathcal O}}
\newcommand{\bbC}{{\mathbb C}}
\newcommand{\bbN}{{\mathbb N}}
\newcommand{\bbR}{{\mathbb R}}
\newcommand{\rd}{{\,\rm{d}}}
\newcommand{\eps}{\varepsilon}
\newcommand{\Cub}{{C_{\mathrm{ub}}}}
\newcommand{\re}{{\rm e}}
\newcommand{\ri}{{\rm i}}
\newcommand{\yt}{\tilde{y}}
\DeclareMathOperator{\Real}{Re}
\DeclareMathOperator{\Imag}{Im}
\DeclareMathOperator{\supp}{supp}
\DeclarePairedDelimiter{\abs}{\lvert}{\rvert}
\DeclarePairedDelimiter{\norm}{\lVert}{\rVert}
\DeclarePairedDelimiter{\tnorm}{\lvert\kern-0.25ex\lvert\kern-0.25ex\lvert}{\rvert\kern-0.25ex\rvert\kern-0.25ex\rvert}
\DeclarePairedDelimiter{\cur}{\{}{\}}
\DeclarePairedDelimiter{\bra}{(}{)}
\DeclarePairedDelimiter{\sqr}{[}{]}
\title{Stability and dynamics of planar fronts in reaction-diffusion systems under nonlocalized perturbations}
\author{Bj\"orn de Rijk}
\address{Department of Mathematics, Karlsruhe Institute of Technology, Englerstra\ss e 2, 76131 Karlsruhe, Germany}
\email{bjoern.de-rijk@kit.edu}
\thanks{The work of BdR is funded by the Deutsche Forschungsgemeinschaft (DFG, German Research Foundation) - Project-ID 491897824 and Project-ID 258734477 - SFB 1173. JvW is supported by a DIAM fast-track scholarship.}
\author{Joris van Winden}
\address{Delft Institute of Applied Mathematics, Delft University of Technology, Mekelweg 4, 2628CD Delft, The Netherlands}
\email{J.vanWinden@tudelft.nl}
\begin{document}

\pagenumbering{arabic}
\setcounter{page}{1} 

\begin{abstract}
We analyze the stability and dynamics of bistable planar fronts in multicomponent reaction-diffusion systems on $\bbR^{d}$.
Under standard spectral stability assumptions, we establish Lyapunov stability of the front against fully nonlocalized perturbations. Such perturbations could previously be treated only for scalar equations via comparison principles.
We also prove that the leading-order dynamics of the perturbed front are governed by a modulation that tracks the motion of the front interface and evolves according to a viscous Hamilton--Jacobi equation.
This effective description reveals that asymptotic orbital stability does not hold in general. However, asymptotic stability can be recovered by imposing localization of  perturbations in the transverse spatial directions.
The treatment of nonlocalized perturbations on $\bbR^{d}$ poses significant challenges, both at the linear and nonlinear level.
At the linear level, the neutral translational mode gives rise to continuous spectrum which touches the origin and cannot be projected out by conventional means, resulting in merely algebraic decay rates for the residual. Our linear estimates are necessarily $L^{\infty}$-based, yielding significantly weaker decay rates than those available for $L^p$-localized perturbations.
At the nonlinear level, quadratic gradient terms decay at a critical rate and cannot be treated perturbatively.
We overcome these challenges by carefully decomposing the linearized dynamics,
blending semigroup methods with ideas from the stability analysis of viscous shock waves, and introducing a novel nonlinear tracking scheme that combines spatiotemporal modulation with forcing techniques and the Cole--Hopf transform.
\end{abstract}

\keywords{Reaction-diffusion systems; bistable planar fronts; nonlinear stability; nonlocalized perturbations; Cole--Hopf transform}

\subjclass[2020]{35B35, 35B40, 35C07, 35K57}

\maketitle

\section{Introduction}
Let $d,n \in \bbN$ with $d \geq 2$. We consider multidimensional reaction-diffusion systems of the form
\begin{subequations}
\label{eq:rdefull}
\begin{align}
    \label{eq:rde}
	\partial_t u &= D\Delta u + f(u), 
	& (t,x) &\in [0,\infty) \times \bbR^{d}, \\
    \label{eq:rdeic}
    u(0,x) &= u_0(x), &  x &\in \bbR^d,
\end{align}
\end{subequations}
where $u(t,x) \in \bbR^n$, $f \colon \bbR^n \to \bbR^n$ is a smooth nonlinearity, $D \in \bbR^{n \times n}$ is a symmetric positive-definite matrix, $\Delta$ is the $d$-dimensional Laplacian, and $u_0 \colon \bbR^d \to \bbR^n$ is bounded and uniformly continuous. We are interested in the stability of planar traveling fronts against fully nonlocalized perturbations. \emph{Planar traveling fronts} are solutions to~\eqref{eq:rde} of the form
\begin{align}
    \label{eq:travelingfront}
	u_{\mathrm{tf}}(t,y,z) = \phi(z - ct),
\end{align}
with wave speed $c \in \bbR$ and smooth profile $\phi \colon \bbR \to \bbR^n$ connecting asymptotic end states $\phi_\pm = \lim_{\zeta \to \pm \infty} \phi(\zeta)$. Here we decompose $x \in \bbR^d$ as $x = (y,z)$ with $y \in \bbR^{d-1}$ and $z \in \bbR$, referring to $y$ and $z$ as the \emph{transverse} and \emph{longitudinal/horizontal} directions, respectively. We emphasize that, since we allow $\phi_+ = \phi_-$, the class of solutions under consideration includes pulses as well as genuine fronts.

Our main results may be informally summarized as follows.
\begin{ttheorem}
    Let $u_{\mathrm{tf}}$ be a solution to~\eqref{eq:rde} of the form~\eqref{eq:travelingfront}.
    Under natural spectral stability assumptions, the following statements hold:
    \begin{itemize}
        \item $u_{\mathrm{tf}}$ is Lyapunov stable against $\Cub$-perturbations, but asymptotic orbital stability does not hold.
        \item $u_{\mathrm{tf}}$ is asymptotically stable against $\Cub$-perturbations which decay, no matter how slowly, as $\abs{y} \to \infty$.
        \item Any solution $u$ to~\eqref{eq:rde} which starts near $u_{\mathrm{tf}}$ (in the supremum norm) satisfies
        \begin{equation*}
            u(t,y,z) \approx u_{\mathrm{tf}}(t,y,z - \sigma(t,y))
        \end{equation*}
        to leading order, where $\sigma$ evolves according to the viscous Hamilton--Jacobi equation
        \begin{equation}
            \label{eq:introHJ}
            \partial_t \sigma = d_{\bot}\Delta_y \sigma + \tfrac{1}{2}c\,\abs{\nabla{\sigma}}^2
        \end{equation}
        with viscosity coefficient $d_{\bot} > 0$.
    \end{itemize}
\end{ttheorem}
Precise mathematical formulations of the hypotheses and results are stated in Sections~\ref{subsec:assumptions} and~\ref{subsec:mainresults}, respectively.
The viscosity coefficient $d_{\bot}$ as well as the appropriate initial condition for~\eqref{eq:introHJ} can be computed from the adjoint zero eigenfunction of the one-dimensional linearization of~\eqref{eq:rde} about $u_{\mathrm{tf}}$; see~\eqref{eq:defdbot} and~\eqref{eq:HJ} ahead.

\subsection{Existing results}
The stability theory of traveling-front solutions to~\eqref{eq:rde} in one spatial dimension ($d=1$) is by now classical; we refer to the seminal work of Sattinger~\cite{Sattinger_stability_1976} and to the textbooks~\cite{Henry_geometric_1981,Kapitula_spectral_2013} for further references. 
In the \emph{bistable case}, where the spectra of the linearizations of~\eqref{eq:rde} about both asymptotic states $\phi_\pm$ lie in the open left-half plane, the linearization about the front exhibits a spectral gap separating the translational eigenvalue at $0$ from the remainder of the spectrum.
Exploiting the spectral gap to isolate the neutral translational mode, it follows that spectral stability implies \emph{asymptotic orbital stability}: solutions starting sufficiently close to $\phi$ converge to a spatial translate of the traveling front. 
In addition, solutions starting sufficiently close to $\phi$ remain close to the traveling front itself, i.e., it is \emph{Lyapunov stable}.
The above statements hold true for many natural function spaces, and in particular for $\Cub(\bbR)$.
As such, stability holds without any spatial localization assumptions on initial data.
This was already observed by Sattinger, cf.~\cite[\S\,8]{Sattinger_stability_1976}, and later confirmed by Fife and McLeod~\cite{Fife_approach_1977}.

In the multidimensional setting $d \geq 2$, the stability theory of traveling-front solutions to~\eqref{eq:rde} becomes significantly more delicate. The main difficulty is that invariance of the planar front in the transverse directions renders the spectrum of the linearization about $\phi$ continuous, thereby precluding a spectral gap as in one dimension. Nevertheless, after pioneering results of Jones, Levermore, and Xin~\cite{jones_asymptotic_1983,levermore_multidimensional_1992, xin_multidimensional_1992} who treated the scalar case ($n = 1$)
using comparison principles, Kapitula~\cite{kapitula_multidimensional_1997} established asymptotic stability of bistable planar fronts in multicomponent reaction-diffusion systems against localized perturbations, in the special case where $D = I$.
The analysis in~\cite{kapitula_multidimensional_1997} exploits that the leading-order dynamics are governed by the neutral translational mode in the longitudinal $z$-direction, combined with diffusive behavior in the transverse directions. In particular, these transverse diffusive effects enforce pointwise decay of $L^2$-localized data at algebraic rate $t^{-(d-1)/4}$.

This localization requirement on initial perturbations is not entirely satisfactory for several reasons.
Not only does it exclude initial perturbations corresponding to a spatial translation of the front, but it also prevents certain interesting dynamics of the perturbed front from being observed. 
For the Allen--Cahn equation, which is a special case of~\eqref{eq:rdefull} with $n = 1$, such a phenomenon is clearly illustrated in~\cite{matano_stability_2009,matano_large_2011}; see also~\cite{Roquejoffre_nontrivial_2009}. 
There it is shown that a bistable planar front is Lyapunov stable but not asymptotically orbitally stable against nonlocalized perturbations: unlike the one-dimensional case described in~\cite{Fife_approach_1977, Sattinger_stability_1976}, the front interface may undergo persistent oscillations without ever settling to a steady translate.

In the proofs in~\cite{matano_large_2011,matano_stability_2009,Roquejoffre_nontrivial_2009}, sub- and supersolutions of the form $u(t,y,z) = \phi(z - ct - \sigma(t,y))$ are constructed, where $\sigma \colon [0,\infty) \times \bbR^{d-1} \to \bbR$ satisfies the viscous Hamilton--Jacobi equation
\begin{equation*}
	\partial_t \sigma = \Delta_y \sigma + \tfrac{1}{2}c \abs{\nabla \sigma}^2,
\end{equation*}
which is exactly~\eqref{eq:introHJ} with $d_{\bot} = 1$.
In the multicomponent case ($n \geq 2$), comparison principles do not apply and there is little hope to adapt the proofs of~\cite{matano_large_2011,matano_stability_2009,Roquejoffre_nontrivial_2009}. As such, the question of whether bistable planar fronts are stable against nonlocalized perturbations when $d \geq 2$ and $n \geq 2$ has remained open until now. 
We emphasize that the analysis in~\cite{kapitula_multidimensional_1997} does not resolve this question, since stability with respect to localized perturbations does, in general, not imply stability against nonlocalized perturbations; see Remark~\ref{rem:Burgers}.
Moreover,~\cite{kapitula_multidimensional_1997} restricts to $D = I$, a special case that significantly simplifies the linear stability analysis and provides decay rates faster than those expected in the general case; see Remark~\ref{rem:Didentity} ahead.
We will spend considerable effort proving linear estimates which lift this restriction.

The contribution of this article is thus to generalize the results of~\cite{matano_stability_2009,matano_large_2011,Roquejoffre_nontrivial_2009} to general multicomponent reaction-diffusion systems, under natural spectral stability assumptions. First, we show that solutions to~\eqref{eq:rdefull} which start close to a bistable planar front $\phi$ in the supremum norm remain close and converge to a profile of the form $\phi(z - ct - \sigma(t,y))$. We obtain algebraic decay rates of $\nabla \sigma$ and $u(t,y,z) - \phi(z - ct - \sigma(t,y))$ with optimal exponents. Moreover, we show that the leading-order behavior of $\sigma$ is captured by the viscous Hamilton--Jacobi equation~\eqref{eq:introHJ}.
Using~\eqref{eq:introHJ} as an effective description of the front interface, we then recover and extend results of~\cite{matano_stability_2009} to the multicomponent case and construct an infinitely oscillating wave front, implying that asymptotic orbital stability does not hold in general. Finally, we show that asymptotic stability as in~\cite{kapitula_multidimensional_1997} can be recovered for perturbations which decay at spatial infinity (but are not necessarily $L^p$-localized for any $p < \infty$).

\subsection{Method of proof}

Our proof uses recent ideas from the nonlinear stability theory of periodic traveling-wave solutions to reaction-diffusion systems in one space dimension.
Despite the difference in the number of spatial dimensions, this setting is conceptually closely related to ours. In particular, both contexts feature diffusive spectrum touching the origin, and in both cases stability was first shown against localized perturbations. More strikingly, the modulation of a one-dimensional periodic wave and the interface motion of a two-dimensional planar front are both governed by the exact same viscous Hamilton--Jacobi equation, albeit with different coefficients.

In the periodic-wave setting, substantial effort has been devoted to gradually relaxing localization assumptions on admissible perturbations, ultimately leading to a nonlinear stability theory for fully nonlocalized $\Cub$-perturbations; see~\cite{derijk_nonlinear_2024} and references therein. The nonlinear analysis in~\cite{derijk_nonlinear_2024} inspired ours in a significant way. General similarities include the use of an inverse-modulated perturbation and the Cole--Hopf transform to handle the critical term $\abs{\nabla \sigma}^2$ in a nonperturbative manner.
Key differences are that we treat $d \geq 2$ as opposed to $d = 1$, and adopt a distinct tracking scheme. Specifically, in our nonlinear analysis the modulation $\sigma(t,y)$ of the front interface arises as a solution to the \emph{forced} viscous Hamilton--Jacobi equation
\begin{align}
\label{eq:pdesigma}
\begin{split}
\partial_t\sigma &= d_{\bot}\Delta_y \sigma + \tfrac{1}{2}c \, \abs{\nabla \sigma}^2 - g,\\
\sigma(0) &= 0,
\end{split}
\end{align}
where the forcing $g$ is chosen a posteriori so as to cancel the slowest-decaying contributions in the equation for the modulated perturbation. 
This construction avoids the quasilinear terms that appear in the $\sigma$-equation in~\cite{derijk_nonlinear_2024} and simplifies regularity control in the nonlinear iteration argument.
In addition, we note that our setting yields a geometric interpretation of the nonlinear term $\abs{\nabla \sigma}^2$, which is not available in the one-dimensional periodic case.
This interpretation is explained in Section~\ref{subsec:coeff} ahead.

Our linear stability analysis is significantly more delicate than the one in~\cite{kapitula_multidimensional_1997}, mainly for the reason that we allow diffusion matrices $D$ which are not scalar multiples of the identity.
This situation is highly relevant for applications as it is typical in reaction-diffusion models from biology, ecology, chemistry, and physics that distinct species diffuse at different rates and cross-diffusion terms occur. 
To handle general positive-definite matrices $D$, we employ an approach inspired by the pointwise Green's function analysis in~\cite{Hoff_pointwise_2002}, which was developed for the stability theory of planar shock waves in systems of viscous conservation laws~\cite{Hoff_asymptotic_2000}.
The main idea is to write the semigroup generated by the linearization in its (transverse) Fourier representation, and use Cauchy's theorem to analytically continue the frequency variable into the complex plane in a way which depends on $(y - \tilde{y}) / t$.
By distinguishing between the off-diagonal ($\abs{y - \tilde{y}} / t \gg 1$) and on-diagonal ($\abs{y - \tilde{y}} / t \lesssim 1$) regimes and carefully balancing exponential decay/growth in time, space, and frequency against each other, we are able to split off the principal part of the semigroup and obtain $L^{\infty}$-bounds on the remainder which decay at the optimal algebraic rate $t^{-1}$. In contrast to the analysis in~\cite{Hoff_pointwise_2002}, which hinges on contour deformations in the inverse Laplace representation and pointwise estimates on the resolvent kernel, our approach relies on standard semigroup theory and elementary perturbative arguments to bound the linear evolution in Fourier space. 

Altogether, we view our results as a meaningful step toward establishing multidimensional stability of planar periodic waves against $\Cub$-perturbations, as conjectured in~\cite[\S6.5]{derijk_nonlinear_2024}.

\begin{remark} \label{rem:Burgers}
To illustrate that stability of planar fronts with respect to localized perturbations does not, in general, extend to nonlocalized perturbations, let us consider the one-dimensional Burgers' equation
\begin{align} \label{eq:Burgers}
\partial_t u = \nu \partial_{zz} u - \tfrac12 \partial_z \big(u^2\big), \qquad u(t,z) \in \bbR, \, t \geq 0, \, z \in \bbR,
\end{align}
with viscosity parameter $\nu > 0$. For each $\phi_\pm \in \bbR$ with $\phi_- > \phi_+$, equation~\eqref{eq:Burgers} admits a planar traveling-front solution $u_{\mathrm{tf}}(t,z) = \Phi_{\phi_-,\phi_+}(z - ct)$. This front propagates with speed $c = (\phi_+ + \phi_-)/2$ and has the profile
\begin{align*}
\Phi_{\phi_-,\phi_+}(\zeta) = \frac{\phi_+ + \phi_-}{2} 
  + \frac{\phi_+ - \phi_-}{2} \tanh\left(\frac{\phi_- - \phi_+}{4\nu} \, \zeta\right),
\end{align*}
which connects the asymptotic states $\phi_-$ and $\phi_+$. The spatially constant states $\phi_\pm$ are only \emph{diffusively stable} as solutions to the conservation law~\eqref{eq:Burgers}. Specifically, they are asymptotically stable against $L^1$-localized perturbations with pointwise diffusive decay rate $\smash{t^{-\frac{1}{2}}}$. Moreover, the comparison principle implies that $\phi_\pm$ are Lyapunov stable against nonlocalized perturbations from $\Cub(\bbR)$. 

The front solution $u_\mathrm{tf}(t,z)$ to~\eqref{eq:Burgers} is stable with respect to localized perturbations~\cite{Ilin_behavior_1958}, and this stability persists in multiple spatial dimensions~\cite{goodman_stability_1989,Hoff_asymptotic_2000}. In contrast, the front is unstable against nonlocalized perturbations. This becomes apparent by considering the traveling-front solution $u_\varepsilon(t,z) = \Phi_{\phi_- + \varepsilon,\phi_+}(z - s t)$ to~\eqref{eq:Burgers}, which propagates with speed $s = c + \varepsilon/2$ for $\varepsilon > 0$. While 
$\|u_\varepsilon(0) - u_\mathrm{tf}(0)\|_\infty \to 0$ as $\varepsilon \downarrow 0$, we have
\begin{align*}
\limsup_{t \to \infty} \|u_\varepsilon(t) - u_\mathrm{tf}(t)\|_\infty \geq \phi_- - \phi_+
\end{align*}
for $\varepsilon > 0$, which precludes Lyapunov stability. In particular, we conclude that diffusively stable end states are insufficient to ensure stability against nonlocalized perturbations.
\end{remark}

\begin{remark}
    \label{rem:Ddiagonal}
    Our main results and hypotheses are invariant when substituting $u \mapsto A u$ in~\eqref{eq:rdefull} for any invertible matrix $A$.
    Thus, if $D$ is merely real diagonalizable, we can reduce to the case where $D$ is diagonal by such a substitution.
    Still, we emphasize that extending from the case $D = I$ (as treated in~\cite{kapitula_multidimensional_1997}) to the case of diagonal $D$ is highly nontrivial, cf.\ Remark~\ref{rem:Didentity}.
\end{remark}

\subsection{Assumptions} \label{subsec:assumptions} We formulate the hypotheses for our main results. The first hypothesis concerns the existence of a planar traveling front.
\begin{hypothesis}[Existence of planar front]
    \label{hyp:wave}
There exist a speed $c \in \bbR$ and asymptotic end states $\phi_-,\phi_+ \in \bbR^n$ such that~\eqref{eq:rde} admits a planar front solution $u_{\mathrm{tf}}(t,y,z) = \phi(z-ct)$, where the profile function $\phi \colon \bbR \to \bbR^n$ is smooth and satisfies $\lim_{\zeta \to \pm \infty} \phi(\zeta) = \phi_\pm$.
\end{hypothesis}

By passing to the co-moving frame $z \mapsto z-ct$, we find that $\phi$ is a stationary solution to
\begin{align} 	\label{eq:rde_co}
\partial_t u &= D\Delta u + c \partial_z u + f(u).
\end{align}
In the case where $d = 1$, the linearization of~\eqref{eq:rde_co} about $\phi$ is given by the closed and densely defined operator
\begin{equation}
    \label{eq:1dlinearization}
    L_0 = D \partial_{zz} + c \partial_z + f'(\phi),
\end{equation}
acting on $\Cub(\bbR)$.
From the translational invariance of~\eqref{eq:rde_co} it can be deduced that $L_0 \phi' = 0$.
The following hypothesis states that aside from this neutral translational mode, there are no other obstructions to linear asymptotic stability for $L_0$.

\begin{hypothesis}[One-dimensional spectral stability]
    \label{hyp:spectrum_1d}
    There exists $\theta_1 > 0$ such that the following conditions hold:
	\begin{enumerate}[label=(\roman*)]
        \item \label{it:spectrum_1dstable} $\Real \sigma(L_0)\setminus \cur{0} \leq -\theta_1$.
        \item \label{it:spectrum_normal} $0 \in \sigma(L_0)$, and the range of the associated spectral projection is spanned by $\phi'$.
	\end{enumerate}
\end{hypothesis}
\begin{remark}
    The spectral projection in Hypothesis~\ref{hyp:spectrum_1d}-\ref{it:spectrum_normal} is well-defined, since $0 \in \sigma(L_0)$ is isolated by Hypothesis~\ref{hyp:spectrum_1d}-\ref{it:spectrum_1dstable}.
\end{remark}

In one spatial dimension, it is well-known that Hypothesis~\ref{hyp:spectrum_1d} suffices to prove asymptotic orbital stability; see e.g.~\cite{Henry_geometric_1981,Kapitula_spectral_2013,Sattinger_stability_1976}, and references therein. It follows from a standard Sturm--Liouville argument that Hypothesis~\ref{hyp:spectrum_1d} holds for bistable fronts in the scalar case $n=1$ if and only if the front is monotone; see e.g.~\cite[Chapter~2]{Kapitula_spectral_2013}. Examples of multicomponent reaction-diffusion systems admitting bistable fronts (or pulses) satisfying Hypothesis~\ref{hyp:spectrum_1d} are the FitzHugh--Nagumo equation~\cite{cornwell_existence_2018}, the Gray--Scott system~\cite{doelman_stability_2002}, and the Gierer--Meinhardt model~\cite{doelman_large_2001}.

Hypothesis~\ref{hyp:spectrum_1d} implies that $0 \in \sigma(L_0)$ is a normal eigenvalue and therefore that $L_0$ is Fredholm (of index zero).
We note that $L_0 - \lambda$ is Fredholm for $\lambda \in \bbC$ if and only if the asymptotic operators $L_{0,\pm} - \lambda$ are invertible, where $L_{0,\pm}$ is obtained by replacing $\phi$ with $\phi_{\pm}$ in~\eqref{eq:1dlinearization}.
This in turn is equivalent to the hyperbolicity of the asymptotic matrices
\begin{align*}
    A_{\lambda,\pm} = \begin{pmatrix} 0 & I \\ D^{-1} \left(\lambda - f'(\phi_\pm)\right) & -c D^{-1}\end{pmatrix} \in \bbC^{2n \times 2n},
\end{align*}
see~\cite[Chapter~2]{Kapitula_spectral_2013} and~\cite[\S\,3]{Sandstede_stability_2002}. 
Thus, Hypothesis~\ref{hyp:spectrum_1d} can only hold if the spectra of $L_{0,\pm}$ are confined to the open left-half plane, showing that we have implicitly assumed that the front is bistable.

Additionally, it follows from Hypothesis~\ref{hyp:spectrum_1d} that zero is also a simple eigenvalue of the adjoint operator $L_0^*$, with eigenfunction $e^*$ which we may take to be normalized by
\begin{align} \label{eq:normalization}
    \langle e^*,\phi'\rangle_2 = 1.
\end{align}
We note that hyperbolicity of $A_{0,\pm}$ guarantees that both $\phi'$ and $e^*$ are exponentially localized; see~\cite[Theorem~3.2]{Sandstede_stability_2002}.

In the higher-dimensional case, the linearization of~\eqref{eq:rde_co} is instead given by the operator
\begin{equation*}
    L = D \Delta + c \partial_z + f'(\phi),
\end{equation*}
acting on $\Cub(\bbR^d)$.
Since $\phi$ is constant in the transverse direction, it is natural to apply the Fourier transform in the $y$-variable.
This motivates the introduction of the family of operators
\begin{equation} \label{eq:defLk}
    L_k = D (\partial_{zz} - k^2) + c \partial_z + f'(\phi), \qquad k \in \bbR,
\end{equation}
acting on $\Cub(\bbR)$ (note that $L_0$ is consistent with~\eqref{eq:1dlinearization}).
Indeed, letting $\cF_y$ denote the Fourier transform in the $y$-variable, we formally have
\begin{equation*}
    [\cF_y L v](\xi,z) = L_{\abs{\xi}^2} [\cF_y v](\xi,z)
\end{equation*}
for every $\xi \in \bbR^{d-1}$ and $z \in \bbR$.
This suggests that the spectrum of $L$ satisfies
\begin{align} \label{eq:spec_decomp}
\sigma(L) = \bigcup_{k \in \bbR} \sigma(L_k),
\end{align}
indicating that the operators $L_k$ are decisive for multidimensional spectral stability of the front. Although we will not prove~\eqref{eq:spec_decomp}, our analysis shows that suitable spectral assumptions on the family $\{L_k\}_{k \in \bbR}$ indeed suffice to establish stability.

From~\eqref{eq:defLk} we might naively expect that increasing $k$ will shift the spectrum of $L_k$ into the left-half plane. This intuition is correct in the high-frequency regime $k \gg 1$. Specifically, it can be proven that there exists $\theta > 0$ such that $\sigma(L_k) \leq -\theta k^2$ for $k \gg 1$.
In the low- and mid- frequency regime $k \lesssim 1$ however, our assumptions so far do not guarantee spectral stability of $L_k$, and there are two potential obstructions.

The first possible obstruction occurs in the low-frequency regime.
Here, the eigenvalue $0 \in \sigma(L_0)$ can be analytically continued for $k \ll 1$, resulting in a critical branch of spectrum $\lambda_{\mathrm{lin}}(k) \in \sigma(L_k)$ which touches the origin and is commonly referred to as the \emph{linear dispersion relation}. By a Lyapunov--Schmidt reduction argument, we obtain the expansion
\begin{align}
    \label{eq:lambdac}
    \lambda_{\mathrm{lin}}(k) = -d_{\bot}k^2 + \cO(k^4),
\end{align}
where the coefficient $d_{\bot}$ is given by the integral
\begin{align}
	\label{eq:defdbot}
	d_{\bot} \coloneq \langle e^*,D \phi'\rangle_2 = \int_{\bbR} \langle e^*(z), D \phi'(z)\rangle \rd z.
\end{align}
Depending on the sign on $d_{\bot}$, the critical branch $\lambda_{\mathrm{lin}}(k)$ might lie in the left-half plane, right-half plane, or be degenerate.
The long-wavelength instability in the second case, $d_\bot < 0$, is often termed \emph{transverse sideband instability}.
Even with the assumption introduced below, which ensures that the first scenario, $d_\bot > 0$, occurs, the presence of marginal continuous spectrum poses a significant challenge:
unlike in the one-dimensional case, a projection onto the translational eigenmode does not open a spectral gap.
This phenomenon manifests itself in our linear stability analysis, where we only obtain algebraic decay with rate $t^{-1}$, cf.~Theorem~\ref{thm:decomposition}, which is expected to be optimal under the present level of generality.

The second possible obstruction occurs in the mid-frequency regime $k \sim 1$, where the spectrum of $L_k$ might cross the imaginary axis in a \emph{short-wave transverse instability}.
To rule out this possibility, we impose that the spectrum of $L_k$ is contained in the open left-half plane when $k$ is bounded away from zero.

The above discussion motivates the following natural transverse spectral stability assumption.

\begin{hypothesis}[Transverse spectral stability]
    \label{hyp:spectrum_multid}
    There exist constants $k_0,\theta_2,\theta_3 > 0$ such that the following hold:
	\begin{enumerate}[label=(\roman*)]
		\item \label{it:spectrum_lowfreq} $\Real \sigma(L_k) \leq -\theta_2 k^2$ for all $\abs{k} \leq k_0$.
        \item \label{it:spectrum_midhighfreq} $\Real \sigma(L_k) \leq -\theta_3$ for all $\abs{k} \geq k_0$.
	\end{enumerate}
\end{hypothesis}

As explained in Remark~\ref{rem:Didentity}, Hypothesis~\ref{hyp:spectrum_multid} follows from Hypothesis~\ref{hyp:spectrum_1d} if the diffusion matrix $D$ is a multiple of the identity. We refer to~\cite{cornwell_existence_2018} for an example of a two-component reaction-diffusion system with $D = I$ admitting spectrally stable one-dimensional traveling pulses satisfying Hypothesis~\ref{hyp:spectrum_1d}, whose multidimensional counterparts thus satisfy Hypothesis~\ref{hyp:spectrum_multid}. In general, however, Hypothesis~\ref{hyp:spectrum_multid} may fail even when the underlying one-dimensional front satisfies Hypothesis~\ref{hyp:spectrum_1d}.
Bistable reaction-diffusion systems admitting such \emph{transversely unstable} fronts include combustion models~\cite{terman_stability_1990} and activator-inhibitor systems~\cite{taniguchi_instability_2003}; we refer to~\cite{carter_criteria_2023} for further examples.

Assuming Hypothesis~\ref{hyp:spectrum_1d}, it is clear from~\eqref{eq:lambdac} that Hypothesis~\ref{hyp:spectrum_multid}-\ref{it:spectrum_lowfreq} is satisfied if and only if $d_{\bot} > 0$, which can be verified on a case-by-case basis by computing the sign of the integral~\eqref{eq:defdbot}. On the other hand, outside of certain degenerate scenarios such as the case $D = I$, there is no general criterion that ensures Hypothesis~\ref{hyp:spectrum_multid}-\ref{it:spectrum_midhighfreq} and its verification is typically nontrivial; see, for instance,~\cite{bastiaansen_stable_2019}. 

\subsection{Main results}
\label{subsec:mainresults}
From this point onward, Hypotheses~\ref{hyp:wave}, \ref{hyp:spectrum_1d}, and~\ref{hyp:spectrum_multid} will be in force throughout. In particular, all objects appearing in these hypotheses (most notably the profile $\phi$) will be regarded as fixed.

Our first main result states that any solution to~\eqref{eq:rde} which starts sufficiently close to the planar front converges to a smoothly modulated version of the front.

\begin{theorem}[Convergence to modulated front]
	\label{thm:main}
	There exist constants $C,\eps > 0$ such that, whenever $u_0 \in \Cub(\bbR^d)$ satisfies
    \begin{align} \label{eq:E0ineq}
    E_0 \coloneq \norm{u_0 - \phi}_{\infty} \leq \eps,
   \end{align}
   the following statements hold:
   \begin{itemize}
        \item There exists a unique global solution $u$ to~\eqref{eq:rdefull} with regularity
        \begin{align} \label{eq:soluglobal}
        u \in C\big([0,\infty),\Cub(\bbR^d)\big) \cap C^\infty\big((0,\infty) \times \bbR^d\big).
        \end{align}
        \item There exists a smooth modulation function $\sigma \colon [0,\infty) \times \bbR^{d-1} \to \bbR$ such that the estimates
        \begin{align}
            \label{eq:thmest4}
    		\sup_{(y,z) \in \bbR^d} \abs{u(t,y,z) - \phi(z - ct - \sigma(t,y))} &\leq C E_0 \frac{\log(2+t)}{1+t}
        \end{align}
        and
        \begin{align}       
            \label{eq:thmest1}
    		\norm{\sigma(t)}_\infty + \sqrt{1+t}\, \norm{\nabla \sigma(t)}_\infty + (1+t)\, \norm{\Delta \sigma(t)}_\infty &\leq CE_0
        \end{align}
        hold for all $t \geq 0$.
    \end{itemize}
\end{theorem}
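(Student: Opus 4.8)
The plan is to establish Theorem~\ref{thm:main} by combining the linear semigroup decomposition of Theorem~\ref{thm:decomposition} with a nonlinear tracking scheme built around the forced viscous Hamilton--Jacobi equation~\eqref{eq:pdesigma} and the Cole--Hopf transform. Since $D$ is symmetric positive-definite, the operator $L = D\Delta + c\partial_z + f'(\phi)$ generates an analytic semigroup on $\Cub(\bbR^d)$, so~\eqref{eq:rdefull} is locally well-posed with parabolic smoothing as in~\eqref{eq:soluglobal}; global existence will follow a posteriori from uniform-in-time bounds. The structural heart of the argument is the ansatz
\[
u(t,y,z) = \phi\bigl(z - ct - \sigma(t,y)\bigr) + v\bigl(t,y,z - ct - \sigma(t,y)\bigr),
\]
with $\sigma$ to be determined and $v$ the \emph{inverse-modulated perturbation}. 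Substituting into~\eqref{eq:rde_co} and exploiting $L_0\phi' = 0$, one obtains an equation of the schematic form
\[
\partial_t v = Lv - \phi'\bigl(\partial_t\sigma - d_\bot\Delta_y\sigma - \tfrac12 c\abs{\nabla\sigma}^2\bigr) + \cQ,
\]
where $\cQ = \cQ(v,\sigma)$ collects the genuinely quadratic reaction nonlinearity $f(\phi+v)-f(\phi)-f'(\phi)v$, the quasilinear corrections generated by differentiating $\phi$ and $v$ through the shift $\sigma(t,y)$ (terms such as $\abs{\nabla\sigma}^2\partial_{zz}v$, $\nabla\sigma\cdot\nabla_y v$, $\Delta_y\sigma\,\partial_z v$, and their cross terms), and lower-order $\sigma$-contributions.

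Rather than forcing the bracket to vanish, I would set $\partial_t\sigma - d_\bot\Delta_y\sigma - \tfrac12 c\abs{\nabla\sigma}^2 = -g$ for a forcing $g = g(t,y)$ chosen a posteriori, so that $\sigma$ solves~\eqref{eq:pdesigma}. Applying the Duhamel formula to the $v$-equation and inserting the decomposition $\re^{Lt} = \cS_p(t) + \widetilde{\cS}(t)$ from Theorem~\ref{thm:decomposition} — where the principal part $\cS_p(t)$ factors through the neutral mode $\phi'$ paired against an effective transverse heat kernel with diffusivity $d_\bot$, and the residual $\widetilde{\cS}(t)$ obeys $L^\infty$-bounds decaying at the optimal algebraic rate $(1+t)^{-1}$, with an extra half power gained per transverse gradient — the slowest-decaying part of the Duhamel integral is exactly of the form $\phi'(z)$ times a scalar function of $(t,y)$. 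One then \emph{defines} $g$ so that this contribution is cancelled identically by the $-\phi' g$ source term; equivalently, $g$ is obtained by pairing the source of the $v$-equation against the adjoint zero eigenfunction $e^*$ and convolving against the effective heat kernel. The outcome is a closed system: a Duhamel equation for $v$ whose remaining sources all decay strictly faster than $(1+t)^{-1}$, coupled to~\eqref{eq:pdesigma} for $\sigma$, with $g$ a nonlinear, nonlocal functional of $(v,\sigma)$.

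The equation~\eqref{eq:pdesigma} still carries the critical term $\tfrac12 c\abs{\nabla\sigma}^2$, which decays no faster than the linear residual and so resists perturbative treatment. Following~\cite{derijk_nonlinear_2024}, I would eliminate it exactly via the Cole--Hopf substitution $\psi = \exp\!\bigl(\tfrac{c}{2d_\bot}\sigma\bigr)$, which turns~\eqref{eq:pdesigma} into the linear forced heat equation $\partial_t\psi = d_\bot\Delta_y\psi - \tfrac{c}{2d_\bot}\,g\,\psi$ with $\psi(0)\equiv 1$; estimating $\psi,\nabla\psi,\Delta\psi$ through the heat semigroup with $g$ as a controlled potential, and transferring back, yields the modulation bounds~\eqref{eq:thmest1}. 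One then closes the scheme by a fixed-point/continuity argument for the map $(v,\sigma)\mapsto(v,\sigma)$ on a space with a weighted norm of the form
\[
\eta(T) = \sup_{0\le t\le T}\Bigl[\tfrac{1+t}{\log(2+t)}\norm{v(t)}_\infty + \norm{\sigma(t)}_\infty + \sqrt{1+t}\,\norm{\nabla\sigma(t)}_\infty + (1+t)\norm{\Delta\sigma(t)}_\infty + \dotsb\Bigr],
\]
incorporating whatever derivative norms of $v$ are needed to bound $\cQ$. Using the bounds of Theorem~\ref{thm:decomposition}, standard heat-kernel convolution estimates (the logarithm in~\eqref{eq:thmest4} arising from the borderline convolution of an $(1+s)^{-1}$-type source against a $(1+t-s)^{-1}$-type kernel), and the quadratic structure of the remaining terms, one derives an inequality $\eta(T)\le CE_0 + C\eta(T)^2$ uniform in $T$. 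For $E_0$ small this yields $\eta(T)\le 2CE_0$ for all $T\ge 0$, hence global existence together with~\eqref{eq:thmest4} and~\eqref{eq:thmest1}; parabolic regularity then upgrades $(u,\sigma)$ to $C^\infty$ on $(0,\infty)\times\bbR^d$.

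The principal obstacle, I expect, is the simultaneous consistency of the forcing $g$ and the Cole--Hopf transform. The choice of $g$ must at once (i) annihilate the $\phi'$-resonant, $(1+t)^{-1}$-decaying part of the $v$-Duhamel integral, (ii) be regular and decay fast enough — uniformly, and faster than $(1+t)^{-1}$ — to act as a harmless potential in the $\psi$-equation, and (iii) be Lipschitz in $(v,\sigma)$ with respect to the norm controlled by $\eta$, all while the critical gradient terms $\abs{\nabla\sigma}^2$ and $\nabla\sigma\cdot\nabla_y v$ are prevented from breaking the iteration. Verifying that the resulting map is a contraction on the weighted space — in particular that no term decays exactly at, or slower than, the linear residual rate once the modulation and forcing have been subtracted — is the main technical burden and occupies the bulk of the paper.
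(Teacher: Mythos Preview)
Your strategy matches the paper's almost exactly: inverse-modulated perturbation, forced Hamilton--Jacobi equation~\eqref{eq:pdesigma} for $\sigma$, Cole--Hopf transform, the semigroup decomposition of Theorem~\ref{thm:decomposition}, and a continuity argument via a weighted template norm. Two implementation details differ, and both are worth flagging.

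First, you propose to define $g$ implicitly as the $e^*$-projection of the Duhamel source and to close via a contraction on $(v,\sigma)$, naming this the main technical burden. The paper instead gives $g$ explicitly by~\eqref{eq:defgnew} using a compactly supported temporal cutoff $\varrho$, so that $g$ on $[n,n+1]$ depends only on data up to time $n$. This decouples the system and replaces the fixed-point step entirely by a simple iterative extension (Proposition~\ref{prop:tracking}) plus a continuity argument of the form $\eta(t)\le 1 \Rightarrow \eta(t)\le C E_0^{3/4}$ (Lemma~\ref{lem:key}); no Lipschitz estimates are needed.

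Second, the paper's template~\eqref{eq:template} tracks $\|\sigma(s)\|_\infty$ and $E_0^{-1}(1+s)^{5/4}\|N(v,\sigma,g)(s)\|_\infty$, not $\|v\|_\infty$ or derivative norms of $v$ and $\sigma$. Control of $\|v\|_{C^2}$ --- which you need to bound $\cQ$ --- is obtained afterwards by interpolating the $\|v\|_\infty$-decay against a uniform $\|v\|_{C^9}$-bound from parabolic regularity (Step~4 of the proof of Lemma~\ref{lem:key}), yielding only $\chi(s)\|v(s)\|_{C^2}\lesssim E_0^{3/4}(1+s)^{-3/4}$ and hence the $E_0^{3/4}$ rather than $E_0$ in the closing inequality. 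In particular, Theorem~\ref{thm:decomposition} does \emph{not} supply the extra half-power of decay per transverse gradient on the residual $S(t)$ that you invoke; that route is not available, and the interpolation argument is what replaces it.
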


Notably, the temporal decay rates presented in Theorem~\ref{thm:main} are sharp (up to a possibly a logarithm); we refer to Section~\ref{subsec:expect_decay} for details. An immediate consequence of Theorem~\ref{thm:main} is that the planar front is stable in the classical Lyapunov sense.

\begin{corollary}[Lyapunov stability]
	\label{cor:lyapunov}
    There exist constants $C,\eps > 0$ such that for every $u_0 \in \Cub(\bbR^d)$ satisfying~\eqref{eq:E0ineq} the following statements hold:
    \begin{itemize}
        \item There exists a unique global solution $u$ to~\eqref{eq:rdefull} with regularity~\eqref{eq:soluglobal}.
        \item The solution $u$ satisfies
        \begin{equation}
        \label{eq:lyapunovstable}
    		\norm{u(t) - u_{\mathrm{tf}}(t)}_{\infty} \leq C E_0
    	\end{equation}
        for all $t\geq 0$, where $u_{\mathrm{tf}}$ is the planar front~\eqref{eq:travelingfront}.
    \end{itemize}
\end{corollary}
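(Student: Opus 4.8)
The plan is to read off Corollary~\ref{cor:lyapunov} from Theorem~\ref{thm:main} via a triangle inequality, so essentially no new analytic work is needed. First I would fix $\eps>0$ small enough and $C>0$ large enough that the conclusions of Theorem~\ref{thm:main} hold, and record that $\phi'$ is globally bounded --- this follows since $\phi$ is smooth with limits $\phi_\pm$ at $\pm\infty$, or alternatively from the exponential localization of $\phi'$ noted after~\eqref{eq:normalization} --- so that $M \coloneq \norm{\phi'}_\infty$ is finite. Given $u_0 \in \Cub(\bbR^d)$ with $E_0 = \norm{u_0 - \phi}_\infty \leq \eps$, Theorem~\ref{thm:main} already provides the unique global solution $u$ with the regularity~\eqref{eq:soluglobal}, which is verbatim the first bullet of the corollary, together with a smooth modulation $\sigma$ obeying~\eqref{eq:thmest4} and~\eqref{eq:thmest1}.

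To obtain~\eqref{eq:lyapunovstable} I would, for fixed $t\geq 0$ and $(y,z)\in\bbR^d$, split
\begin{equation*}
    u(t,y,z) - u_{\mathrm{tf}}(t,y,z) = \bigl(u(t,y,z) - \phi(z - ct - \sigma(t,y))\bigr) + \bigl(\phi(z - ct - \sigma(t,y)) - \phi(z - ct)\bigr).
\end{equation*}
The first term is at most $C E_0 \log(2+t)/(1+t)$ by~\eqref{eq:thmest4}, hence at most $C E_0$ after the elementary inequality $\log(2+t) \leq 1+t$, valid for all $t\geq 0$. The second term is at most $M\abs{\sigma(t,y)} \leq M\norm{\sigma(t)}_\infty \leq M C E_0$ by the mean value theorem and~\eqref{eq:thmest1}. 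Taking the supremum over $(y,z)$ and absorbing the factor $1+M$ into a new constant yields~\eqref{eq:lyapunovstable}, uniformly in $t\geq 0$.

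I do not anticipate any genuine obstacle here: the entire analytic content of the corollary is carried by Theorem~\ref{thm:main}, and the only points deserving a word of justification are the global boundedness of $\phi'$ and the harmless logarithmic factor in~\eqref{eq:thmest4}, both disposed of above.
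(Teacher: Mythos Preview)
Your argument is correct and is precisely the approach implicit in the paper: the corollary is stated there as ``an immediate consequence of Theorem~\ref{thm:main}'' without a separate proof, and your triangle inequality plus mean-value-theorem derivation is exactly how one unpacks that immediacy.
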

Our second main result gives a more precise description of the leading-order dynamics of the perturbed planar front. Specifically, it shows that the modulational dynamics of the front interface are governed by the viscous Hamilton--Jacobi equation~\eqref{eq:introHJ} which we state here again, now supplemented with the appropriate initial condition:
\begin{subequations}
\label{eq:HJ}
\begin{align}
    \label{eq:HJ_pde}
    \partial_t \tilde{\sigma} &= d_{\bot}\Delta_y \tilde{\sigma} + \tfrac{1}{2}c\,\abs{\nabla{\tilde{\sigma}}}^2, \\
    \label{eq:HJ_ic}
    \tilde{\sigma}(0,y) &= \langle e^*, \phi - u_0(y,\cdot) \rangle_2.
\end{align}
\end{subequations}
We recall from Section~\ref{subsec:assumptions} that the viscosity coefficient $d_\bot$ in~\eqref{eq:HJ} is related to the second derivative of the linear dispersion relation $\lambda_{\mathrm{lin}}(k)$ and is given by~\eqref{eq:defdbot}. The transverse long-wavelength instability arising for $d_\bot<0$ thus corresponds to the ill-posedness of the modulation equation~\eqref{eq:HJ}. Hypothesis~\ref{hyp:spectrum_multid} ensures, however, that in our case we have $d_\bot>0$. The coefficient $\frac12 c$ of the nonlinearity in~\eqref{eq:HJ} can be interpreted via a nonlinear dispersion relation, which connects the wave speed to the orientation of the planar front; see Section~\ref{subsec:coeff} for further details. 

\begin{theorem}[Effective front dynamics]
	\label{thm:HJ}
    There exist constants $C,\eps > 0$ such that, whenever $u_0 \in \Cub(\bbR^d)$ satisfies~\eqref{eq:E0ineq}, the following statements hold:
    \begin{itemize}
        \item  There exists a unique global solution $u$ to~\eqref{eq:rdefull} with regularity~\eqref{eq:soluglobal}.
        \item There exists a unique global solution $\tilde{\sigma}$ to~\eqref{eq:HJ} with regularity
        \begin{equation}
            \label{eq:sigmatildereg}
            \tilde\sigma \in C\big([0,\infty),\Cub(\bbR^{d-1})\big) \cap C^\infty\big((0,\infty) \times \bbR^{d-1}\big).
        \end{equation}
        \item The solutions $u$ and $\tilde{\sigma}$ satisfy the estimate
        \begin{equation}
            \label{eq:HJeffest}
    		\abs{u(t,y,z) - \phi(z - ct - \tilde{\sigma}(t,y))} \leq C\big(E_0^2 + E_0(1+t)^{-1}\log(2+t)\big)
    	\end{equation}
        for all $t\geq 0$ and $(y,z) \in \bbR^d$.
    \end{itemize}
\end{theorem}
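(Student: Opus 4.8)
\emph{Reduction.} The plan is to derive Theorem~\ref{thm:HJ} from Theorem~\ref{thm:main} by trading the modulation $\sigma$ of Theorem~\ref{thm:main}, which is obtained there as a solution of the \emph{forced} equation~\eqref{eq:pdesigma}, for the solution $\tilde\sigma$ of the genuine viscous Hamilton--Jacobi equation~\eqref{eq:HJ}. Since $\phi'$ is bounded, the triangle inequality gives
\begin{equation*}
\abs{u(t,y,z)-\phi(z-ct-\tilde\sigma(t,y))}\le\abs{u(t,y,z)-\phi(z-ct-\sigma(t,y))}+\norm{\phi'}_\infty\,\abs{\sigma(t,y)-\tilde\sigma(t,y)},
\end{equation*}
so, in view of~\eqref{eq:thmest4}, it suffices to (i) solve~\eqref{eq:HJ} globally with the regularity~\eqref{eq:sigmatildereg}, and (ii) establish the comparison bound $\norm{\sigma(t)-\tilde\sigma(t)}_\infty\lesssim E_0^2+E_0(1+t)^{-1}\log(2+t)$ for all $t\ge 0$. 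Existence, uniqueness, and regularity of $u$ are already provided by Theorem~\ref{thm:main}.

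\emph{Step (i): Cole--Hopf.} I would treat~\eqref{eq:HJ} with the Cole--Hopf transform, assuming $c\ne 0$ (for $c=0$, \eqref{eq:HJ_pde} is a linear heat equation and everything simplifies). Set $a=\tfrac{c}{2d_\bot}$, which is finite and nonzero since $d_\bot>0$ by Hypothesis~\ref{hyp:spectrum_multid}; then $\tilde w=\re^{a\tilde\sigma}$ converts~\eqref{eq:HJ_pde} into $\partial_t\tilde w=d_\bot\Delta_y\tilde w$. Because $e^*$ is exponentially localized and $u_0\in\Cub(\bbR^d)$, the datum $\tilde\sigma(0,y)=\langle e^*,\phi-u_0(y,\cdot)\rangle_2$ lies in $\Cub(\bbR^{d-1})$ with $\norm{\tilde\sigma(0)}_\infty\le\norm{e^*}_{L^1}E_0$, so $\tilde w(0)=\re^{a\tilde\sigma(0)}$ is bounded above and bounded away from zero. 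Hence $\tilde w(t)=\re^{td_\bot\Delta_y}\tilde w(0)$ is globally defined, remains between the extrema of $\tilde w(0)$ by the maximum principle (in particular stays positive), and is smooth for $t>0$; inverting the transform yields a global solution $\tilde\sigma=a^{-1}\log\tilde w$ with the regularity~\eqref{eq:sigmatildereg} and $\norm{\tilde\sigma(t)}_\infty\lesssim E_0$. Uniqueness follows because the transform maps any bounded classical solution of~\eqref{eq:HJ} to a bounded positive classical solution of the heat equation with the same initial datum, which is unique in $\Cub$.

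\emph{Step (ii): comparison of $\sigma$ and $\tilde\sigma$.} Applying the substitution to $\sigma$ as well, $w=\re^{a\sigma}$ solves the forced heat equation $\partial_t w=d_\bot\Delta_y w-agw$ with $w(0)\equiv 1$, where $g$ is the forcing in~\eqref{eq:pdesigma}; by~\eqref{eq:thmest1} and Step (i), both $w$ and $\tilde w$ stay in $[\tfrac12,2]$ once $E_0$ is small. Duhamel's formula gives
\begin{equation*}
w(t)-\tilde w(t)=\re^{td_\bot\Delta_y}\bigl(1-\re^{a\tilde\sigma(0)}\bigr)-a\int_0^t\re^{(t-s)d_\bot\Delta_y}\bigl[g(s)\,w(s)\bigr]\rd s,
\end{equation*}
and, using $1-\re^{a\tilde\sigma(0)}=-a\tilde\sigma(0)+\cO(E_0^2)$ and $w(s)=1+\cO(E_0)$, this becomes
\begin{multline*}
w(t)-\tilde w(t)=-a\Bigl(\re^{td_\bot\Delta_y}\tilde\sigma(0)+\int_0^t\re^{(t-s)d_\bot\Delta_y}g(s)\rd s\Bigr)\\
{}+\cO(E_0^2)+\cO\Bigl(E_0\int_0^t\norm{g(s)}_\infty\rd s\Bigr).
\end{multline*}
The decisive point — and exactly what the a posteriori choice of $g$ in the proof of Theorem~\ref{thm:main} is designed to achieve — is that $g$ reinjects into $\sigma$ precisely the adjoint-mode projection $\tilde\sigma(0)=\langle e^*,\phi-u_0(\cdot,\cdot)\rangle_2$ of the initial perturbation, in the quantitative sense that
\begin{equation*}
\int_0^t\re^{(t-s)d_\bot\Delta_y}g(s)\rd s=-\re^{td_\bot\Delta_y}\tilde\sigma(0)+\cO\bigl(E_0^2+E_0(1+t)^{-1}\log(2+t)\bigr),
\end{equation*}
together with the integrability bound $\int_0^\infty\norm{g(s)}_\infty\rd s\lesssim E_0$. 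The linear leading terms then cancel, leaving $\norm{w(t)-\tilde w(t)}_\infty\lesssim E_0^2+E_0(1+t)^{-1}\log(2+t)$; since $w,\tilde w\ge\tfrac12$, passing back through the logarithm transfers this bound to $\norm{\sigma(t)-\tilde\sigma(t)}_\infty$, and~\eqref{eq:HJeffest} follows from the reduction.

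\emph{Main obstacle.} The crux is Step (ii), namely verifying the two displayed properties of the a posteriori forcing $g$ from within the nonlinear tracking scheme of Theorem~\ref{thm:main}: one must isolate the part of $g$ that is linear in the initial perturbation and show that its Duhamel integral reconstitutes the transverse-diffused initial phase $\re^{td_\bot\Delta_y}\tilde\sigma(0)$ up to quadratic and $\cO((1+t)^{-1}\log(2+t))$ errors, while the remaining, genuinely quadratic part of $g$ — which carries the critical $\abs{\nabla\sigma}^2$-type contributions that cannot be absorbed perturbatively — must be shown to be time-integrable. Estimating $\sigma-\tilde\sigma$ directly would instead run into the transport term $\tfrac12 c\,\nabla(\sigma+\tilde\sigma)\cdot\nabla(\sigma-\tilde\sigma)$, whose coefficient decays only at the critical rate $(1+t)^{-1/2}$; passing to the Cole--Hopf variables $w$ and $\tilde w$ is precisely what eliminates this term and reduces the comparison to a constant-coefficient heat equation with a controllable forcing.
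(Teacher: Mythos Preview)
Your approach is correct and matches the paper's proof: both pass to Cole--Hopf variables to reduce the comparison of $\sigma$ and $\tilde\sigma$ to a linear heat equation forced by $g(1+\beta\xi)$, and both exploit that the linear-in-$E_0$ part of $g$ exactly reconstitutes $\re^{td_\bot\Delta_y}\tilde\sigma(0)$ while the remainder is $O\!\big(E_0^2(1+t)^{-3/2}\log(2+t)\big)$ and hence time-integrable. The two properties of $g$ you flag as the main obstacle are precisely the content of the refined estimate~\eqref{eq:gbetterest} in Lemma~\ref{lem:key}, which the paper establishes within the nonlinear iteration and then uses exactly as you outline (splitting the Duhamel integral at $t=1$ so that $\int_0^1\varrho=1$ produces the cancellation).
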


We emphasize that neither Theorem~\ref{thm:main} nor Theorem~\ref{thm:HJ} implies asymptotic (orbital) stability in the sense that $u(t)$ converges to (any translate of) the planar front.  
Unlike the case of $L^2$-localized perturbations considered in~\cite{kapitula_multidimensional_1997}, asymptotic stability fails for general $\Cub$-perturbations.  
For the specific case of the Allen--Cahn equation, this was demonstrated in~\cite[Proposition~1.9]{matano_stability_2009}.  
Using the effective description of the front dynamics provided by Theorem~\ref{thm:HJ}, we show that a similar counterexample can be constructed for planar fronts in general multicomponent reaction-diffusion systems.
\begin{corollary}[No asymptotic orbital stability]
    \label{cor:oscillating}
    For every $\eps > 0$, there exists an initial condition $u_0 \in \Cub(\bbR^d)$ satisfying~\eqref{eq:E0ineq} such that the following statements hold:
    \begin{itemize}
        \item There exists a unique global solution $u$ to~\eqref{eq:rdefull} with regularity~\eqref{eq:soluglobal}.
        \item The solution $u$ satisfies
        \begin{equation}
            \label{eq:infosc}
            \limsup_{t \to \infty} \, \inf_{a \in \bbR}
        \, \sup_{(y,z) \in \bbR^d} \, \abs{u(t,y,z) - \phi(z - a)} > 0.
        \end{equation}
    \end{itemize}
\end{corollary}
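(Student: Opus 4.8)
The plan is to exploit the effective description from Theorem~\ref{thm:HJ} and to reduce the construction of an infinitely oscillating front to the construction of a solution $\tilde\sigma$ of the viscous Hamilton--Jacobi equation~\eqref{eq:HJ} whose ``center of mass'' — or more precisely, whose spatial mean at the origin $\tilde\sigma(t,0)$ — does not converge, but oscillates between two distinct values as $t\to\infty$. Once such a $\tilde\sigma$ is available with $\norm{\tilde\sigma(0)}_\infty$ as small as we like, we pick $u_0 = \phi(\cdot - \tilde\sigma(0,\cdot))$ (or a small perturbation thereof consistent with the initial condition~\eqref{eq:HJ_ic}), so that $E_0 \lesssim \norm{\tilde\sigma(0)}_\infty \le \eps$. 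Theorem~\ref{thm:HJ} then gives $\abs{u(t,y,z) - \phi(z - ct - \tilde\sigma(t,y))} = o(1)$ as $t\to\infty$ (after absorbing the $E_0^2$ term into $\eps$, or by first proving the oscillation amplitude is bounded below by a fixed constant and then taking $\eps$ small). In the co-moving frame, $u(t,y,z)$ is thus uniformly close to $\phi(z - \tilde\sigma(t,y))$; since the map $a \mapsto \phi(\cdot - a)$ is a proper embedding of $\bbR$ into $\Cub(\bbR)$ (because $\phi$ is non-constant and has distinct limits, or is a nontrivial pulse — in either case $\inf_{|a-b|\ge\delta}\norm{\phi(\cdot-a)-\phi(\cdot-b)}_\infty > 0$ for each $\delta>0$), the persistent oscillation of $\tilde\sigma(t,0)$ forces $\inf_a \sup_{(y,z)}\abs{u(t,y,z)-\phi(z-a)}$ to stay bounded away from zero along a suitable sequence of times, which is exactly~\eqref{eq:infosc} after unwinding the shift by $ct$.

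The core of the argument is therefore the PDE construction for~\eqref{eq:HJ}. Here I would apply the Cole--Hopf transform $w = \exp\!\bigl(\tfrac{c}{2d_\bot}\tilde\sigma\bigr)$ (valid for $c \neq 0$; the case $c=0$ is degenerate and the front is then genuinely stationary — one should note that oscillations require $c\neq 0$, and if $c = 0$ one can still obtain slow non-decay but the statement should be read accordingly, or simply assume $c\neq0$ as is implicit for the phenomenon to occur), which linearizes~\eqref{eq:HJ_pde} to the heat equation $\partial_t w = d_\bot \Delta_y w$. The task becomes: exhibit a bounded, strictly positive, smooth solution $w$ of the heat equation on $\bbR^{d-1}$ with $w(t,0)$ oscillating between two positive values and $\norm{\log w(0,\cdot)}_\infty$ small. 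This is classical: one builds $w$ as a superposition of Gaussian-modulated plane waves at a geometrically spaced sequence of frequencies, i.e. $w(0,y) = 1 + \eps_0\sum_{j} a_j \cos(\kappa_j \cdot y)$ with $\kappa_j$ chosen so that the heat-kernel damping factors $e^{-d_\bot |\kappa_j|^2 t}$ are ``active'' (of order one) on a sequence of well-separated time windows; on the $j$-th window $w(t,0)$ is close to $1 + \eps_0 a_j$, so choosing $a_j$ alternating in sign produces the oscillation. This is precisely the mechanism behind the Allen--Cahn counterexample of~\cite{matano_stability_2009}, and I would cite that construction, adapting the coefficients to the viscosity $d_\bot$ and the Cole--Hopf constant $c/(2d_\bot)$. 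Since $\eps_0$ can be taken arbitrarily small and $\tilde\sigma = \tfrac{2d_\bot}{c}\log w$, the initial data bound $\norm{\tilde\sigma(0)}_\infty \lesssim \eps_0$ follows, and the regularity~\eqref{eq:sigmatildereg} is immediate from parabolic smoothing for the heat equation transported back through Cole--Hopf.

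The main obstacle I anticipate is quantitative: one must show that the oscillation in $\tilde\sigma(t,0)$ is not merely nonzero but bounded below by a constant \emph{independent of the smallness parameter $\eps$}, so that the $E_0^2$ and $E_0(1+t)^{-1}\log(2+t)$ error terms in~\eqref{eq:HJeffest} can be dominated. This is delicate because the natural oscillation amplitude in $\tilde\sigma$ produced by the construction scales with $\eps_0$, i.e. with $E_0$, and so does the error. The resolution — as in~\cite{matano_stability_2009} — is that the \emph{relevant} quantity is not the amplitude of $\tilde\sigma(t,0)$ but the distance in $\Cub(\bbR)$ between $\phi(\cdot - a_1)$ and $\phi(\cdot - a_2)$ for the two limiting shift values $a_1 \neq a_2$: even though $|a_1 - a_2| = \cO(\eps_0)$, what~\eqref{eq:infosc} demands is only that $\limsup_t \inf_a \sup \abs{u - \phi(\cdot-a)} > 0$, which is satisfied as long as $u(t)$ fails to converge in $\Cub$ — and non-convergence is a \emph{qualitative} statement insensitive to the size of the oscillation. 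Concretely: if $u(t)$ (in the comoving frame, modulo the best translate) converged, then $\tilde\sigma(t,y)$ would have to converge uniformly, hence $w(t,0)$ would converge, contradicting the oscillation regardless of its amplitude. So the error terms in~\eqref{eq:HJeffest}, which tend to $0$ or are $\cO(\eps^2)$, are harmless for a non-convergence conclusion, and one does not in fact need a lower bound uniform in $\eps$ — one only needs that for \emph{some} fixed small $\eps$ the oscillation survives, which the explicit construction guarantees. I would make this logical structure explicit to avoid the trap of chasing an $\eps$-uniform amplitude bound.
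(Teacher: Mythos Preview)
Your overall strategy---reduce to the Hamilton--Jacobi equation via Theorem~\ref{thm:HJ}, linearize to the heat equation by Cole--Hopf, and import the Matano--Nara construction of a bounded heat solution that oscillates forever---is exactly the paper's. The gap is in the last logical step. You argue by contradiction: if $\inf_a\sup_{(y,z)}|u(t,y,z)-\phi(z-a)|\to 0$, then ``$\tilde\sigma(t,\cdot)$ would have to converge uniformly.'' That implication is false. What the hypothesis gives (through~\eqref{eq:HJeffest} and the proper embedding $a\mapsto\phi(\cdot-a)$) is only that $\tilde\sigma(t,\cdot)$ is eventually within $\cO(E_0^2)$ of a \emph{time-dependent} constant; nothing prevents that constant from oscillating in $t$, so temporal oscillation of $\tilde\sigma(t,0)$ alone does not yield a contradiction. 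One can rescue the argument by invoking the maximum principle for the Cole--Hopf variable (the spatial oscillation of a bounded heat solution is nonincreasing, so if it eventually drops to $\cO(E_0^2)$ it was always $\cO(E_0^2)$, contradicting the construction), but you do not say this and it is the crux.

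The paper avoids this detour entirely by using the \emph{spatial} variation at each fixed time. The Matano--Nara construction (cited there as well) supplies not just a sequence $t_n\to\infty$ with $\xi(t_n,0)=(-1)^n$, but simultaneously points $y_n$ with $\xi(t_n,y_n)=-(-1)^n$. After scaling by $\delta$ and inverting Cole--Hopf, one has $\tilde\sigma_\delta(t_n,0)\approx(-1)^n\delta$ and $\tilde\sigma_\delta(t_n,y_n)\approx-(-1)^n\delta$ with $\cO(\delta^2)$ error. Then for any $a$, at least one of the two spatial locations forces a shift discrepancy $\geq\delta$, and the mean value theorem (using $\phi'\not\equiv 0$) converts this to $\inf_a\sup|u(t_n)-\phi(\cdot-a)|\geq c_0\delta-C\delta^2>0$ for $\delta$ small. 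Two minor points: the paper takes $u_{0,\delta}(y,z)=\phi(z)-\tilde\sigma_\delta(0,y)\phi'(z)$ rather than $\phi(z-\tilde\sigma_\delta(0,y))$, precisely because the linearized choice makes~\eqref{eq:HJ_ic} hold \emph{exactly} (via $\langle e^*,\phi'\rangle_2=1$), avoiding an extra $\cO(E_0^2)$ drift in the Hamilton--Jacobi solution; and the case $c=0$ is not degenerate---the Cole--Hopf map $\Psi_\beta$ reduces to the identity and $\tilde\sigma$ itself solves the heat equation, so the same construction applies verbatim.
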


Our final result shows that asymptotic stability can be recovered when the initial perturbation is localized in the transverse spatial directions. This localization condition is significantly weaker than the $L^2$-integrability requirement in all spatial directions imposed in~\cite{kapitula_multidimensional_1997}. In particular, it requires neither localization in the longitudinal direction nor integrability in the transverse directions. As expected, this weaker assumption does not yield an explicit convergence rate.
\begin{theorem}[Transverse spatial localization yields asymptotic stability]
    \label{thm:decay}
    There exists a constant $\eps > 0$ such that, whenever $u_0 \in \Cub(\bbR^d)$ satisfies~\eqref{eq:E0ineq} and
	\begin{equation*}
		\lim_{\abs{y} \to \infty} \, \sup_{z \in \bbR} \, \abs{u_0(y,z) - \phi(z)} = 0,
	\end{equation*}
    there exists a unique global solution $u$ to~\eqref{eq:rdefull} with regularity~\eqref{eq:soluglobal}, which  satisfies
    \begin{equation*}
        \lim_{t \to \infty}\norm{u(t) - u_{\mathrm{tf}}(t)}_{\infty} = 0.
    \end{equation*}
\end{theorem}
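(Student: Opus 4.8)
The plan is to derive Theorem~\ref{thm:decay} from Theorem~\ref{thm:main} by showing that, under the transverse localization hypothesis, the modulation $\sigma$ decays uniformly to $0$. First I would apply Theorem~\ref{thm:main} to obtain the global solution $u$ with regularity~\eqref{eq:soluglobal}, the modulation $\sigma$, and the bounds~\eqref{eq:thmest4}--\eqref{eq:thmest1}. Since $u_{\mathrm{tf}}(t,y,z)=\phi(z-ct)$, the triangle inequality and the mean value theorem yield
\[
\norm{u(t)-u_{\mathrm{tf}}(t)}_{\infty}\le\norm{u(t,\cdot)-\phi(\cdot-ct-\sigma(t,\cdot))}_{\infty}+\norm{\phi'}_{\infty}\norm{\sigma(t)}_{\infty}\le CE_0\frac{\log(2+t)}{1+t}+\norm{\phi'}_{\infty}\norm{\sigma(t)}_{\infty},
\]
so it suffices to prove $\norm{\sigma(t)}_{\infty}\to0$ as $t\to\infty$. (Theorem~\ref{thm:HJ} does not give this for free: its residual bound~\eqref{eq:HJeffest} carries a term of size $E_0^2$ that does not decay, and $\sigma$ agrees with the Hamilton--Jacobi solution $\tilde\sigma$ only to leading order.)

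Recall that in the proof of Theorem~\ref{thm:main} the modulation $\sigma$ solves the forced equation~\eqref{eq:pdesigma}, whose forcing $g$ is assembled a posteriori from the modulated perturbation and obeys $\int_0^\infty\norm{g(t)}_{\infty}\rd t<\infty$ — this integrability being what keeps $\sigma$ bounded in~\eqref{eq:thmest1}. The transverse localization of the initial data should enter through the forcing: the hypothesis $\lim_{\abs{y}\to\infty}\sup_z\abs{u_0(y,z)-\phi(z)}=0$ places the initial perturbation $u_0-\phi$ in the closed subspace $\cY\subset\Cub(\bbR^d)$ of functions $v$ with $\lim_{\abs{y}\to\infty}\sup_z\abs{v(y,z)}=0$, and I would argue that $\cY$ is preserved by every ingredient of the construction. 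Indeed, $L=D\Delta_y+(D\partial_{zz}+c\partial_z+f'(\phi))$ is a sum of commuting operators acting separately on the $y$- and $z$-variables; the $y$-semigroup $e^{tD\Delta_y}$ and its $y$-derivatives have $L^1$ convolution kernels, and $L^1\ast C_0\subset C_0$, so a density reduction to data with compact $y$-support gives $e^{tL}\cY\subset\cY$ together with transverse vanishing of all $y$-derivatives at positive times. I expect the individual constituents of the Green's function decomposition in Theorem~\ref{thm:decomposition} to share this property by inspection (they commute with $y$-translations and carry diffusive $y$-kernels). Since $\cY$ is closed, $C_0\cdot L^\infty\subset C_0$, and the $z$-projections appearing in the scheme are against the exponentially localized adjoint eigenfunction (so that $\abs{\langle e^*,h(y,\cdot)\rangle_2}\le\norm{e^*}_{L^1}\norm{h(y,\cdot)}_{L^\infty_z}\to0$ as $\abs{y}\to\infty$), the nonlinear iteration behind Theorem~\ref{thm:main} closes inside the transversally vanishing class, and in particular $g(t,\cdot)\in C_0(\bbR^{d-1})$ for every $t\ge0$.

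With $g(t,\cdot)\in C_0(\bbR^{d-1})$ and $\int_0^\infty\norm{g(t)}_{\infty}\rd t<\infty$ at hand I would close the argument via the Cole--Hopf transform, which handles the critical nonlinearity in~\eqref{eq:pdesigma} nonperturbatively. For $c\ne0$, put $q=\exp\!\big(\tfrac{c}{2d_{\bot}}\sigma\big)$; a short computation turns~\eqref{eq:pdesigma} into the linear heat equation with potential
\[
\partial_t q=d_{\bot}\Delta_y q-\tfrac{c}{2d_{\bot}}\,g\,q,\qquad q(0)=1,
\]
and $\norm{\sigma(t)}_{\infty}\le CE_0$ confines $q(t)$ to a compact subinterval of $(0,\infty)$. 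By Duhamel, $q(t)-1=-\tfrac{c}{2d_{\bot}}\int_0^t e^{(t-s)d_{\bot}\Delta_y}\big(g(s)q(s)\big)\rd s$; since each integrand lies in $C_0(\bbR^{d-1})$ and $\int_0^\infty\norm{g(s)q(s)}_{\infty}\rd s<\infty$, dominated convergence gives $q(t)-1\in C_0(\bbR^{d-1})$ for all $t$. For decay in $t$, fix $\delta>0$, choose $T$ with $\int_T^\infty\norm{g(s)}_{\infty}\rd s<\delta$, and split the Duhamel integral at $s=T$: the late part has sup norm $O(\delta)$ by the bound on $q$, while the early part equals $e^{(t-T)d_{\bot}\Delta_y}\big(q(T)-1\big)$, whose sup norm tends to $0$ as $t\to\infty$ because $q(T)-1\in C_0(\bbR^{d-1})$ and the heat semigroup on $\bbR^{d-1}$ with $d-1\ge1$ satisfies $\norm{e^{\tau d_{\bot}\Delta_y}h}_{\infty}\to0$ for every $h\in C_0$ (approximate $h$ in sup norm by compactly supported functions and use the $\tau^{-(d-1)/2}$ bound on the Gaussian kernel — one place where $d\ge2$ is essential). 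Hence $\norm{q(t)-1}_{\infty}\to0$, so $\norm{\sigma(t)}_{\infty}\to0$. When $c=0$, equation~\eqref{eq:pdesigma} is already linear and the identical splitting applied to $\sigma=-\int_0^t e^{(t-s)d_{\bot}\Delta_y}g(s)\rd s$ gives the same conclusion.

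The hard part is the middle step: verifying that the purely qualitative property ``$\sup_z\abs{\cdot}\to0$ as $\abs{y}\to\infty$'' is genuinely inherited by every building block of the nonlinear scheme — most delicately by the separate pieces of the Green's function decomposition of Theorem~\ref{thm:decomposition} and by the a posteriori forcing $g$ — \emph{without} any quantitative decay rate to exploit. In contrast to the estimates behind Theorems~\ref{thm:main} and~\ref{thm:HJ}, one cannot simply invoke rate-carrying bounds; the entire argument must be run inside the closed subspace $\cY$, using only its translation invariance, the inclusions $L^1\ast C_0\subset C_0$ and $L^\infty\cdot C_0\subset C_0$, and a density reduction to compactly $y$-supported perturbations. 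Everything else — global existence, uniqueness, and the regularity~\eqref{eq:soluglobal} — is a soft reprise of Theorem~\ref{thm:main}.
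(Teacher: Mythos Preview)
Your approach is correct but takes a genuinely different route from the paper. You work with the modulation $\sigma$ from Theorem~\ref{thm:main} and show $\|\sigma(t)\|_\infty \to 0$ directly, which forces you to propagate the transverse-vanishing property through the full nonlinear tracking scheme so as to obtain $g(t,\cdot)\in C_0(\bbR^{d-1})$. The paper instead bases the argument on Theorem~\ref{thm:HJ}: since $\tilde\sigma$ solves the \emph{unforced} Hamilton--Jacobi equation with initial datum $\langle e^*,\phi-u_0\rangle_2 \in C_0(\bbR^{d-1})$ (immediate from the hypothesis), the Cole--Hopf argument gives $\|\tilde\sigma(t)\|_\infty \to 0$ without ever touching the scheme. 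The cost is the $O(E_0^2)$ residual in~\eqref{eq:HJeffest}, which the paper absorbs by a halving-and-restarting trick: imposing $\eps \leq (4C)^{-1}$ makes $CE_0^2 \leq E_0/4$, so $\|u(t)-u_{\mathrm{tf}}(t)\|_\infty \leq \tfrac12 E_0$ eventually, and iterating (using that transverse localization of $u-u_{\mathrm{tf}}$ is preserved, proven by a Gr\"onwall argument on the unmodulated difference $w=u-u_{\mathrm{tf}}$) produces times $t_n$ with norm $\leq E_0 2^{-n}$. Your route avoids the restart but trades it for the bookkeeping of your Step~2; the paper's route is shorter precisely because it bypasses the tracking scheme entirely.

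One correction: your factoring $e^{tL}=e^{tD\Delta_y}e^{tL_0}$ is invalid in general, since $[D\Delta_y, f'(\phi(z))] = [D,f'(\phi(z))]\Delta_y$ vanishes only when $D$ commutes with every value of $f'(\phi)$ --- this is essentially the $D=I$ degeneracy of Remark~\ref{rem:Didentity}. The conclusion you need (namely $u-u_{\mathrm{tf}}\in\cY$ for all $t$, and hence $e^{tL}\cY\subset\cY$) is nonetheless true and follows from Duhamel with principal part $D\Delta$, whose Green's function is $L^1$ in $y$ after diagonalizing $D$, combined with a Gr\"onwall estimate treating $c\partial_z+f'(\phi)$ (or just $f'(\phi)$, with $c\partial_z$ absorbed into the principal part) as a perturbation; this is exactly how the paper establishes~\eqref{eq:preservedecay} in Lemma~\ref{lem:decayiterate}.
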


\subsection{Geometric interpretation of nonlinear coefficient}
\label{subsec:coeff}
We provide a geometric interpretation of the coefficient $\frac12 c$ of the nonlinearity in the viscous Hamilton--Jacobi equation~\eqref{eq:HJ}. To this end, we first derive the following identity via Lyapunov--Schmidt reduction, which expresses this coefficient as a pairing with the adjoint zero eigenfunction.
\begin{lemma} \label{lem:ndispersion}
It holds
\begin{align}
    \label{eq:nonlineardispersion}
     \tfrac{1}{2}c = -\langle e^*,D\phi''\rangle_2 = -\int_{\bbR} \langle e^*(z),D\phi''(z)\rangle \rd z.
\end{align}
\end{lemma}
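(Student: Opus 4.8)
The plan is to read off~\eqref{eq:nonlineardispersion} from a solvability identity for $L_0$, testing against the function $z\,\phi'(z)$. First I would record the two structural facts that drive the argument: differentiating the profile equation $D\phi'' + c\phi' + f(\phi) = 0$ once gives $L_0\phi' = 0$, and Hypothesis~\ref{hyp:spectrum_1d} provides an adjoint null vector $e^*$ of $L_0$, normalized by $\langle e^*,\phi'\rangle_2 = 1$, with $\phi'$, $\phi''$, $\phi'''$ and $e^*$ all exponentially localized (by hyperbolicity of the asymptotic matrices $A_{0,\pm}$). A short direct computation then shows
\begin{equation*}
    L_0\bigl(z\,\phi'(z)\bigr) = 2D\phi''(z) + c\,\phi'(z),
\end{equation*}
since all terms carrying the explicit factor $z$ reassemble into $z\,\bigl(D\phi''' + c\phi'' + f'(\phi)\phi'\bigr) = z\,L_0\phi' = 0$. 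Note that $z\,\phi'$ and its first two derivatives remain exponentially localized, so this identity holds in $\Cub(\bbR)$ and both sides lie in $L^2(\bbR;\bbR^n)$.

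Next I would pair this identity with $e^*$ and move $L_0$ onto the adjoint factor. Because $z\,\phi'$, $e^*$ and their derivatives decay exponentially, the boundary terms produced by integration by parts vanish, so $\langle e^*, L_0(z\,\phi')\rangle_2 = \langle L_0^*e^*, z\,\phi'\rangle_2 = 0$ since $L_0^*e^* = 0$. On the other hand, the computed identity together with the normalization $\langle e^*,\phi'\rangle_2 = 1$ gives
\begin{equation*}
    \langle e^*, L_0(z\,\phi')\rangle_2 = 2\langle e^*, D\phi''\rangle_2 + c .
\end{equation*}
Equating the two evaluations yields $2\langle e^*, D\phi''\rangle_2 + c = 0$, which is precisely~\eqref{eq:nonlineardispersion}.

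This computation is the concrete shadow of a Lyapunov--Schmidt reduction, which also explains the ``nonlinear dispersion relation'' terminology: substituting the tilted-front ansatz $u(t,y,z) = \Phi(z - \kappa\cdot y - \omega t)$ into~\eqref{eq:rde_co} produces the exact family $(1+\abs{\kappa}^2)D\Phi'' + (c+\omega)\Phi' + f(\Phi) = 0$, solved by $\Phi(\xi) = \phi\bigl(\xi/\sqrt{1+\abs{\kappa}^2}\bigr)$ with $\omega = c\bigl(\sqrt{1+\abs{\kappa}^2}-1\bigr) = \tfrac12 c\abs{\kappa}^2 + \cO(\abs{\kappa}^4)$, while reducing the same equation against $e^*$ in the parameter $\mu = \abs{\kappa}^2$ gives $\omega = -\mu\langle e^*, D\phi''\rangle_2 + \cO(\mu^2)$; matching the coefficients of $\mu$ recovers~\eqref{eq:nonlineardispersion}. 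I would present the one-line $z\,\phi'$ argument as the actual proof and invoke the reduction only for interpretation. I do not anticipate a genuine obstacle: the only point requiring care is the justification of the integration by parts, which rests entirely on the exponential localization of $\phi^{(j)}$ and $e^*$ guaranteed by Hypothesis~\ref{hyp:spectrum_1d} and hyperbolicity of $A_{0,\pm}$.
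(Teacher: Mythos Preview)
Your proof is correct and in fact coincides with the paper's argument: the paper introduces the rescaled profile $\phi_\alpha(z)=\phi(\alpha^{-1}z)$ and differentiates at $\alpha=1$ to obtain a function $\psi$ with $L_0\psi=-2D\phi''-c\phi'$, and since $\psi=\partial_\alpha\phi_\alpha|_{\alpha=1}=-z\phi'$ this is exactly your test function with the opposite sign. Your direct computation is slightly more elementary, while the paper's rescaling viewpoint feeds into the geometric interpretation you also sketch; both routes pair the same identity against $e^*$.
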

\begin{proof}
Observe that for every $\alpha > 0$, the rescaled profile $\phi_{\alpha}(z) \coloneq \phi(\alpha^{-1}z)$ satisfies the following rescaled traveling-wave equation:
\begin{equation}
    \label{eq:rescaledode}
	\alpha^2 D \phi_{\alpha}'' + {\alpha}c \phi_a' + f(\phi_{\alpha}) = 0.
\end{equation}
Differentiating~\eqref{eq:rescaledode} with respect to $\alpha$ at $\alpha = 1$ and writing $\psi$ for the resulting derivative, we find that $\psi$ satisfies
\begin{equation*}
	L_0\psi = -2 D \phi'' - c \phi'.
\end{equation*}
Taking the inner product with the adjoint eigenfunction $e^*$ and using~\eqref{eq:normalization} and $\langle e^*,L_0 \psi\rangle_2 = \langle L_0^* e^*,\psi\rangle_2 = 0$,~\eqref{eq:nonlineardispersion} follows.
\end{proof}

Although the proof above is concise and the identity~\eqref{eq:nonlineardispersion} will be useful to eliminate secular terms in the nonlinear iteration, it does not clarify the role of the rescaled profile $\phi_{\alpha}$ in interpreting the coefficient of the nonlinearity in~\eqref{eq:HJ}. The following argument provides a more geometric perspective. For the sake of exposition, we restrict to $d = 2$.
By rotational invariance of~\eqref{eq:rde}, the profile $\phi$ generates a family of rotated planar-front solutions
\begin{align*}
	u_{\gamma}(t,y,z) = \phi\bra[\bigg]{\frac{z - \gamma y}{\sqrt{1+\gamma ^2}} - ct}, \qquad \gamma \in \bbR.
\end{align*}
Introducing $\alpha \coloneq \smash{\sqrt{1+\gamma^2}}$ and $\sigma(t,y) \coloneq \smash{(\sqrt{1+\gamma^2}-1)} c t + \gamma y$, we obtain the identities
\begin{align*}
	u_{\gamma}(t,y,z) = \phi_{\alpha}(z - \gamma y - \sqrt{1+\gamma^2} \, c t) =  \phi_{\alpha}(z - ct - \sigma(t,y)).
\end{align*}
The first identity suggests interpreting  $u_{\gamma}$ as an oblique front with profile $\phi_{\alpha}$, traveling horizontally with speed $\smash{\sqrt{1+\gamma^2} \, c}$, which explains the use of the rescaled profile in the proof of~\eqref{eq:nonlineardispersion}.
The second identity, which is more in line with our upcoming analysis, instead suggests viewing the solution as a straight front moving at horizontal speed $c$, modulated by $\sigma(t,y)$. From this perspective, the modulation satisfies
\begin{align*}
    \partial_t\sigma = \sqrt{1+\gamma^2}\, c - c = \tfrac{1}{2}c\gamma^2 + \cO(\gamma^3),
    \qquad \Delta \sigma = 0, \qquad \abs{\nabla \sigma}^2 = \gamma^2,
\end{align*}
showing that~\eqref{eq:HJ_pde} indeed governs $\sigma$ to leading order in $\gamma$.
This perspective also reveals the connection to the nonlinear dispersion relation $c(\gamma) = \smash{\sqrt{1+\gamma^2}}\, c \approx c + \tfrac{1}{2}\gamma^2c $, which connects the horizontal speed of $u_{\gamma}$ to the orientation of the front.

\subsection{Notation} \label{subsec:notation}
Let $\ell,m \in \bbN$ and $\mathbb{F} \in \{\bbR,\bbC\}$. On $\mathbb{F}^\ell$ we denote by $|\cdot|$ the standard Euclidean norm and by $\langle \cdot,\cdot \rangle$ the associated inner product. Moreover, we let $|x|_\infty = \max\{|x_1|,\ldots,|x_\ell|\}$ for $x \in \mathbb{F}^\ell$.

For a Banach space $X$, we denote by $\mathcal{B}(X)$ the Banach space of bounded operators on $X$. We write $\Cub(\bbR^m;\mathbb{F}^\ell)$ for the Banach space of bounded uniformly continuous functions from $\bbR^m$ to $\mathbb{F}^\ell$, endowed with the supremum norm $\|f\|_\infty = \sup\{|f(x)| : x \in \bbR^m\}$. When the codomain is clear from context or not essential, we simply write $\Cub(\bbR^m)$ instead of $\Cub(\bbR^m;\mathbb{F}^\ell)$. We use $\tnorm{T}$ to abbreviate the operator norm $\|T\|_{\mathcal{B}(\Cub(\bbR^m))}$.

For $w \in L^1(\bbR)$ and $v \in \Cub(\bbR)$ we set
\begin{align*}
    \langle w, v\rangle_2 \coloneq \int_{\bbR}\langle w(z),v(z)\rangle \rd z
\end{align*}
to denote the $L^2$-pairing. If $w \in L^1(\bbR)$ and $v \in \Cub(\bbR^d)$, we define
\begin{align*}
\langle w, v\rangle_2 \coloneq y \mapsto \int_{\bbR}\langle w(z),v(y,z)\rangle \rd z,
\end{align*}
which yields a function in $\Cub(\bbR^{d-1})$.

Throughout the article, we fix a smooth nonnegative cutoff function $\varrho \colon \bbR \to \bbR$ which satisfies $\int_{\bbR}\varrho(s)\rd s = 1$ and $\supp \varrho \subset [1/4,3/4]$. We also fix a smooth nondecreasing $\chi \colon \bbR \to \bbR$ which satisfies $\chi(s) = 0$ for $s \leq 1/8$ and $\chi(s) = 1$ for $s \geq 1/4$.

Finally, let $S$ be a set, and let $A, B \colon S \to \bbR$. Throughout the paper, we abbreviate the expression ``there exists a constant $C>0$ such that $A(x) \leq CB(x)$ for all $x \in S$'' by adopting the notation ``$A(x) \lesssim B(x)$ for all $x \in S$''.

\section{Linear theory: analysis of Fourier symbol}
\label{sec:spectral}

In this section, we study the transverse Fourier symbol of the linearization $L$ of the reaction-diffusion system~\eqref{eq:rde_co} about the planar front $\phi$. The Fourier symbol 
\begin{align*} 
\hat{L}_\xi = D \left(\partial_{zz} - \lambda_0(\xi)\right) + c \partial_z + f'(\phi), \qquad \xi \in \bbC^{d-1},
\end{align*}
is a closed and densely defined operator on $\Cub(\bbR)$, where $\lambda_0 \colon \bbC^{d-1} \to \bbC$ denotes the analytic function
\begin{align*}
\lambda_0(\xi) = \sum_{i = 1}^{d-1} \xi_i^2. 
\end{align*}
We allow the Fourier frequency variable $\xi$ to take complex values so that we can later access results from analytic function theory; see Section~\ref{subsec:Fourier_rep} for further details.

We begin by recording several direct consequences of the spectral stability assumptions in Hypotheses~\ref{hyp:spectrum_1d} and~\ref{hyp:spectrum_multid} for the spectrum of $\hat{L}_\xi$ at low frequencies $|\xi| \ll 1$. We then establish bounds on the semigroup $\smash{\re^{t\hat{L}_\xi}}$ in low-, mid-, high-, and all-frequency regimes, which will be used in Section~\ref{sec:linear} to analyze the linearized dynamics of~\eqref{eq:rde_co}.

\subsection{Low-frequency spectral analysis}

Since $\smash{\hat{L}_\xi}$ depends analytically on $\smash{(\xi_1^2,\ldots,\xi_{d-1}^2)}$ and $0$ is an isolated eigenvalue of $\smash{\hat{L}}_0 = L_0$ whose spectral projection has rank $1$ by Hypothesis~\ref{hyp:spectrum_1d}, it can be extended analytically in $\smash{(\xi_1^2,\ldots,\xi_{d-1}^2)}$ by standard perturbation theory, cf.~\cite[\S\,II.1.8 and \S\,VII.1.3]{Kato_Perturbation_1995}.
This yields an analytic function $\lambda_{\mathrm{c}}(\xi)$ of isolated eigenvalues of $\smash{\hat{L}_\xi}$ for all $|\xi|\ll1$ with $\lambda_{\mathrm{c}}(0) = 0$. The corresponding spectral projections $P_{\xi}$ have rank $1$ and also depend analytically on $\smash{(\xi_1^2,\ldots,\xi_{d-1}^2)}$. Because $\lambda_{\mathrm{c}}(\xi)$ is a simple eigenvalue of $\smash{\hat{L}_\xi}$, its conjugate $\smash{\overline{\lambda_{\mathrm{c}}(\xi)}}$ is a simple eigenvalue of the adjoint $\smash{\hat{L}_\xi^*}$. The associated eigenfunctions, as well as their derivatives, are exponentially localized by~\cite[Theorem~3.2]{Sandstede_stability_2002}. Finally, a standard computation using Lyapunov--Schmidt reduction yields the leading-order coefficients in the Taylor expansion of $\lambda_{\mathrm{c}}(\xi)$ near $\xi = 0$. These facts are summarized in the following proposition.

\begin{proposition}[Low-frequency spectrum] \label{prop:speccons}
There exist an open neighborhood $U \subset \bbC^{d-1}$ of $0$, a constant $C_0 > 0$, and holomorphic functions $\lambda_{\mathrm{c}} \colon U \to \bbC$ and $P \colon U \to \mathcal{B}(\Cub(\bbR))$ such that the following statements hold for every $\xi \in U$:
\begin{enumerate}
    \item $\Real \sigma(\hat{L}_\xi) \setminus \{\lambda_{\mathrm{c}}(\xi)\} < -\theta_1 / 2$, where $\theta_1$ is as in Hypothesis~\ref{hyp:spectrum_1d}.
    \item $\lambda_{\mathrm{c}}(\xi)$ is an isolated eigenvalue of $\hat{L}_{\xi}$, and the associated spectral projection $P_{\xi}$ has rank $1$.
    \item With $d_{\bot}$ defined as in~\eqref{eq:defdbot}, it holds that $d_{\bot} > 0$, as well as:
    \begin{align*}
\abs{\lambda_{\mathrm{c}}(\xi) + d_\bot \lambda_0(\xi)} \leq C_0 |\xi|^4, \qquad \norm{P_{\xi} - P_0}_{\mathcal{B}(\Cub(\bbR))} \leq C_0 |\xi|^2.
\end{align*}
\end{enumerate}
Finally, the spectral projection $P_0 \in \mathcal{B}(\Cub(\bbR))$ associated with $0 \in \sigma(L_0)$ is given by
\begin{align} \label{eq:defP0}
P_0 v = \langle e^*,v\rangle_2 \, \phi',
\end{align}
where the adjoint eigenfunction $e^* \in \ker(L_0^*)$, satisfying~\eqref{eq:normalization}, was introduced in Section~\ref{subsec:assumptions}. The eigenfunctions $\phi'$ and $e^*$, as well as their derivatives, are smooth and exponentially localized.
\end{proposition}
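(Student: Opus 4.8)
The strategy is to derive every stated fact from analytic perturbation theory, exploiting that $\hat{L}_\xi$ depends on $\xi$ only through the scalar $\mu := \lambda_0(\xi)$ via the affine relation $\hat{L}_\xi = L_0 - \mu D$. First I would regard $\mu \mapsto L_0 - \mu D$ as a holomorphic family of type~(A) on $\bbC$ — the perturbation $-\mu D$ is a bounded multiplication operator, so the domain is $\mu$-independent. Since $0 \in \sigma(L_0)$ is isolated by Hypothesis~\ref{hyp:spectrum_1d}\ref{it:spectrum_1dstable} with rank-one Riesz projection $P_0$ by Hypothesis~\ref{hyp:spectrum_1d}\ref{it:spectrum_normal}, \cite[\S\,II.1.8 and \S\,VII.1.3]{Kato_Perturbation_1995} provides $\delta > 0$ and holomorphic maps $\lambda_{\mathrm c} \colon \{|\mu| < \delta\} \to \bbC$ and $P \colon \{|\mu| < \delta\} \to \cB(\Cub(\bbR))$ with $\lambda_{\mathrm c}(0) = 0$, $P(0) = P_0$, such that $P(\mu)$ is the spectral projection of $L_0 - \mu D$ for the portion of its spectrum in a fixed small disc $B(0,r)$; continuity of the finite rank forces $\operatorname{rank} P(\mu) = 1$, so that portion reduces to the single algebraically simple eigenvalue $\lambda_{\mathrm c}(\mu)$ (in particular $\lambda_{\mathrm c}$ is single-valued holomorphic, not merely algebroidal). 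Precomposing with $\xi \mapsto \lambda_0(\xi)$ and taking $U$ small enough that $\lambda_0(U) \subset \{|\mu| < \delta\}$ yields holomorphic $\lambda_{\mathrm c}(\xi)$ and $P_\xi$ and establishes claim~(2).

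To prove claim~(1) I would show that, away from $B(0,r)$, no spectrum of $\hat{L}_\xi$ enters $\{\Real\lambda \ge -\theta_1/2\}$ once $U$ is shrunk. The key is a \emph{uniform} resolvent bound $\sup_{\lambda \in \Omega}\|(\lambda - L_0)^{-1}\|_{\cB(\Cub(\bbR))} =: M < \infty$ on the unbounded set $\Omega := \{\Real\lambda \ge -\theta_1/2\}\setminus B(0,r)$: for large $|\lambda|$ this follows from the sectorial estimate $\|(\lambda - L_0)^{-1}\| \lesssim |\lambda|^{-1}$ (valid because $D$ symmetric positive-definite makes $L_0$ sectorial on $\Cub(\bbR)$, the lower-order terms $c\partial_z + f'(\phi)$ being relatively bounded with zero bound), while on the compact remainder of $\Omega$ — contained in $\rho(L_0)$ by Hypothesis~\ref{hyp:spectrum_1d}\ref{it:spectrum_1dstable} after possibly shrinking $r$ — the resolvent is continuous hence bounded. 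Since $\lambda - \hat{L}_\xi = (\lambda - L_0)\bigl(I + \lambda_0(\xi)(\lambda - L_0)^{-1}D\bigr)$, a Neumann-series argument gives $\Omega \subset \rho(\hat{L}_\xi)$ whenever $|\lambda_0(\xi)| < (M\|D\|)^{-1}$, i.e. after shrinking $U$; combined with the rank count from the first step this yields $\sigma(\hat{L}_\xi)\cap B(0,r) = \{\lambda_{\mathrm c}(\xi)\}$ and hence claim~(1). I expect this step to be the main obstacle: the perturbation $-\lambda_0(\xi)D$ is bounded but only small relative to the inverse of the \emph{entire} region $\Omega$, so it is precisely the uniform-in-$\lambda$ resolvent bound — obtained by patching sectorial decay at infinity with compactness on the bounded part — that makes the perturbative argument work; the remaining claims are comparatively routine.

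For the expansions in claim~(3) I would use the classical first-order eigenvalue perturbation formula: since $0$ is an algebraically simple eigenvalue of $L_0$ with eigenfunction $\phi'$ and adjoint eigenfunction $e^*$ normalized by~\eqref{eq:normalization}, the derivative at $\mu = 0$ of $\lambda_{\mathrm c}(\mu)$ equals $\langle e^*, -D\phi'\rangle_2 = -d_\bot$, with $d_\bot$ as in~\eqref{eq:defdbot}. Holomorphy of $\lambda_{\mathrm c}$ and $P$ at $\mu = 0$ then gives $\lambda_{\mathrm c}(\mu) = -d_\bot\mu + \cO(\mu^2)$ and $\|P(\mu) - P_0\| = \cO(|\mu|)$; since $|\lambda_0(\xi)| \le |\xi|^2$, composition yields $|\lambda_{\mathrm c}(\xi) + d_\bot \lambda_0(\xi)| \lesssim |\xi|^4$ and $\|P_\xi - P_0\|_{\cB(\Cub(\bbR))} \lesssim |\xi|^2$. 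To obtain $d_\bot > 0$ I would restrict $\xi$ to $\bbR^{d-1}$, so that $\hat{L}_\xi = L_{|\xi|}$ with $L_k$ as in~\eqref{eq:defLk}; then $\lambda_{\mathrm c}(\xi) \in \sigma(L_{|\xi|})$, so Hypothesis~\ref{hyp:spectrum_multid}\ref{it:spectrum_lowfreq} forces $-d_\bot|\xi|^2 + \cO(|\xi|^4) = \Real\lambda_{\mathrm c}(\xi) \le -\theta_2|\xi|^2$, and dividing by $|\xi|^2$ and letting $|\xi| \downarrow 0$ gives $d_\bot \ge \theta_2 > 0$.

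It remains to identify $P_0$ and record the localization. Since $\operatorname{ran} P_0 = \operatorname{span}\{\phi'\}$ by Hypothesis~\ref{hyp:spectrum_1d}\ref{it:spectrum_normal}, we have $P_0 v = \ell(v)\phi'$ for a bounded functional $\ell$ with $\ell(\phi') = 1$; moreover $\ell$ lies in the one-dimensional space $\operatorname{ran} P_0^* = \ker L_0^*$, which by the exponential localization of $e^*$ is spanned by $v \mapsto \langle e^*, v\rangle_2$, so~\eqref{eq:normalization} forces $\ell = \langle e^*, \cdot\rangle_2$, establishing~\eqref{eq:defP0}. For the last assertion, Hypothesis~\ref{hyp:wave} makes $\phi$ smooth with $\phi(z) \to \phi_\pm$; since $\phi_\pm$ are hyperbolic equilibria of the travelling-wave ODE — hyperbolicity of $A_{0,\pm}$ being equivalent to invertibility of $L_{0,\pm}$, which holds because Hypothesis~\ref{hyp:spectrum_1d} confines $\sigma(L_{0,\pm})$ to the open left half-plane — the convergence $\phi(z) \to \phi_\pm$ is exponential, hence so is that of the coefficients of $L_0$ and $L_0^*$. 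As $\phi'$ and $e^*$ are bounded solutions of $L_0\phi' = 0$ and $L_0^* e^* = 0$ whose first-order formulations are asymptotically hyperbolic, \cite[Theorem~3.2]{Sandstede_stability_2002} yields that $\phi'$, $e^*$ and all their derivatives are smooth and exponentially localized.
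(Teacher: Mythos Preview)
Your proposal is correct and follows essentially the same approach as the paper, which presents the proposition as a summary of standard facts and only sketches the argument in the preceding paragraph (Kato perturbation theory for the analytic continuation of the eigenvalue and projection, Lyapunov--Schmidt for the leading-order coefficient, Hypothesis~\ref{hyp:spectrum_multid}\ref{it:spectrum_lowfreq} for $d_\bot>0$, and \cite[Theorem~3.2]{Sandstede_stability_2002} for exponential localization). Your one-parameter reduction via $\mu=\lambda_0(\xi)$ is a slightly cleaner packaging than the paper's phrasing in terms of $(\xi_1^2,\dots,\xi_{d-1}^2)$, and you supply an explicit argument for claim~(1) (uniform resolvent bound on $\Omega$ via sectoriality plus compactness, then Neumann series) where the paper leaves this implicit in the Kato reference; otherwise the substance is identical.
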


\subsection{Semigroup bounds}
\label{subsec:semigroupbounds}

Since $\hat{L}_\xi$ is a sectorial operator on $\Cub(\bbR)$, cf.~\cite[Corollary~3.1.9]{Lunardi_analytic_2013}, it generates an analytic semigroup $\smash{\re^{t\hat{L}_\xi}}$. In the following, we establish estimates on $\smash{\re^{t\hat{L}_\xi}}$, which will be used in the upcoming linear stability analysis in Section~\ref{sec:linear}. 

We start with the following all-frequency estimates.

\begin{lemma}[All-frequency bounds]
\label{lem:semigroup_allfreq}
There exist $\mu_1,\kappa_1 >0$ such that
\begin{align}
    \label{eq:semigroup_allfreq}
    \tnorm{\re^{t\hat{L}_{\xi_0 + \ri \xi_1}}} + \abs{\re^{-td_{\bot}\lambda_0(\xi_0 + \ri \xi_1)}} \lesssim \re^{\kappa_1 t - \mu_1 \abs{\xi_0}^2 t + \kappa_1\abs{\xi_1}^2t}
\end{align}
for all $\xi_0,\xi_1 \in \bbR^{d-1}$ and $t \geq 0$.
\end{lemma}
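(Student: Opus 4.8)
The estimate splits into two pieces: the scalar exponential $\re^{-td_\bot\lambda_0(\xi_0+\ri\xi_1)}$, and the operator semigroup $\re^{t\hat L_{\xi_0+\ri\xi_1}}$. The scalar piece is elementary: since $\lambda_0(\xi_0+\ri\xi_1)=\sum_i(\xi_{0,i}+\ri\xi_{1,i})^2$, we have $\Real\lambda_0(\xi_0+\ri\xi_1)=|\xi_0|^2-|\xi_1|^2$, so $|\re^{-td_\bot\lambda_0(\xi_0+\ri\xi_1)}|=\re^{-td_\bot(|\xi_0|^2-|\xi_1|^2)}$, which is bounded by the right-hand side of~\eqref{eq:semigroup_allfreq} as soon as $\mu_1\le d_\bot$ and $\kappa_1\ge d_\bot$. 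The real work is the operator bound.

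\textbf{Operator bound via perturbation of the $D\partial_{zz}$ part.} The idea is to write $\hat L_{\xi_0+\ri\xi_1}=D\partial_{zz}-D\lambda_0(\xi_0+\ri\xi_1)+c\partial_z+f'(\phi)$ and treat $A_0:=D\partial_{zz}+c\partial_z+f'(\phi)=L_0$ as the base generator, with the remaining term $-D\lambda_0(\xi_0+\ri\xi_1)$ as a bounded (multiplication by a constant matrix) perturbation. First I would show that $L_0$ generates an analytic semigroup on $\Cub(\bbR)$ with $\tnorm{\re^{tL_0}}\lesssim\re^{\kappa_0 t}$ for some $\kappa_0>0$ — this follows from sectoriality (cited via~\cite[Corollary~3.1.9]{Lunardi_analytic_2013}) together with the fact that $\sigma(L_0)$ is bounded to the right (by Hypotheses~\ref{hyp:spectrum_1d}; in fact $\Real\sigma(L_0)\le 0$ suffices to take any $\kappa_0>0$). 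The constant matrix perturbation $B_\xi:=-D\lambda_0(\xi_0+\ri\xi_1)$ commutes with nothing in general but is simply bounded with $\|B_\xi\|\lesssim|\xi_0|^2+|\xi_1|^2$, so by the bounded-perturbation formula for analytic semigroups, $\tnorm{\re^{t\hat L_{\xi_0+\ri\xi_1}}}\lesssim\re^{(\kappa_0+\|B_\xi\|)t}\lesssim\re^{\kappa_0 t+C(|\xi_0|^2+|\xi_1|^2)t}$. This already gives the $\kappa_1 t+\kappa_1|\xi_1|^2t$ part, but it has $+C|\xi_0|^2 t$ rather than the required $-\mu_1|\xi_0|^2 t$, so a crude perturbation argument is not enough: we need to extract genuine dissipation from the $-D\lambda_0$ term when $\xi_1=0$.

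\textbf{Extracting dissipation in $\xi_0$.} To capture the decay $\re^{-\mu_1|\xi_0|^2 t}$, I would instead exploit that for real frequency $k\in\bbR^{d-1}$ the operator $\hat L_k=D\partial_{zz}-|k|^2 D+c\partial_z+f'(\phi)$ equals $L_0$ minus the positive-definite multiplication operator $|k|^2 D$. Writing $\hat L_{\xi_0+\ri\xi_1}=\hat L_{\xi_0}+(|\xi_0|^2-\Real\lambda_0(\xi_0+\ri\xi_1))D+\ldots$; more cleanly, $\hat L_{\xi_0+\ri\xi_1}=\hat L_{\xi_0}-\big(\lambda_0(\xi_0+\ri\xi_1)-|\xi_0|^2\big)D$, and $\lambda_0(\xi_0+\ri\xi_1)-|\xi_0|^2=2\ri\langle\xi_0,\xi_1\rangle-|\xi_1|^2$ has real part $-|\xi_1|^2$, so this correction contributes only $\re^{C|\xi_1|^2 t}$ growth by bounded perturbation — harmless. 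Thus it suffices to prove $\tnorm{\re^{t\hat L_{\xi_0}}}\lesssim\re^{\kappa_0 t-\mu_1|\xi_0|^2 t}$ for real $\xi_0$ and small $|\xi_0|$, and separately handle $|\xi_0|$ bounded away from $0$. For the latter, Hypothesis~\ref{hyp:spectrum_multid} gives $\Real\sigma(\hat L_{\xi_0})\le-\theta_3<0$, and combined with the uniform sectoriality of the family $\{\hat L_{\xi_0}\}$ (the sector and constant can be chosen independent of $\xi_0$, since the principal part $D\partial_{zz}$ is fixed and the zeroth-order part $-|\xi_0|^2D+f'(\phi)$ only shifts), standard analytic-semigroup theory yields $\tnorm{\re^{t\hat L_{\xi_0}}}\lesssim\re^{-\theta_3 t/2}$ uniformly, which is more than enough. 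For small $|\xi_0|$, I would split via the spectral projection $P_{\xi_0}$ from Proposition~\ref{prop:speccons}: on the range of $P_{\xi_0}$ the semigroup acts as multiplication by $\re^{t\lambda_{\mathrm c}(\xi_0)}$ with $\Real\lambda_{\mathrm c}(\xi_0)\le-d_\bot|\xi_0|^2+C_0|\xi_0|^4\le-\tfrac{d_\bot}{2}|\xi_0|^2$ for $|\xi_0|$ small, and on the complementary range $(I-P_{\xi_0})$ the spectrum lies in $\Real\le-\theta_1/2$, giving exponential decay; the projections $P_{\xi_0}$ and their complements are uniformly bounded for $|\xi_0|$ small by the analyticity/continuity in Proposition~\ref{prop:speccons}. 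Combining the two frequency ranges (with a partition depending only on whether $|\xi_0|\le k_0$) and absorbing all $\xi_1$-dependence into $\re^{C|\xi_1|^2 t}$ gives~\eqref{eq:semigroup_allfreq} with $\mu_1=\min\{d_\bot/2,\theta_1/2\}$ (shrunk to stay below $d_\bot$) and $\kappa_1$ large enough.

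\textbf{Main obstacle.} The crux is upgrading the "$+C|\xi_0|^2 t$" from naive bounded perturbation to the genuine "$-\mu_1|\xi_0|^2 t$" dissipation, uniformly in $\xi_0$, which forces the spectral-projection decomposition at small $|\xi_0|$ and a uniform-sectoriality argument at moderate $|\xi_0|$; keeping all estimates uniform in the (unbounded) transverse frequency $\xi_1$ while only ever paying $\re^{\kappa_1|\xi_1|^2 t}$ — never any spatial or further frequency factor — is what makes the bookkeeping delicate, but each individual ingredient is standard.
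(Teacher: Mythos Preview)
Your handling of the scalar piece and of the $\xi_1$-dependence (via bounded perturbation and Young's inequality) is fine and matches the paper. The gap is in how you extract the $-\mu_1|\xi_0|^2 t$ dissipation for real frequencies.

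\textbf{The gap.} Your two-regime split (spectral projection for small $|\xi_0|$, Hypothesis~\ref{hyp:spectrum_multid} for $|\xi_0|$ bounded away from zero) does not cover large $|\xi_0|$. Hypothesis~\ref{hyp:spectrum_multid}-\ref{it:spectrum_midhighfreq} gives only $\Real\sigma(\hat L_{\xi_0})\le-\theta_3$, which at best yields $\tnorm{\re^{t\hat L_{\xi_0}}}\lesssim \re^{-\theta_3 t/2}$. But the target bound demands $\re^{\kappa_1 t-\mu_1|\xi_0|^2 t}$ for \emph{all} $\xi_0$, and for $|\xi_0|\to\infty$ the right-hand side tends to zero arbitrarily fast; a fixed exponential $\re^{-\theta_3 t/2}$ is not ``more than enough'' --- it is strictly weaker. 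Moreover, your uniformity claim for the mid-range relies on a compactness argument (as in the paper's Lemma~\ref{lem:uniformsemigroup}), which only works on bounded sets of perturbations, so it cannot be pushed to $|\xi_0|\to\infty$ either.

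\textbf{What the paper does instead.} The $\re^{\kappa_1 t}$ factor in the statement is the hint: this lemma is meant to be crude and to use no spectral hypotheses at all. The paper takes as base operator not $L_0$ but $\cL_{\xi_0}=D\partial_{zz}+c\partial_z-|\xi_0|^2 D$, diagonalizes $D=JD_0J^{-1}$ so that $\cL_{\xi_0}$ decouples into scalar convective heat operators with explicit bound $\tnorm{\re^{t\cL_{\xi_0}}}\lesssim\re^{-\mu_0|\xi_0|^2 t}$, and then treats the $\xi_0$-\emph{independent} bounded operator $f'(\phi)$ as the perturbation. The bounded perturbation theorem then costs only $\re^{Ct}$, yielding $\tnorm{\re^{t\hat L_{\xi_0}}}\lesssim\re^{Ct-\mu_0|\xi_0|^2 t}$ for all $\xi_0$ at once --- no case splitting, no spectral projections, no Hypothesis~\ref{hyp:spectrum_multid}. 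The $\xi_1$-extension is then exactly as you describe. Your instinct to perturb around $L_0$ was the wrong anchor: perturb around the free operator $\cL_{\xi_0}$ instead, so that the perturbation $f'(\phi)$ is bounded uniformly in $\xi_0$.
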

\begin{proof}
    The estimate on $\re^{-t d_{\bot} \lambda_0(\xi_0 + \ri \xi_1)}$ is elementary, so we only estimate $\re^{t\hat{L}_{\xi_0 + \ri \xi_1}}$.
    We first consider the case where $\xi_1 = 0$.
    We introduce the operators 
    \begin{equation*}
        \cL_{\xi} = D \partial_{zz} + c\partial_z - \abs{\xi}^2 D
    \end{equation*}
    for $\xi \in \bbR^{d-1}$. Since $D$ is symmetric and positive-definite, there exists an invertible matrix $J \in \bbR^{n \times n}$ and a positive diagonal matrix $D_0 \in \bbR^{n \times n}$ such that $D = J D_0 J^{-1}$. Hence, using the classical bound 
    \begin{align*} \tnorm{\re^{t \left(\partial_{zz} + c \partial_z\right)}} \leq 1\end{align*} 
    on the scalar convective heat semigroup for $t \geq 0$, see e.g.~\cite[Proposition~3.6]{derijk_nonlinear_2024}, we find $\mu_0 > 0$ such that
    \begin{align*} 
        \tnorm{\re^{t\cL_{\xi_0}}} 
        = \tnorm{J\re^{t(D_0 \partial_{zz} + c \partial_z - \abs{\xi_0}^2 D_0)}J^{-1}}
        \lesssim \tnorm{\re^{-t \abs{\xi_0}^2 D_0}}
        \lesssim \re^{-\mu_0 \abs{\xi_0}^2 t}
    \end{align*}
    for all $\xi_0 \in \bbR^{d-1}$ and $t \geq 0$.
    Since $\hat{L}_{\xi_0} = \cL_{\xi_0} + f'(\phi)$ and $f'(\phi)$ is a bounded operator, it follows from the bounded perturbation theorem, cf.~\cite[Theorem III.1.3]{engel_oneparameter_2000}, that there exists $C > 0$ such that we have
    \begin{align}
        \label{eq:semigroup_allfreqreal}
        \tnorm{\re^{t\hat{L}_{\xi_0}}}\lesssim \re^{Ct -\mu_0 \abs{\xi_0}^2t}
    \end{align}
    for all $\xi_0 \in \bbR^{d-1}$ and $t \geq 0$.
    To extend to the case where $\xi_1 \neq 0$, we first observe that, by Young's inequality, we can estimate
    \begin{align}
        \label{eq:xi1absorbest}
        \tnorm{\hat{L}_{\xi_0 + \ri \xi_1} - \hat{L}_{\xi_0}} 
        \lesssim \abs{\xi_0}\abs{\xi_1} + \abs{\xi_1}^2 \lesssim \delta \abs{\xi_0}^2 + (1+\delta^{-1})\abs{\xi_1}^2
    \end{align}
    for all $\xi_0,\xi_1 \in \bbR^{d-1}$ and $\delta > 0$.
    Thus, applying the bounded perturbation theorem again and choosing $\delta > 0$ sufficiently small, we find a constant $\kappa > 0$ such that
    \begin{align*}
        \tnorm{\re^{t\hat{L}_{\xi_0 + \ri \xi_1}}} \lesssim \re^{Ct - \frac{\mu_0}{2}\abs{\xi_0}^2t +  \kappa \abs{\xi_1}^2}
    \end{align*}
    for all $\xi_0,\xi_1 \in \bbR^{d-1}$ and $t \geq 0$.
    Thus,~\eqref{eq:semigroup_allfreq} holds with $\mu_1 = \mu_0 / 2$ and $\kappa_1 = \max\cur{C,\kappa}$.
\end{proof}

In the next two lemmas, we establish low- and mid-high-frequency estimates on the semigroup $\smash{\re^{t\hat{L}_\xi}}$.

\begin{lemma}[Low-frequency bounds]
\label{lem:semigroup_lowfreq}
There exist $k, \mu_2,\kappa_2 >0$ such that we have
\begin{align}
    \label{eq:semigroup_projected}
    \tnorm{\re^{t\hat{L}_{\xi_0 + \ri \xi_1}}(I - P_{\xi_0 + \ri \xi_1})} 
    &\lesssim \re^{-\mu_2 t} \\
    \label{eq:semigroup_taylor}
    \tnorm{\re^{t\hat{L}_{\xi_0 + \ri \xi_1}}P_{\xi_0 + \ri\xi_1} - \re^{-td_{\bot}\lambda_0(\xi_0 + \ri \xi_1)}P_0}
    &\lesssim t^{-1} \re^{-\mu_2\abs{\xi_0}^2 t + \kappa_2 \abs{\xi_1}^2 t}
\end{align}
for all $t \geq 1$ and $\xi_0,\xi_1 \in \bbR^{d-1}$ satisfying $\max\cur{\abs{\xi_0},\abs{\xi_1}} \leq k$.
\end{lemma}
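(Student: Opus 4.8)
The plan is to prove the two estimates separately, after fixing the constant $k>0$ small enough that the closed polydisc $\{\xi_0+\ri\xi_1:\abs{\xi_0},\abs{\xi_1}\le k\}\subset\bbC^{d-1}$ is contained in the neighborhood $U$ from Proposition~\ref{prop:speccons}, and so that the finitely many smallness conditions appearing below are met. Note that on this polydisc the family $\{\hat L_\xi\}$ is uniformly sectorial, since $\hat L_\xi-L_0=-\lambda_0(\xi)D$ is a bounded perturbation of $L_0$ whose norm is $\lesssim\abs{\xi}^2\le 2k^2$.

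For the complementary bound~\eqref{eq:semigroup_projected}, I would first note that $P_\xi$ is the spectral projection onto the one-dimensional, simple $\lambda_{\mathrm c}(\xi)$-eigenspace of the sectorial operator $\hat L_\xi$, hence commutes with $\re^{t\hat L_\xi}$, and that the restriction of $\re^{t\hat L_\xi}$ to the closed invariant subspace $\mathrm{ran}(I-P_\xi)$ is the analytic semigroup generated by the part of $\hat L_\xi$ in that subspace. Its spectrum equals $\sigma(\hat L_\xi)\setminus\{\lambda_{\mathrm c}(\xi)\}$, which by Proposition~\ref{prop:speccons}(1) lies in $\{\Real\lambda<-\theta_1/2\}$. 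I would then extract, uniformly in $\xi$ over the compact polydisc, a bound $\tnorm{\re^{t\hat L_\xi}(I-P_\xi)}\lesssim\re^{-\mu_2 t}$ for $t\ge1$ and some fixed $\mu_2\in(0,\theta_1/2)$, from the Dunford-integral representation of the complementary semigroup along a contour chosen independently of $\xi$; this is the standard analytic-semigroup argument, and uniformity holds because $\{\hat L_\xi\}$ is uniformly sectorial and the spectral gap there is uniform. (One also uses that $\tnorm{I-P_\xi}$ is uniformly bounded, which follows from $\tnorm{P_\xi-P_0}\lesssim\abs{\xi}^2$.)

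For the projected bound~\eqref{eq:semigroup_taylor}, since $P_\xi$ has rank one and projects onto the $\lambda_{\mathrm c}(\xi)$-eigenspace (no nilpotent part), we have $\re^{t\hat L_\xi}P_\xi=\re^{t\lambda_{\mathrm c}(\xi)}P_\xi$, so I would decompose
\[
\re^{t\hat L_\xi}P_\xi-\re^{-td_\bot\lambda_0(\xi)}P_0
=\re^{t\lambda_{\mathrm c}(\xi)}\bigl(P_\xi-P_0\bigr)+\bigl(\re^{t\lambda_{\mathrm c}(\xi)}-\re^{-td_\bot\lambda_0(\xi)}\bigr)P_0 .
\]
Writing $\xi=\xi_0+\ri\xi_1$ so that $\Real\lambda_0(\xi)=\abs{\xi_0}^2-\abs{\xi_1}^2$ and $\abs{\xi}^4\le2k^2(\abs{\xi_0}^2+\abs{\xi_1}^2)$, I would feed in the three facts from Proposition~\ref{prop:speccons}(3) — $d_\bot>0$, $\tnorm{P_\xi-P_0}\lesssim\abs{\xi}^2$, and $\abs{\lambda_{\mathrm c}(\xi)+d_\bot\lambda_0(\xi)}\le C_0\abs{\xi}^4$ — the last two of which give $\Real\lambda_{\mathrm c}(\xi)\le-\tfrac12 d_\bot\abs{\xi_0}^2+2d_\bot\abs{\xi_1}^2$ after shrinking $k$. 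The first term is then $\lesssim(\abs{\xi_0}^2+\abs{\xi_1}^2)\re^{-\frac12 d_\bot\abs{\xi_0}^2 t+2d_\bot\abs{\xi_1}^2 t}$, and for the second I would use $\abs{\re^w-1}\le\abs{w}\re^{\abs{w}}$ with $w=t(\lambda_{\mathrm c}(\xi)+d_\bot\lambda_0(\xi))$, together with $\abs{w}\le C_0\abs{\xi}^4 t\le 2C_0k^2(\abs{\xi_0}^2+\abs{\xi_1}^2)t$ and $\abs{\re^{-td_\bot\lambda_0(\xi)}}=\re^{-d_\bot\abs{\xi_0}^2 t+d_\bot\abs{\xi_1}^2 t}$, to bound it by $\lesssim(\abs{\xi_0}^4+\abs{\xi_1}^4)t\,\re^{-(d_\bot-2C_0k^2)\abs{\xi_0}^2 t+(d_\bot+2C_0k^2)\abs{\xi_1}^2 t}$. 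Finally I would absorb the polynomial prefactors using the elementary inequalities $s\,\re^{-\eps s t}\lesssim_\eps t^{-1}$, $s\,t\,\re^{-\eps s t}\lesssim_\eps 1$, and $s^2 t\,\re^{-\eps s t}\lesssim_\eps t^{-1}$ (valid for $s\ge0$, $t\ge1$, $\eps>0$): in the $\xi_0$-direction this costs only a fraction of the decay exponent, while in the $\xi_1$-direction each prefactor is traded against an arbitrarily small increase of the growth exponent. After fixing $k$ small and choosing $\mu_2\in(0,\tfrac12 d_\bot)$ small (and, to use a single $\mu_2$ for both parts, also $\mu_2<\theta_1/2$) and $\kappa_2$ large, both right-hand sides are bounded by $t^{-1}\re^{-\mu_2\abs{\xi_0}^2 t+\kappa_2\abs{\xi_1}^2 t}$ for $t\ge1$.

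I do not anticipate a genuine obstacle: the lemma rests on standard analytic-semigroup perturbation theory together with elementary scalar estimates. The two points needing care are (i) the uniformity in $\xi$ of the complementary-semigroup bound, handled by compactness of the parameter polydisc and the uniform sectoriality of $\{\hat L_\xi\}$; and (ii) the bookkeeping in~\eqref{eq:semigroup_taylor}, where one must accept a constant $\kappa_2$ strictly larger than $d_\bot$ and a $\mu_2$ strictly smaller than $\tfrac12 d_\bot$ in order to swallow both the polynomial prefactors and the $\re^{\cO(k^2)\abs{\xi}^2 t}$ error factors — which is precisely why the statement leaves these constants unspecified.
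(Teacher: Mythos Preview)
Your proposal is correct and follows essentially the same route as the paper for~\eqref{eq:semigroup_taylor}: the paper also writes $\re^{t\hat L_\xi}P_\xi=\re^{t\lambda_{\mathrm c}(\xi)}P_\xi$, uses the Taylor bounds from Proposition~\ref{prop:speccons}(3), the inequality $\abs{\re^w-1}\le\abs{w}\re^{\abs w}$, and then absorbs the polynomial prefactors into the exponentials after shrinking $k$. The only cosmetic difference is the grouping of terms: the paper factors out $\re^{-td_\bot\lambda_0(\xi)}$ first, writing
\[
\re^{-td_\bot\lambda_0(\xi)}\bigl[(\re^{t\lambda_{\mathrm c}(\xi)+td_\bot\lambda_0(\xi)}-1)P_\xi+(P_\xi-P_0)\bigr],
\]
which avoids having to separately bound $\Real\lambda_{\mathrm c}(\xi)$ as you do for your first term; either way the arithmetic closes with $\mu_2=d_\bot/4$, $\kappa_2=2d_\bot$.

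For~\eqref{eq:semigroup_projected} there is a genuine (minor) difference in tactic. The paper does not argue via a uniform Dunford contour; instead it invokes, for each fixed $\xi$, that the analytic semigroup has growth bound equal to spectral bound, giving a $\xi$-dependent estimate $\tnorm{\re^{t\hat L_\xi}(I-P_\xi)}\le M_\xi\re^{-(\theta_1/4)t}$, and then upgrades this to a uniform bound (at the cost of halving the decay rate) via an appendix lemma (Lemma~\ref{lem:uniformsemigroup}) based on compactness of the parameter set together with the bounded perturbation theorem. Your contour argument is a perfectly valid alternative and arguably more self-contained; the paper's approach is softer and reusable (the same lemma is used again for the mid-frequency bounds).
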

\begin{proof}
    Let $U \subset \bbC^{d-1}$ be as in Proposition~\ref{prop:speccons} and choose $k > 0$ such that $V \coloneq \cur{\xi_0 + \ri \xi_1 : \max\cur{\abs{\xi_0},\abs{\xi_1}} \leq k} \subset U$.
    Since $\smash{\re^{t\hat{L}_{\xi}}}$ is an analytic semigroup for every $\xi \in \bbC^{d-1}$, its spectral bound equals its growth bound by~\cite[Corollary 3.12]{engel_oneparameter_2000}.
    Thus, it follows from Proposition~\ref{prop:speccons} that for all $\xi \in V$ there exists $M_\xi > 0$ such that
    \begin{align*}
        \tnorm{\re^{t\hat{L}_{\xi}}(I - P_{\xi})} \leq M_{\xi} \re^{-\frac{\theta_1}{4} t}.
    \end{align*}
    Hence, applying Lemma~\ref{lem:uniformsemigroup}, we obtain the uniform bound~\eqref{eq:semigroup_projected} at the cost of replacing $\theta_1/4$ by $\mu_2 = \theta_1/8$.
    
    For the second estimate, we first rewrite
    \begin{align*}
        \re^{t\hat{L}_{\xi}}P_{\xi} - \re^{-td_{\bot}\lambda_0(\xi)}P_0 
        &= \re^{t\lambda_{\mathrm{c}}(\xi)}P_{\xi} - \re^{-t d_{\bot}\lambda_0(\xi)}P_0\\ 
        &= \re^{-t d_{\bot}\lambda_0(\xi)} ((\re^{t\lambda_{\mathrm{c}}(\xi) + t d_{\bot} \lambda_0(\xi)} - 1)P_{\xi} + (P_{\xi} - P_0))
    \end{align*}
    for $\xi \in V$ and $t \geq 1$.
    Thus, using the inequality $\abs{\re^{w} - 1} \leq \re^{\abs{w}} - 1 \leq \abs{w}\re^{\abs{w}}$ for $w \in \bbC$ and invoking the Taylor expansions from Proposition~\ref{prop:speccons}, we find
    \begin{align*}
        \tnorm{\re^{t\hat{L}_{\xi}}P_{\xi} - \re^{-td_{\bot}\lambda_0(\xi)}P_0} 
        &\lesssim (\abs{\xi}^2 + t\abs{\xi}^4) \re^{- d_{\bot}\abs{\Real(\xi)}^2 t +  d_{\bot}\abs{\Imag(\xi)}^2 t + C_0\abs{\xi}^4 t}
    \end{align*}
    for all $\xi \in V$ and $t \geq 1$.     
    By additionally imposing $k \leq \sqrt{d_{\bot}/(4C_0)}$, we can bound
    \begin{align*}
        \tnorm{\re^{t\hat{L}_{\xi}}P_{\xi} - \re^{-td_{\bot}\lambda_0(\xi)}P_0}
        &\lesssim t^{-1} (t\abs{\xi}^2 + t^2\abs{\xi}^4) \re^{- \frac{1}{2} d_{\bot}\abs{\Real(\xi)}^2 t +  
        \frac{3}{2}d_{\bot}\abs{\Imag(\xi)}^2 t} \\
        &\lesssim t^{-1} \re^{-\frac14 d_{\bot}\abs{\Real(\xi)}^2 t + 2d_{\bot} \abs{\Imag(\xi)}^2t}
    \end{align*}
  for all $\xi \in V$ and $t \geq 1$, which shows that~\eqref{eq:semigroup_taylor} holds with $\mu_2 = d_{\bot}/4$ and $\kappa_2 = 2 d_{\bot}$.
\end{proof}

\begin{lemma}[Mid- and high-frequency bounds]
\label{lem:semigroup_midhighfreq}
Let $k > 0$ be as in Lemma~\ref{lem:semigroup_lowfreq}. 
There exist $\mu_3,\kappa_3 > 0$ such that
\begin{align}
    \label{eq:semigroup_midhighfreq}
    \tnorm{\re^{t\hat{L}_{\xi_0 + \ri \xi_1}}} + \abs{\re^{-td_{\bot}\lambda_0(\xi_0 + \ri \xi_1)}} \lesssim \re^{-\mu_3 t  - \mu_3\abs{\xi_0}^2 t + \kappa_3\abs{\xi_1}^2 t}
\end{align}
for all $t \geq 0$ and $\xi_0,\xi_1 \in \bbR^{d-1}$ with $\abs{\xi_0} \geq k$.
\end{lemma}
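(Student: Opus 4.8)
The plan is to bound the two summands in~\eqref{eq:semigroup_midhighfreq} separately, and to handle the semigroup term by first treating purely real Fourier frequencies $\xi_0$ with $\abs{\xi_0} \ge k$ and then extending to complex frequencies by a bounded-perturbation argument, mirroring the proof of Lemma~\ref{lem:semigroup_allfreq}. The scalar term is immediate: since $\Real \lambda_0(\xi_0 + \ri\xi_1) = \abs{\xi_0}^2 - \abs{\xi_1}^2$ and $d_\bot > 0$ by Proposition~\ref{prop:speccons}, we have $\abs{\re^{-td_\bot\lambda_0(\xi_0+\ri\xi_1)}} = \re^{-td_\bot(\abs{\xi_0}^2 - \abs{\xi_1}^2)}$, and splitting $d_\bot\abs{\xi_0}^2 \ge \tfrac12 d_\bot\abs{\xi_0}^2 + \tfrac12 d_\bot k^2$ produces a bound of the claimed form with any $\mu_3 \le \tfrac12 d_\bot\min\{1,k^2\}$ and $\kappa_3 \ge d_\bot$.

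For the real frequencies I would split off a high-frequency regime $\abs{\xi_0} \ge K$ and a compact mid-frequency annulus $k \le \abs{\xi_0} \le K$, for a large threshold $K$ fixed below. In the high-frequency regime the estimate~\eqref{eq:semigroup_allfreqreal} obtained in the proof of Lemma~\ref{lem:semigroup_allfreq} gives $\tnorm{\re^{t\hat{L}_{\xi_0}}} \lesssim \re^{Ct - \mu_0\abs{\xi_0}^2 t}$; taking $K$ large enough that $\mu_0 K^2 \ge 4C$ lets one absorb the growth factor $\re^{Ct}$ into half of $\re^{-\mu_0\abs{\xi_0}^2 t}$, yielding $\tnorm{\re^{t\hat{L}_{\xi_0}}} \lesssim \re^{-\beta t - \beta\abs{\xi_0}^2 t}$ for $\abs{\xi_0} \ge K$ and some $\beta > 0$. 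On the mid-frequency annulus, Hypothesis~\ref{hyp:spectrum_multid} gives $\Real\sigma(\hat{L}_{\xi_0}) \le -\theta$ for all $k \le \abs{\xi_0} \le K$, with $\theta := \min\{\theta_2 k^2, \theta_3\} > 0$, invoking Hypothesis~\ref{hyp:spectrum_multid}-\ref{it:spectrum_lowfreq} on the subset where $\abs{\xi_0} \le k_0$ and Hypothesis~\ref{hyp:spectrum_multid}-\ref{it:spectrum_midhighfreq} on the subset where $\abs{\xi_0} \ge k_0$. Since $\hat{L}_{\xi_0}$ is sectorial, its spectral bound equals its growth bound, and since $\xi_0 \mapsto \hat{L}_{\xi_0}$ is an analytic family on the compact annulus, Lemma~\ref{lem:uniformsemigroup} furnishes the uniform bound $\tnorm{\re^{t\hat{L}_{\xi_0}}} \lesssim \re^{-\theta t/2}$, just as in the proof of Lemma~\ref{lem:semigroup_lowfreq}. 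Because $\abs{\xi_0} \le K$ on the annulus, this rewrites as $\tnorm{\re^{t\hat{L}_{\xi_0}}} \lesssim \re^{-\beta' t - \beta'\abs{\xi_0}^2 t}$ for a possibly smaller $\beta' > 0$. Combining the two regimes gives $\tnorm{\re^{t\hat{L}_{\xi_0}}} \lesssim \re^{-\gamma t - \gamma\abs{\xi_0}^2 t}$ for all real $\abs{\xi_0} \ge k$ and $t \ge 0$, with $\gamma := \min\{\beta,\beta'\}$.

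Passing to complex frequencies then proceeds exactly as in~\eqref{eq:xi1absorbest}: the difference $\hat{L}_{\xi_0 + \ri\xi_1} - \hat{L}_{\xi_0} = D\bra[\big]{\lambda_0(\xi_0) - \lambda_0(\xi_0 + \ri\xi_1)}$ is bounded with $\tnorm{\hat{L}_{\xi_0 + \ri\xi_1} - \hat{L}_{\xi_0}} \lesssim \abs{\xi_0}\abs{\xi_1} + \abs{\xi_1}^2 \le \delta\abs{\xi_0}^2 + (1+\delta^{-1})\abs{\xi_1}^2$ for every $\delta > 0$, so the bounded-perturbation theorem with $\delta$ chosen small enough that the $\delta\abs{\xi_0}^2$ contribution absorbs at most half of $\gamma\abs{\xi_0}^2$ yields $\tnorm{\re^{t\hat{L}_{\xi_0 + \ri\xi_1}}} \lesssim \re^{-\gamma t - \frac{\gamma}{2}\abs{\xi_0}^2 t + \kappa\abs{\xi_1}^2 t}$ for all $\abs{\xi_0} \ge k$, $\xi_1 \in \bbR^{d-1}$, and some $\kappa > 0$. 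Combining this with the first paragraph and taking $\mu_3 := \min\{\tfrac{\gamma}{2}, \tfrac12 d_\bot, \tfrac12 d_\bot k^2\}$ and $\kappa_3 := \max\{\kappa, d_\bot\}$ establishes~\eqref{eq:semigroup_midhighfreq}.

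The only ingredient that is not a routine perturbative estimate is the passage from the spectral bound of Hypothesis~\ref{hyp:spectrum_multid} to a semigroup bound that is uniform over the mid-frequency annulus; this is where analyticity of the family $\{\hat{L}_{\xi_0}\}$ together with the auxiliary uniform-semigroup result do the work, exactly as was already needed in the proof of Lemma~\ref{lem:semigroup_lowfreq}, and I anticipate no further obstacle.
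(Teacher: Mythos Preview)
Your proposal is correct and follows essentially the same approach as the paper: treat the scalar term directly, split real frequencies $\abs{\xi_0} \ge k$ into a high-frequency regime (handled via~\eqref{eq:semigroup_allfreqreal}) and a compact mid-frequency annulus (handled via Hypothesis~\ref{hyp:spectrum_multid}, the equality of spectral and growth bounds for analytic semigroups, and Lemma~\ref{lem:uniformsemigroup}), and then extend to complex frequencies by the bounded-perturbation argument of~\eqref{eq:xi1absorbest}. The paper's proof differs only in cosmetic details (the specific threshold $k_1$ and constants chosen).
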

\begin{proof}
    The estimate for $\re^{-td_{\bot}\lambda_0(\xi)}$ is elementary, so we only treat $\re^{t\hat{L}_{\xi}}$.
    We first consider the case $\xi_1 = 0$.
    Letting $\mu_0, C > 0$ be as in~\eqref{eq:semigroup_allfreqreal}, we introduce $k_1 = \smash{\sqrt{2 C \mu_0^{-1} + 1}}$. 
    It then follows from~\eqref{eq:semigroup_allfreqreal} that we have
    \begin{align}  \label{eq:highfreq}
        \tnorm{\re^{t\hat{L}_{\xi_0}}} \lesssim \re^{-\frac{\mu_0}{2} t - \frac{\mu_0}{2}\abs{\xi_0}^2 t}
    \end{align}
    for all $t \geq 0$ and $\xi_0 \in \bbR^{d-1}$ with $\abs{\xi_0} \geq k_1$.    
    
    On the other hand, since the spectral bound of the analytic semigroup $\smash{\re^{t\hat{L}_\xi}}$ equals its growth bound by~\cite[Corollary 3.12]{engel_oneparameter_2000}, we deduce with the aid of Hypothesis~\ref{hyp:spectrum_multid} and Lemma~\ref{lem:uniformsemigroup} that
    \begin{align*}
        \tnorm{\re^{t\hat{L}_{\xi_0}}} \lesssim  \re^{-\frac{1}{2}\min\cur{\theta_2k^2,\theta_3} t}
    \end{align*}
    for all $t \geq 0$ and $\xi_0 \in \bbR^{d-1}$ with $k \leq \abs{\xi_0} \leq k_1$.    
    Setting $\mu = \frac{1}{2}\min\cur{\theta_2 k^2,\theta_3}$, we infer
    \begin{align} \label{eq:midfreq}
        \tnorm{\re^{t\hat{L}_{\xi_0}}} \lesssim  \re^{-\frac{\mu}{2} t - \frac{\mu}{2 k_1^2} \abs{\xi_0}^2 t}
    \end{align}
    for all $t \geq 0$ and $\xi_0 \in \bbR^{d-1}$ with $k \leq \abs{\xi_0} \leq k_1$.   
    
    Combining the high- and mid-frequency estimates~\eqref{eq:highfreq} and~\eqref{eq:midfreq} and setting $\tilde{\mu} = \min\cur{\mu_0 / 2, \mu / (2 k_1^2)}$ (note that $k_1 \geq 1$), then yields
    \begin{align*}
        \tnorm{\re^{t\hat{L}_{\xi_0}}} \lesssim \re^{-\tilde{\mu} t - \tilde{\mu} \abs{\xi_0}^2 t}
    \end{align*}
    for all $t \geq 0$ and $\xi_0 \in \bbR^{d-1}$ with $\abs{\xi_0} \geq k$.   
    In the same way as in Lemma~\ref{lem:semigroup_allfreq}, we finally treat $\smash{\hat{L}_{\xi_0 + \ri \xi_1} - \hat{L}_{\xi_0}}$ as a bounded perturbation, which gives us a constant $\kappa_3 > 0$ such that
    \begin{equation*}
        \tnorm{\re^{t\hat{L}_{\xi_0 + \ri \xi_1}}} \lesssim \re^{-\tilde{\mu} t - \frac{\tilde \mu}{2} \abs{\xi_0}^2 t + \kappa_3 \abs{\xi_1}^2 t}
    \end{equation*}
    for all $t \geq 0$, $\xi_1 \in \bbR^{d-1}$, and $\xi_0 \in \bbR^{d-1}$ with $\abs{\xi_0} \geq k$.
    Thus,~\eqref{eq:semigroup_midhighfreq} holds with $\mu_3 = \tilde{\mu} / 2$.
\end{proof}

\section{Linear theory: semigroup decomposition and estimates} \label{sec:linear}

The linearization $L$ of~\eqref{eq:rde_co} about the planar front $\phi$ is a densely defined, sectorial operator on $\Cub(\bbR^d)$; see, for instance, \cite[Corollary~3.1.9]{Lunardi_analytic_2013}. Consequently, $L$ generates an analytic semigroup $\re^{tL}$.  
Since $L\phi' = 0$, one cannot expect $\re^{tL}$ to exhibit decay in time.  
Nevertheless, Hypotheses~\ref{hyp:spectrum_1d} and~\ref{hyp:spectrum_multid} suggest that the leading-order behavior of $\re^{tL}$ is closely related to the translational mode $\phi'$.
The main result of this section confirms this expectation.

\begin{theorem}[Semigroup decomposition]
    \label{thm:decomposition}
    Let $d_{\bot} > 0$ be as in Proposition~\ref{prop:speccons}. We have
	\begin{equation} \label{eq:spec_decomp_theo}
		\re^{tL}v = \re^{t d_{\bot} \Delta_y} \langle e^*, v\rangle_2 \phi' + S(t)v,
	\end{equation}
    where $\Delta_y$ denotes the Laplacian in the transverse $y$-direction, $e^*$ is as in Section~\ref{subsec:assumptions}, and the remainder $S(t) \colon C_{\mathrm{ub}}(\bbR^d) \to \Cub(\bbR^d)$ satisfies
	\begin{align}    \label{eq:linearest}
   		\norm{S(t)v}_{\infty} &\lesssim (1+t)^{-1} \norm{v}_{\infty}
	\end{align}
    for all $v \in \Cub(\bbR^d)$ and $t \geq 0$.
\end{theorem}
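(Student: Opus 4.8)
The plan is to pass to the transverse Fourier variable, decompose the Fourier symbol $\re^{t\hat L_\xi}$ of $\re^{tL}$ according to the spectral splitting established in Section~\ref{sec:spectral}, and convert the resulting symbol bounds into $L^{\infty}$-operator bounds via kernel estimates obtained by analytically continuing the frequency. (The naive factorization $\re^{tL}=\re^{tD\Delta_y}\re^{tL_0}$ does not suffice: $\re^{tD\Delta_y}$ mixes $\phi'$ with the other eigendirections of $D$ and produces only an $\cO(1)$ error; it is the spectral projection $P_\xi$, carrying the \emph{scalar} effective diffusivity $d_\bot$, that isolates the slow mode.) \textbf{Step 1 (Fourier representation).} Since $L = D\Delta + c\partial_z + f'(\phi)$ commutes with translations in $y$, the semigroup acts for $t>0$ as convolution in $y$ against an operator-valued kernel: $(\re^{tL}v)(y,\cdot) = \int_{\bbR^{d-1}} \mathcal{G}(t,y-\tilde y)[v(\tilde y,\cdot)]\rd\tilde y$, with $\mathcal{G}(t,w) = (2\pi)^{-(d-1)}\int_{\bbR^{d-1}} \re^{\ri w\cdot\xi}\re^{t\hat L_\xi}\rd\xi \in \mathcal{B}(\Cub(\bbR))$, the integral converging by the Gaussian-in-$\xi$ decay of Lemma~\ref{lem:semigroup_allfreq}. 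That this integral operator genuinely equals $\re^{tL}$ is checked by verifying that it solves the Cauchy problem for $\partial_t u = Lu$ on a dense subspace (e.g.\ $L^2(\bbR^d)\cap\Cub(\bbR^d)$, where the ordinary Fourier transform applies) and extending by density and strong continuity at $t=0$; likewise $\re^{td_\bot\Delta_y}\langle e^*,v\rangle_2\phi'$ is the transverse Fourier inverse of $\re^{-td_\bot\lambda_0(\xi)}P_0\hat v(\xi)$, using $P_0 w = \langle e^*,w\rangle_2\phi'$ and that $\re^{-td_\bot\abs\xi^2}$ is the symbol of $\re^{td_\bot\Delta_y}$. The upshot is that $\norm{S(t)v}_\infty \leq \big(\int_{\bbR^{d-1}}\tnorm{\mathcal{G}_S(t,w)}\rd w\big)\norm{v}_\infty$ for all $v$, so it suffices to bound the $L^1_w$-norm of the (operator-valued) kernel of $S(t)$.

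\textbf{Step 2 (symbol decomposition).} Fix $k>0$ as in Lemma~\ref{lem:semigroup_lowfreq} and a smooth cutoff $\hat\chi$ with $\hat\chi\equiv1$ on $\abs\xi\leq k/4$ and $\supp\hat\chi\subset\cur{\abs\xi\leq k}$, so $\supp\hat\chi$ lies in the neighborhood $U$ of Proposition~\ref{prop:speccons}. Writing $\re^{t\hat L_\xi} = \re^{t\hat L_\xi}P_\xi + \re^{t\hat L_\xi}(I-P_\xi)$ on $\supp\hat\chi$ and inserting $1 = \hat\chi + (1-\hat\chi)$ into the symbol $\re^{t\hat L_\xi} - \re^{-td_\bot\lambda_0(\xi)}P_0$ of $S(t)$, we get $S(t) = S_1(t) + S_2(t) + S_3(t) - S_4(t)$, where the kernels $\mathcal{G}_1,\mathcal{G}_2,\mathcal{G}_3,\mathcal{G}_4$ of $S_1,S_2,S_3,S_4$ are the transverse Fourier inverses of the symbols $\hat\chi(\xi)\big(\re^{t\hat L_\xi}P_\xi - \re^{-td_\bot\lambda_0(\xi)}P_0\big)$, $\hat\chi(\xi)\re^{t\hat L_\xi}(I-P_\xi)$, $(1-\hat\chi(\xi))\re^{t\hat L_\xi}$, and $(1-\hat\chi(\xi))\re^{-td_\bot\lambda_0(\xi)}P_0$, respectively. \textbf{Step 3 (exponentially small contributions).} For $S_2$, estimate~\eqref{eq:semigroup_projected} gives $\tnorm{\re^{t\hat L_\xi}(I-P_\xi)}\lesssim\re^{-\mu_2 t}$ on $\supp\hat\chi$; for $S_3$, Lemma~\ref{lem:semigroup_midhighfreq} on $\abs\xi\geq k$ together with $\Real\lambda_{\mathrm{c}}(\xi)<0$ and~\eqref{eq:semigroup_projected} on $k/4\leq\abs\xi\leq k$ give $\tnorm{\re^{t\hat L_\xi}}\lesssim\re^{-\mu t}$ on $\supp(1-\hat\chi)$; for $S_4$, $\re^{-td_\bot\abs\xi^2}\leq\re^{-td_\bot k^2/16}$ on $\supp(1-\hat\chi)$. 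In each case the symbol is exponentially small in $t$ and either compactly supported in $\xi$ or Gaussian-decaying in $\xi$ at infinity, while differentiating it in $\xi$ costs at most polynomial powers of $t$ (for $\partial_\xi\re^{t\hat L_\xi}$ use Duhamel's formula and that $\partial_\xi\hat L_\xi$ is bounded), which the exponential absorbs. Hence $\int_{\bbR^{d-1}}\tnorm{\mathcal{G}_j(t,w)}\rd w\lesssim\re^{-ct}$ for $j=2,3,4$ and $t\geq1$, and $\lesssim1$ for $0\leq t\leq1$; thus $\norm{S_j(t)v}_\infty\lesssim(1+t)^{-1}\norm{v}_\infty$.

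\textbf{Step 4 (the critical term $S_1$).} Here the sharp rate $t^{-1}$ must be produced, and the analyticity of the low-frequency data is used. On $\abs\xi\leq k/4$, where $\hat\chi\equiv1$, the symbol $\re^{t\hat L_\xi}P_\xi - \re^{-td_\bot\lambda_0(\xi)}P_0 = \re^{t\lambda_{\mathrm{c}}(\xi)}P_\xi - \re^{-td_\bot\lambda_0(\xi)}P_0$ extends holomorphically in $\xi$ by Proposition~\ref{prop:speccons}. Following the contour-deformation idea of~\cite{Hoff_pointwise_2002}, we shift the contour $\xi=\xi_0\mapsto\xi_0+\ri\xi_1$ with $\xi_1$ pointing along $w$ and of magnitude $\sim\min\cur{\abs w/t,\ \text{const}}$, closing up by short vertical segments at $\abs{\xi_0}=k/4$ (where the integrand is exponentially small in $t$ by~\eqref{eq:semigroup_taylor}) and estimating the leftover real-axis piece $k/4\leq\abs{\xi_0}\leq k$ crudely (again exponentially small in $t$). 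On the shifted contour the growth factor $\re^{\kappa_2\abs{\xi_1}^2 t}$ in~\eqref{eq:semigroup_taylor} is dominated by the oscillation gain $\abs{\re^{\ri w\cdot(\xi_0+\ri\xi_1)}}=\re^{-w\cdot\xi_1}$, while $t^{-1}\re^{-\mu_2\abs{\xi_0}^2 t}$ integrates in $\xi_0$ to $t^{-1}\cdot t^{-(d-1)/2}$; balancing the on-diagonal regime $\abs w\lesssim t$ against the off-diagonal regime $\abs w\gtrsim t$ yields $\tnorm{\mathcal{G}_1(t,w)}\lesssim t^{-1-(d-1)/2}\big(\re^{-\delta\abs w^2/t} + \re^{-\delta\abs w}\big)$ for $t\geq1$, whence $\int_{\bbR^{d-1}}\tnorm{\mathcal{G}_1(t,w)}\rd w\lesssim t^{-1}$. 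For $0\leq t\leq1$ the bound $\int_{\bbR^{d-1}}\tnorm{\mathcal{G}_1(t,w)}\rd w\lesssim1$ is immediate from compact support of $\hat\chi$, continuity of the symbol, and its $\xi$-derivative bounds. Thus $\norm{S_1(t)v}_\infty\lesssim(1+t)^{-1}\norm{v}_\infty$.

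Summing the four contributions yields~\eqref{eq:linearest}, and~\eqref{eq:spec_decomp_theo} holds by the very definition of $S(t)$ (the Step~2 decomposition being an operator identity on the dense subspace, hence everywhere by continuity). The main obstacle is Step~4: a naive $L^\infty_\xi\to L^1_w$ passage loses a factor $t^{(d-1)/2}$, so extracting the optimal $t^{-1}$ decay forces one to exploit the precise Gaussian structure of~\eqref{eq:semigroup_taylor} through the Hoff-type analytic continuation, with careful bookkeeping of the on- versus off-diagonal regimes in $w/\sqrt{t}$ and of the decay/growth budget in frequency, space, and time. A secondary technical point is the rigorous justification, in Step~1, of the transverse-Fourier kernel representation of $\re^{tL}$ on $\Cub(\bbR^d)$, which is not a space on which the Fourier transform acts boundedly.
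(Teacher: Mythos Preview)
Your overall strategy---Fourier representation, spectral splitting, and contour shift in the spirit of~\cite{Hoff_pointwise_2002}---matches the paper's, and your Step~4 is essentially Propositions~\ref{prop:offdiagonal}--\ref{prop:ondiagonal}. There is, however, a genuine technical snag in your execution: the smooth cutoff $\hat\chi$ you introduce in Step~2 is compactly supported, hence \emph{not analytic}, and this obstructs the contour deformation in Step~4. You cannot shift $\xi\mapsto\xi_0+\ri\xi_1$ through the full symbol $\hat\chi(\xi)\bigl(\re^{t\lambda_{\mathrm{c}}(\xi)}P_\xi-\re^{-td_\bot\lambda_0(\xi)}P_0\bigr)$; and your workaround of ``closing up by short vertical segments at $\abs{\xi_0}=k/4$'' is one-dimensional language that does not translate cleanly to $\bbR^{d-1}$ with $d\geq3$. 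Even in $d=2$, the connector segments sit at fixed $\abs{\xi_0}$ and run over $s\in[0,1]$ in the imaginary direction: near $s=0$ they carry no spatial gain $\re^{-s\,w\cdot\xi_1}$, so their contribution to the kernel is merely $\cO(\re^{-ct})$ \emph{uniformly in $w$}, which is not in $L^1_w(\bbR^{d-1})$. The same issue afflicts the ``leftover real-axis piece $k/4\leq\abs{\xi_0}\leq k$'': exponential smallness of the symbol in $t$ gives $L^\infty_w$-smallness of its kernel, not $L^1_w$-integrability.

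The paper avoids this entirely by dispensing with the cutoff. It shifts the \emph{full} integral $\int_{\bbR^{d-1}}\re^{\ri w\cdot\xi}\hat S_\xi(t)\rd\xi$ to $\int_{\bbR^{d-1}}\re^{\ri w\cdot(\xi_0+\ri\eps w)}\hat S_{\xi_0+\ri\eps w}(t)\rd\xi_0$, justified coordinate-by-coordinate via Cauchy's theorem, with boundary terms at $\abs{\xi_0}\to\infty$ vanishing by the all-frequency Gaussian bound of Lemma~\ref{lem:semigroup_allfreq}. Only \emph{after} the shift does it split into on/off-diagonal regimes in $\abs{w}/t$ and, within the on-diagonal regime, into low- versus mid-high-frequency pieces (the $T_1,T_2,T_3$ of Proposition~\ref{prop:ondiagonal})---all on the already-shifted contour, where the Gaussian spatial weight $\re^{-\eps\abs{w}^2/t}$ is already in hand. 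Your $\xi$-derivative route in Step~3 for $S_2,S_3,S_4$ is a legitimate alternative for those exponentially damped pieces, but the simplest fix for Step~4 is to drop $\hat\chi$ and shift first, as the paper does.
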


Theorem~\ref{thm:decomposition} shows that the principal part of the semigroup $\re^{tL}$ is $\re^{t d_{\perp}\Delta_y} P_0$, where $P_0$ is given by~\eqref{eq:defP0}. 
In the nonlinear analysis in Section~\ref{sec:nonlinearstab}, we will estimate this term by appealing to classical bounds for the heat semigroup, whose smoothing effect yields decay of derivatives; see, e.g.,~\cite[p.~3537]{matano_large_2011}.

\begin{lemma}[Heat semigroup bounds]
    \label{lem:heat}
    It holds
    \begin{align}
        \label{eq:heat1}
        \norm{\re^{t d_{\bot}\Delta_y} v}_{\infty} + \sqrt{t} \, \norm{\nabla \re^{t d_{\bot}\Delta_y} v}_{\infty} + t\, \norm{\Delta_y \re^{t d_{\bot}\Delta_y} v}_{\infty} &\lesssim \norm{v}_{\infty}
    \end{align}
    for all $v \in \Cub(\bbR^{d-1})$ and $t \geq 0$.
\end{lemma}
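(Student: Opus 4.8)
The plan is to prove the three bounds in~\eqref{eq:heat1} by explicit estimates on the heat kernel. Recall that $\re^{td_\bot\Delta_y}$ acts on $\Cub(\bbR^{d-1})$ by convolution with the Gaussian kernel
\begin{align*}
G_t(y) = \frac{1}{(4\pi d_\bot t)^{(d-1)/2}} \exp\!\bra[\bigg]{-\frac{\abs{y}^2}{4 d_\bot t}}, \qquad t > 0,
\end{align*}
which satisfies $\int_{\bbR^{d-1}} G_t(y)\rd y = 1$ for every $t > 0$, and with $\re^{0\cdot d_\bot\Delta_y} = I$ by convention. The bound $\norm{\re^{td_\bot\Delta_y}v}_\infty \leq \norm{v}_\infty$ is then immediate from Young's convolution inequality (or directly, since $G_t \geq 0$ and integrates to $1$), for all $t \geq 0$.

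For the derivative bounds, I would first observe that for $t > 0$ one has $\nabla \re^{td_\bot\Delta_y}v = (\nabla G_t) * v$ and $\Delta_y \re^{td_\bot\Delta_y}v = (\Delta_y G_t) * v$, so that
\begin{align*}
\norm{\nabla \re^{td_\bot\Delta_y}v}_\infty \leq \norm{\nabla G_t}_{L^1} \norm{v}_\infty, \qquad
\norm{\Delta_y \re^{td_\bot\Delta_y}v}_\infty \leq \norm{\Delta_y G_t}_{L^1}\norm{v}_\infty.
\end{align*}
It then remains to compute, by the scaling $G_t(y) = t^{-(d-1)/2} G_1(y/\sqrt{t}\,)$ (up to the fixed constant $d_\bot$), that $\norm{\nabla G_t}_{L^1} \lesssim t^{-1/2}$ and $\norm{\Delta_y G_t}_{L^1} \lesssim t^{-1}$ for all $t > 0$; the implicit constants depend only on $d$ and $d_\bot$ and are finite since $\nabla G_1, \Delta_y G_1 \in L^1(\bbR^{d-1})$ (being Schwartz functions). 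Multiplying through by $\sqrt{t}$ and $t$ respectively gives the stated bounds for $t > 0$, and for $t = 0$ the two derivative terms are simply absent on the left-hand side (the factors $\sqrt{t}$ and $t$ vanish), so~\eqref{eq:heat1} holds trivially. Combining the three pieces yields the claim for all $t \geq 0$.

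There is no real obstacle here; the only mild point of care is the behaviour near $t = 0$, where the derivative norms blow up but are compensated exactly by the prefactors $\sqrt{t}$ and $t$, so that the inequality is understood in the form stated rather than termwise. One could alternatively phrase the argument via the standard analytic-semigroup smoothing estimate $\norm{(-\Delta_y)^{\alpha}\re^{td_\bot\Delta_y}}_{\mathcal{B}(\Cub(\bbR^{d-1}))} \lesssim t^{-\alpha}$ for $\alpha \in \{1/2,1\}$, cf.~\cite[Chapter~2]{Lunardi_analytic_2013}, but the direct kernel computation is self-contained and elementary, so I would present that.
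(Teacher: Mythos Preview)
Your proof is correct and self-contained; the paper does not actually supply a proof of this lemma but merely cites it as a classical heat-semigroup bound (referencing~\cite[p.~3537]{matano_large_2011}). Your direct kernel computation is the standard elementary argument and is entirely appropriate here.
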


In the one-dimensional case, a theorem analogous to Theorem~\ref{thm:decomposition} can be proven by projecting out the translational mode using the spectral projection associated with the isolated eigenvalue $0 \in \sigma(L_0)$.
By Hypothesis~\ref{hyp:spectrum_1d}, this opens up a spectral gap so that the remainder decays exponentially.
In the multidimensional setting however, the eigenvalue $0 \in \sigma(L)$ is embedded in the continuous spectrum, cf.~\eqref{eq:lambdac}, so that a spectral projection is not even well-defined.
This fundamental obstruction makes the analysis much more involved and explains why the decay rate in~\eqref{eq:linearest} is only algebraic (which is nevertheless expected to be optimal).

Outside of the special case considered in~\cite{kapitula_multidimensional_1997}, see Remark~\ref{rem:Didentity} below, the semigroup $\re^{tL}$ is in general not expected to factorize into a longitudinal and a transversal part.
As a result, the proof of Theorem~\ref{thm:decomposition} is substantially more intricate than the linear stability analysis of bistable planar fronts in~\cite{kapitula_multidimensional_1997}. 
A further challenge arises from our objective of establishing pure $L^\infty$-bounds, which, in contrast to the setting of integrable perturbations considered in~\cite{kapitula_multidimensional_1997}, complicates the use of Fourier methods.

\begin{remark}
    \label{rem:Didentity}
    The emergence of $\re^{t d_{\bot}\Delta_y}$ in~\eqref{eq:spec_decomp_theo} can be illuminated by considering the case where $D = I$ as treated in~\cite{kapitula_multidimensional_1997}.
    Here, $L$ decomposes as $L = L_0 + \Delta_y$, where $\Delta_y$ commutes with $L_0$ (where both are considered as operators on $\Cub(\bbR^d)$).
    Consequently, the semigroup factorizes into a longitudinal and a transverse part as
    \begin{equation*}
        \re^{tL} = \re^{t\Delta_y}\re^{tL_0}.
    \end{equation*}
    Applying the one-dimensional spectral projection $P_0$ and using Hypothesis~\ref{hyp:spectrum_1d}, it is straightforward to see that the decomposition
\begin{align} \label{eq:decompKap}
\re^{tL} = \re^{t\Delta_y} P_0 + \mathcal{O}\!\left(\re^{-(\theta_1 / 2) t}\right), \qquad t \geq 0
\end{align}
holds true.
After using~\eqref{eq:defP0} and observing that~\eqref{eq:defdbot} with $D = I$ gives $d_{\bot} = 1$ by~\eqref{eq:normalization}, this yields Theorem~\ref{thm:decomposition}, even with an exponential rate.
Note however that this case is highly nongeneric for several reasons:
\begin{itemize}
    \item $\sigma(L_k) = \sigma(L_0) - k^2$, so Hypothesis~\ref{hyp:spectrum_multid} is implied by Hypothesis~\ref{hyp:spectrum_1d}, precluding the occurrence of transverse instabilities, which are frequently reported in reaction-diffusion models in the literature; see~\cite{carter_criteria_2023,terman_stability_1990,taniguchi_instability_2003} and references therein. 
    \item $\lambda_{\mathrm{c}}(\xi) = \lambda_0(\xi)$ and $P_{\xi} = P_0$ for all $\xi$ in Proposition~\ref{prop:speccons}.
    \item $\re^{t\hat{L}_{\xi}} = \re^{\lambda_0(\xi)t} \re^{tL_0}$, so Lemmas~\ref{lem:semigroup_allfreq}, \ref{lem:semigroup_lowfreq}, and \ref{lem:semigroup_midhighfreq} are trivial.
\end{itemize}
\end{remark}

\subsection{Fourier representation and contour shift} \label{subsec:Fourier_rep}

Let $v \in \Cub(\bbR^{d})$. Taking the Fourier transform in the transverse $y$-variable, we obtain the following representations
\begin{align} \label{eq:Fourier_rep}
\begin{split}
    [\re^{tL}v](t,y,z) &= (2\pi)^{1-d}\int_{\bbR^{d-1}} \int_{\bbR^{d-1}} \re^{\ri \langle \xi, y - \tilde{y}\rangle} [\re^{t\hat{L}_{\xi}}v(\tilde{y},\cdot)](z) \rd \xi \rd \tilde{y}, \\
    [\re^{t d_{\bot}\Delta_y}P_0 v](t,y,z) &= (2\pi)^{1-d}\int_{\bbR^{d-1}} \int_{\bbR^{d-1}} \re^{\ri \langle \xi, y - \tilde{y}\rangle} \re^{-td_{\bot}\lambda_0(\xi)}[P_0 v(\tilde{y},\cdot)](z) \rd \xi \rd \tilde{y},
\end{split}
\end{align}
where $t \geq 0$, $y \in \bbR^{d-1}$, and $z \in \bbR$. Using the semigroup bounds from Lemma~\ref{lem:semigroup_allfreq}, it is seen that the inner integrals in~\eqref{eq:Fourier_rep} converge, whereas convergence of the outer integrals will follow a posteriori from the estimates in the upcoming Propositions~\ref{prop:offdiagonal} and \ref{prop:ondiagonal}.

Our strategy to establishing Theorem~\ref{thm:decomposition} is to bound the inner integral
\begin{align}
    \label{eq:defH}
    [Hv](t,y,\tilde{y},z) \coloneq \int_{\bbR^{d-1}}\re^{\ri \langle \xi, y - \tilde{y}\rangle} \hat{S}_\xi(t) [v(\tilde{y},\cdot)](z) \rd \xi, \qquad \hat{S}_\xi(t) \coloneq \re^{t \hat{L}_{\xi }} - \re^{-td_{\bot} \lambda_0(\xi)}P_0,
\end{align}
arising in the Fourier representation of the remainder $S(t)v = \smash{\re^{tL}v - \re^{t d_{\bot}\Delta_y}P_0v}$, in a way which is integrable in $\tilde{y}$.
A significant challenge is that the factor $\re^{\ri \langle \xi,y - \tilde{y}\rangle}$ does not provide any integrability in $\tilde{y}$ for real-valued $\xi \in \bbR^{d-1}$.
To address this, we analytically continue the frequency into the complex plane by adding a well-chosen imaginary offset proportional to
\begin{equation*}
w \coloneq w(t,y,\tilde{y}) \coloneq \frac{y - \tilde{y}}{ t}.
\end{equation*}
This then leads to the family of operators
\begin{equation}
\label{eq:Heps}
\begin{aligned}
    [H_{\eps}v](t,y,\tilde{y},z) &\coloneq \int_{\bbR^{d-1}}\re^{\ri \langle \xi + \ri \eps w(t,y,\tilde{y}), y - \tilde{y}\rangle} \hat{S}_{\xi + \ri \eps w(t,y,\tilde{y})}[v(\tilde{y},\cdot)] \rd \xi \\
    &= \re^{-\eps \frac{\abs{y - \tilde{y}}^2}{t}}\int_{\bbR^{d-1}}\re^{\ri \langle \xi, y - \tilde{y}\rangle} \hat{S}_{\xi + \ri \eps w(t,y,\tilde{y})}[v(\tilde{y},\cdot)] \rd \xi,
\end{aligned}
\end{equation}
where $\eps \in \bbR$ (we will only use $\eps > 0$), $t > 0$, $y,\yt \in \bbR^{d-1}$, and $z \in \bbR$. By viewing~\eqref{eq:defH} as a superposition of contour integrals, we may use Cauchy's integral theorem and the semigroup estimates from Lemma~\ref{lem:semigroup_allfreq} to shift $\xi \mapsto \xi + \ri \eps w$ and conclude that \begin{align} [H_{\eps}v](t,y,\tilde{y},z) = [Hv](t,y,\tilde{y},z) \label{eq:Cauchy}\end{align} 
for every $\eps > 0$, $t > 0$, $y,\tilde{y} \in \bbR^{d-1}$, and $z \in \bbR$, thereby crucially exploiting analyticity of the Fourier symbol $\smash{\hat{S}_\xi(t)}$ in $\xi$. We will make effective use of~\eqref{eq:Cauchy} in the upcoming propositions to render sufficient localization in $\tilde{y}$.
Additionally, it will be convenient to distinguish between the off-diagonal regime where $\abs{y - \tilde{y}}/t \gg 1$, and the on-diagonal regime where $\abs{y - \tilde{y}} / t \lesssim 1$.

The ideas of complexifying $\xi$ and distinguishing between the off-diagonal and on-diagonal regime to obtain integrable pointwise Green's function bounds can be traced at least as far back as the linear stability analysis of multidimensional viscous shock waves in~\cite{Hoff_pointwise_2002}, although the techniques take quite a different shape in this work. 

\begin{proposition}[Off-diagonal bounds]
    \label{prop:offdiagonal}
    There exist $M,\nu_1 > 0$ such that the estimate
    \begin{equation}
        \label{eq:offdiagonal}
        \abs{[Hv](t,y,\tilde{y},z)} \lesssim \re^{-\nu_1 t - \nu_1 \frac{\abs{y - \tilde{y}}^2}{t}} \norm{v}_{\infty}
    \end{equation}
    holds for all $v \in \Cub(\bbR^{d})$, $y,\tilde{y} \in \bbR^{d-1}$, and $t \geq 1$ with $\abs{y - \tilde{y}} / t \geq M$.
\end{proposition}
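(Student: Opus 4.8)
The plan is to estimate $[Hv]$ by exploiting the contour-shift identity $[H_\eps v] = [Hv]$ from~\eqref{eq:Cauchy}, choosing the shift parameter $\eps > 0$ small and the threshold $M$ in the off-diagonal condition $\abs{y-\tilde y}/t \geq M$ large, and bounding the integrand of~\eqref{eq:Heps} using nothing more than the crude all-frequency estimate of Lemma~\ref{lem:semigroup_allfreq}. Writing $w = w(t,y,\tilde y) = (y-\tilde y)/t$ (so $\abs{w} = \abs{y-\tilde y}/t \geq M$), recall from~\eqref{eq:Heps} that
\[
[Hv](t,y,\tilde y,z) = \re^{-\eps\abs{y-\tilde y}^2/t}\int_{\bbR^{d-1}}\re^{\ri\langle\xi_0,y-\tilde y\rangle}\,\hat S_{\xi_0 + \ri\eps w}(t)[v(\tilde y,\cdot)](z)\rd\xi_0 .
\]
The key feature to keep in mind is that the imaginary offset $\eps w$ produces the scalar prefactor $\re^{-\eps\abs{y-\tilde y}^2/t}$, but it also enters the Fourier symbol, where Lemma~\ref{lem:semigroup_allfreq} only controls it at the cost of a factor $\re^{\kappa_1\abs{\eps w}^2 t} = \re^{\kappa_1\eps^2\abs{y-\tilde y}^2/t}$.

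First I would bound the integrand pointwise. Since $\hat S_\xi(t) = \re^{t\hat L_\xi} - \re^{-td_\bot\lambda_0(\xi)}P_0$ with $\abs{[P_0 g](z)} = \abs{\langle e^*,g\rangle_2\,\phi'(z)} \lesssim \norm{g}_\infty$ (using the $L^1$-localization of $e^*$ and boundedness of $\phi'$ from Proposition~\ref{prop:speccons}), Lemma~\ref{lem:semigroup_allfreq} with imaginary part $\xi_1 = \eps w$, so that $\abs{\xi_1}^2 t = \eps^2\abs{w}^2 t = \eps^2\abs{y-\tilde y}^2/t$, gives
\[
\abs{\hat S_{\xi_0 + \ri\eps w}(t)[v(\tilde y,\cdot)](z)} \lesssim \re^{\kappa_1 t - \mu_1\abs{\xi_0}^2 t + \kappa_1\eps^2\abs{y-\tilde y}^2/t}\,\norm{v}_\infty ,
\]
uniformly in $z$. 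The factor $\re^{-\mu_1\abs{\xi_0}^2 t}$ makes the $\xi_0$-integral absolutely convergent and contributes a Gaussian factor $\lesssim t^{-(d-1)/2} \leq 1$ for $t \geq 1$; together with the prefactor $\re^{-\eps\abs{y-\tilde y}^2/t}$ this yields
\[
\abs{[Hv](t,y,\tilde y,z)} \lesssim \re^{\kappa_1 t + (\kappa_1\eps^2 - \eps)\abs{y-\tilde y}^2/t}\,\norm{v}_\infty \qquad (t \geq 1).
\]

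The heart of the matter — and the only genuine obstacle — is that Lemma~\ref{lem:semigroup_allfreq} carries an unavoidable exponential growth $\re^{\kappa_1 t}$ in time; the sharper bounds of Lemmas~\ref{lem:semigroup_lowfreq} and~\ref{lem:semigroup_midhighfreq} are unavailable here, precisely because the imaginary shift $\eps w$ is unbounded in the off-diagonal regime. This growth is defeated by the hypothesis $\abs{y-\tilde y}/t \geq M$, which supplies a surplus of spatial Gaussian decay: $\abs{y-\tilde y}^2/t = \abs{w}^2 t \geq M^2 t$. Concretely, I would first fix $\eps \leq 1/(2\kappa_1)$ so that $\kappa_1\eps^2 - \eps \leq -\eps/2$, then split $-\tfrac{\eps}{2}\abs{y-\tilde y}^2/t = -\tfrac{\eps}{4}\abs{y-\tilde y}^2/t - \tfrac{\eps}{4}\abs{y-\tilde y}^2/t$ and bound the last term by $-\tfrac{\eps M^2}{4}t$, and finally fix $M$ large enough that $\tfrac{\eps M^2}{4} \geq \kappa_1 + \tfrac{\eps}{4}$. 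The total exponent is then at most $-\tfrac{\eps}{4}t - \tfrac{\eps}{4}\abs{y-\tilde y}^2/t$, which is~\eqref{eq:offdiagonal} with $\nu_1 = \eps/4$. The remaining points — absolute convergence of the inner integral and the validity of~\eqref{eq:Cauchy} for these parameters — are routine and are already implicit in the discussion following~\eqref{eq:Fourier_rep}.
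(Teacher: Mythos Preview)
Your proof is correct and follows essentially the same approach as the paper's own proof: both use the contour-shift identity~\eqref{eq:Cauchy}, bound the integrand via the all-frequency estimate of Lemma~\ref{lem:semigroup_allfreq}, integrate out the Gaussian in $\xi_0$, and then trade the surplus spatial decay coming from $\abs{y-\tilde y}/t \geq M$ against the $\re^{\kappa_1 t}$ growth. The only differences are cosmetic---the paper fixes $\eps = (2\kappa_1)^{-1}$ and $M = 4\kappa_1$ explicitly rather than stating inequalities, and arrives at $\nu_1 = \min\{\kappa_1,(8\kappa_1)^{-1}\}$ instead of your $\nu_1 = \eps/4$.
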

\begin{proof}
    Applying Lemma~\ref{lem:semigroup_allfreq}, we obtain the bound
    \begin{align*}
        \tnorm{\re^{t\hat{L}_{\xi + \ri \eps w(t,y,\tilde{y})}}} + \abs{\re^{-td_{\bot}\lambda_0(\xi + \ri \eps w(t,y,\tilde{y}))}} \lesssim \re^{\kappa_1 t - \mu_1 \abs{\xi}^2 t + \kappa_1 \eps^2\abs{w(t,y,\tilde{y})}^2t} 
        = \re^{\kappa_1 t - \mu_1 \abs{\xi}^2 t + \kappa_1 \eps^2\frac{\abs{y - \tilde{y}}^2}{t}}
    \end{align*}
    for all $\eps > 0$, $y,\tilde{y} \in \bbR^{d-1}$, and $t \geq 1$. Thus, taking the absolute value inside the integral in~\eqref{eq:Heps}, setting $\eps = (2\kappa_1)^{-1}$, and integrating over $\xi$, we obtain
    \begin{align*}
        \abs{[H_{\eps}v](t,y,\tilde{y},z)} &\lesssim \re^{\kappa_1 t -\frac{\abs{y - \tilde{y}}^2}{4\kappa_1 t}}  \int_{\bbR^{d-1}}\re^{-\mu_1 \abs{\xi}^2 t} \rd \xi \cdot \|v\|_\infty
        \lesssim \re^{\kappa_1 t -\frac{\abs{y - \tilde{y}}^2}{4\kappa_1 t}} \|v\|_\infty \\
        &\leq \re^{\kappa_1 t -\frac{\abs{y - \tilde{y}}^2}{8 \kappa_1 t} - \frac{M^2}{8 \kappa_1} t} \|v\|_\infty
    \end{align*}
    for all $v \in \Cub(\bbR^{d})$, $y,\tilde{y} \in \bbR^{d-1}$, and $t \geq 1$ with $\abs{y - \tilde{y}} / t \geq M$. Choosing $M = 4 \kappa_1$ and recalling~\eqref{eq:Cauchy}, it follows that~\eqref{eq:offdiagonal} holds with $\nu_1 = \min\cur{\kappa_1,(8\kappa_1)^{-1}}$.
\end{proof}

\begin{proposition}[On-diagonal bounds]
    \label{prop:ondiagonal}
    Fix $M > 0$.
    Then, there exists $\nu_2 > 0$ such that the estimate
    \begin{equation}
        \label{eq:ondiagonal}
        \abs{[Hv](t,y,\tilde{y},z)} \lesssim t^{-(d+1)/2} \re^{-\nu_2 \frac{\abs{y - \tilde{y}}^2}{t}} \norm{v}_\infty
    \end{equation}
    holds for all $v \in \Cub(\bbR^{d})$, $y,\tilde{y} \in \bbR^{d-1}$, and $t \geq 1$ with $\abs{y - \tilde{y}} / t \leq M$.
\end{proposition}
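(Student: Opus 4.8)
The plan is to exploit the Cauchy identity~\eqref{eq:Cauchy}, replacing $[Hv]$ by $[H_\eps v]$ for a small parameter $\eps>0$ chosen at the very end. In the on-diagonal regime $\abs{w}=\abs{y-\tilde y}/t\le M$ the imaginary offset $\ri\eps w$ is uniformly bounded by $\eps M$, so once $\eps M\le k$ (with $k$ the constant of Lemma~\ref{lem:semigroup_lowfreq}) the shifted contour $\{\xi+\ri\eps w:\xi\in\bbR^{d-1}\}$ stays inside the low-frequency neighbourhood of Proposition~\ref{prop:speccons}. Crucially, the shift supplies the Gaussian prefactor $\re^{-\eps\abs{y-\tilde y}^2/t}=\re^{-\eps\abs{w}^2t}$, which is already the decay we are after (up to the value of $\nu_2$); what remains is to pull the absolute value inside the integral $\int_{\bbR^{d-1}}\re^{\ri\langle\xi,y-\tilde y\rangle}\hat S_{\xi+\ri\eps w}(t)[v(\tilde y,\cdot)]\rd\xi$ and bound it by $t^{-(d+1)/2}$ times a mild factor $\re^{+c\eps^2\abs{w}^2t}$ that the prefactor will absorb, using $\abs{[\hat S_{\xi+\ri\eps w}(t)v(\tilde y,\cdot)](z)}\le\tnorm{\hat S_{\xi+\ri\eps w}(t)}\,\norm{v}_\infty$ uniformly in $z$.

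Next I would split this $\xi$-integral at $\abs{\xi}=k$. On the mid-/high-frequency part $\abs{\xi}\ge k$, the real part of $\xi+\ri\eps w$ has modulus $\ge k$, so Lemma~\ref{lem:semigroup_midhighfreq} gives $\tnorm{\hat S_{\xi+\ri\eps w}(t)}\lesssim\re^{-\mu_3t-\mu_3\abs{\xi}^2t+\kappa_3\eps^2\abs{w}^2t}$; integrating $\re^{-\mu_3\abs{\xi}^2t}$ over $\abs{\xi}\ge k$ contributes $\lesssim t^{-(d-1)/2}$, and the surviving $\re^{-\mu_3t}$ upgrades this to $t^{-(d+1)/2}$ for $t\ge1$. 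On the low-frequency part $\abs{\xi}\le k$, I decompose
\begin{equation*}
\hat S_{\xi+\ri\eps w}(t)=\re^{t\hat L_{\xi+\ri\eps w}}(I-P_{\xi+\ri\eps w})+\bigl(\re^{t\hat L_{\xi+\ri\eps w}}P_{\xi+\ri\eps w}-\re^{-td_\bot\lambda_0(\xi+\ri\eps w)}P_0\bigr)
\end{equation*}
and invoke the two estimates of Lemma~\ref{lem:semigroup_lowfreq}: the first summand is $\lesssim\re^{-\mu_2t}$, which integrates over the bounded set $\abs{\xi}\le k$ to $\re^{-\mu_2t}\lesssim t^{-(d+1)/2}$; the second is $\lesssim t^{-1}\re^{-\mu_2\abs{\xi}^2t+\kappa_2\eps^2\abs{w}^2t}$, and since $\int_{\abs{\xi}\le k}\re^{-\mu_2\abs{\xi}^2t}\rd\xi\lesssim t^{-(d-1)/2}$ this term contributes $\lesssim t^{-1}t^{-(d-1)/2}=t^{-(d+1)/2}$.

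Finally I would collect the growth factors $\re^{+\kappa_2\eps^2\abs{w}^2t}$ and $\re^{+\kappa_3\eps^2\abs{w}^2t}$ and fix $\eps>0$ small enough that $\eps M\le k$ and $\kappa_2\eps\le\tfrac12$, $\kappa_3\eps\le\tfrac12$, so that the Gaussian prefactor dominates: $\re^{-\eps\abs{w}^2t+\kappa_i\eps^2\abs{w}^2t}\le\re^{-\frac{\eps}{2}\abs{w}^2t}=\re^{-\frac{\eps}{2}\abs{y-\tilde y}^2/t}$. Taking $\nu_2=\eps/2$ then yields~\eqref{eq:ondiagonal}. The main obstacle is the competing demands on $\eps$: a larger $\eps$ gives a stronger Gaussian prefactor but pushes the shifted contour out of the low-frequency neighbourhood and amplifies the $\re^{+\kappa_i\eps^2\abs{w}^2t}$ loss, which is precisely why the on-diagonal hypothesis $\abs{w}\le M$ (bounding the shift) is indispensable; the delicate part is this quantitative bookkeeping together with checking that the temporal decay surviving in each frequency regime is at least $t^{-(d+1)/2}$ on $t\ge1$, while the remaining estimates are routine Gaussian integration.
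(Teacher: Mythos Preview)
Your proposal is correct and follows essentially the same approach as the paper: shift the contour via~\eqref{eq:Cauchy}, split the $\xi$-integral at $\abs{\xi}=k$, apply Lemma~\ref{lem:semigroup_midhighfreq} on $\abs{\xi}\ge k$ and the two estimates of Lemma~\ref{lem:semigroup_lowfreq} on $\abs{\xi}\le k$, then choose $\eps$ small. The only cosmetic difference is that on the mid-/high-frequency piece the paper absorbs the loss $\re^{\kappa_3\eps^2\abs{w}^2t}$ into the exponential decay $\re^{-\mu_3 t}$ (using $\abs{w}\le M$ and imposing $\eps\le\sqrt{\mu_3/(2\kappa_3 M^2)}$), whereas you keep this loss and feed it into the Gaussian prefactor via $\kappa_3\eps\le\tfrac12$; both choices work and lead to the same $\nu_2=\eps/2$.
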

\begin{proof}
    As in the proof of Proposition~\ref{prop:offdiagonal}, it suffices to estimate $[H_{\eps}v](t,y,\tilde{y},z)$ for some appropriately chosen value of $\eps$ and subsequently use~\eqref{eq:Cauchy}.
    
    Before we start, we let $k$ be as in Lemma~\ref{lem:semigroup_lowfreq} and let $B_0$ be the ball in $\bbR^{d-1}$ of radius $k$ centered at the origin. Using~\eqref{eq:Heps}, we estimate
    \begin{align}
        \label{eq:ondiagonal_T1234}
        \re^{\eps \frac{\abs{y - \tilde{y}}^2}{t}}\abs{[H_{\eps}v](t,y,\tilde{y},z)} \lesssim \|v\|_\infty \int_{\bbR^{d-1}}\tnorm{\hat{S}_{\xi + \ri \eps w(t,y,\yt)} } \rd \xi \lesssim \|v\|_\infty(T_1 + T_2 + T_3),
    \end{align}
    for all $\eps > 0$, $v \in \Cub(\bbR^{d})$, $y,\tilde{y} \in \bbR^{d-1}$, $z \in \bbR$, and $t \geq 1$, where we have introduced the quantities
    \begin{align*}
        T_1 &\coloneq \int_{\bbR^{d-1} \setminus B_0}\tnorm{\re^{t\hat{L}_{\xi + \ri \eps w(t,y,\yt)}}} + \abs{\re^{-td_{\bot}\lambda_0(\xi + \ri \eps w(t,y,\yt))}} \rd \xi, \\
        T_2 &\coloneq \int_{B_0}\tnorm{\re^{t\hat{L}_{\xi + \ri \eps w(t,y,\yt)}}(I - P_{\xi + \ri \eps w(t,y,\yt)})} \rd \xi, \\
        T_3 &\coloneq \int_{B_0}\tnorm{\re^{t\hat{L}_{\xi + \ri \eps w(t,y,\yt)}}P_{\xi + \ri \eps w(t,y,\yt)} - \re^{-td_{\bot}\lambda_0(\xi + \ri\eps w(t,y,\yt))}P_0} \rd \xi,
    \end{align*}
    which we will estimate one by one.
        
    It holds by Lemma~\ref{lem:semigroup_midhighfreq} that
    \begin{align*}
        \tnorm{\re^{t\hat{L}_{\xi + \ri \eps w}}} + \abs{\re^{-td_{\bot}\lambda_0(\xi+ \ri \eps w)}} 
        \lesssim \re^{-\mu_3 t - \mu_3\abs{\xi}^2 t + \kappa_3 \eps^2 \abs{w}^2 t} 
        \lesssim \re^{-\mu_3 t - \mu_3\abs{\xi}^2 t + \kappa_3 \eps^2 M^2 t}
    \end{align*}
    for all $t \geq 0$, $\eps > 0$, $\xi \in \bbR^{d-1} \setminus B_0$, and $w \in \bbR^{d-1}$ with $\abs{w} \leq M$. Thus, it follows, after  integrating over $\xi$, that
    \begin{align*}
        T_1 \lesssim \re^{-\frac{\mu_3}{2} t}
    \end{align*}
    for all $\eps > 0$, $y,\tilde{y} \in \bbR^{d-1}$, and $t \geq 1$ with $\abs{y - \tilde{y}} / t \leq M$ and $\eps \leq \sqrt{\mu_3 / (2 \kappa_3 M^2)}$.
    
    We proceed with bounding $T_2$ and $T_3$. By Lemma~\ref{lem:semigroup_lowfreq} we have
    \begin{align*}
        \tnorm{\re^{t\hat{L}_{\xi + \ri \eps w}}(I - P_{\xi + \ri \eps w})} &\lesssim \re^{-\mu_2 t},\\
        \tnorm{\re^{t\hat{L}_{\xi + \ri \eps w}}P_{\xi + \ri \eps w} - \re^{-td_{\bot}\lambda_0(\xi + \ri \eps w)}P_0}
            &\lesssim t^{-1} \re^{-\mu_2\abs{\xi}^2 t + \kappa_2 \eps^2 \abs{w}^2 t}
    \end{align*}
    for all $t \geq 1$, $\eps \in (0,kM^{-1})$, $\xi\in B_0$, and $w \in \bbR^{d-1}$ with $\abs{w} \leq M$. Integrating these bounds over $\xi \in B_0$ gives 
    \begin{align*}
        T_2 &\lesssim \re^{-\mu_2 t},\\
        T_3 &\lesssim t^{-1}t^{-(d-1)/2} \re^{\kappa_2 \eps^2 \abs{w(t,y,\yt)}^2 t} \leq t^{-(d+1)/2} \re^{\frac{\eps \abs{y - \tilde{y}}^2}{2 t}},
    \end{align*}
    for $\eps \in (0,\min\cur{k M^{-1}, (2\kappa_2)^{-1}})$, $y,\tilde{y} \in \bbR^{d-1}$, and $t \geq 1$ with $\abs{y - \tilde{y}} / t \leq M$. Thus, choosing $\eps > 0$ such that the established bounds on $T_1,T_2$, and $T_3$ hold simultaneously and looking back at~\eqref{eq:Cauchy} and~\eqref{eq:ondiagonal_T1234}, we obtain
    \begin{align*}
        \abs{[Hv](t,y,\tilde{y},z) } = \abs{[H_{\eps}v](t,y,\tilde{y},z) } \lesssim t^{-(d+1)/2}\re^{-\frac{\eps \abs{y - \tilde{y}}^2}{2 t}}\|v\|_\infty
    \end{align*}
    for all $v \in \Cub(\bbR^{d})$, $y,\tilde{y} \in \bbR^{d-1}$, and $t \geq 1$ with $\abs{y - \tilde{y}} / t \leq M$. We conclude that~\eqref{eq:ondiagonal} holds with $\nu_2 = \eps / 2$.
\end{proof}

We now combine the off-diagonal and on-diagonal bounds to prove Theorem~\ref{thm:decomposition}.

\begin{proof}[Proof of Theorem~\ref{thm:decomposition}]
    Let $M,\nu_1 > 0$ be as in Proposition~\ref{prop:offdiagonal}, let $\nu_2 > 0$ be as in Proposition~\ref{prop:ondiagonal}, and set $\nu_3 = \min\cur{\nu_1,\nu_2}$.
    For $t \geq 1$ and $y \in \bbR^{d-1}$, define $V_{t,y}$ to be the set of all $\tilde{y} \in \bbR^{d-1}$ which satisfy $\abs{y - \tilde{y}} / t \leq M$.
    Then, we have by~\eqref{eq:Fourier_rep},~\eqref{eq:defH},~\eqref{eq:offdiagonal}, and~\eqref{eq:ondiagonal}:
    \begin{align*}
        \abs{[\re^{tL}v - \re^{td_{\bot}\Delta_y}P_0 v](t,y,z)}
        &\leq \int_{\bbR^{d-1}} \abs{[Hv](t,y,\tilde{y},z)} \rd \tilde{y} \\
        &= \int_{\bbR^{d-1} \setminus V_{t,y}} \abs{[Hv](t,y,\tilde{y},z)} \rd \tilde{y} 
        + \int_{V_{t,y}} \abs{[Hv](t,y,\tilde{y},z)} \rd \tilde{y} \\
        &\lesssim \frac{\|v\|_\infty}{\re^{\nu_3 t}}\int_{\bbR^{d-1}\setminus V_{t,y}} \re^{-\nu_3\frac{\abs{y - \tilde{y}}^2}{t}} \rd \tilde{y}
        + \frac{\|v\|_\infty}{t^{(d+1)/2}} \int_{V_{t,y}} \re^{-\nu_3\frac{\abs{y - \tilde{y}}^2}{t}} \rd \tilde{y} \\
        &\lesssim  \frac{\|v\|_\infty}{t^{(d+1)/2}} \int_{\bbR^{d-1}}\re^{-\nu_3 \frac{\abs{y - \tilde{y}}^2}{t}} \rd \tilde{y} 
        \lesssim \frac{\|v\|_\infty}{t}
    \end{align*}
    for all $v \in \Cub(\bbR^d)$ $t \geq 1$, $y \in \bbR^{d-1}$, and $z \in \bbR$,    
    which establishes~\eqref{eq:linearest} for $t \geq 1$.
    For short times $t \in [0,1]$, the bound~\eqref{eq:linearest} follows from standard semigroup theory; see~\cite[Proposition~I.5.5]{engel_oneparameter_2000}.
\end{proof}

\section{Nonlinear theory:  tracking scheme}
\label{sec:modulation}
In this section, we set up the tracking scheme, which we will employ to close a nonlinear stability argument and prove our main results. In particular, we introduce a spatiotemporal modulation of the front to capture its neutral translational dynamics.

\subsection{Front modulation}
\label{subsec:shift}

Our objective is to describe the dynamics of the solution $u(t)$ to~\eqref{eq:rdefull}, where $E_0 \coloneq \|u_0 - \phi\|_\infty$ is sufficiently small.
Inspired by the nonlinear stability theory for periodic waves~\cite{Doelman_dynamics_2009,johnson_nonlinear_2011}, we introduce the \emph{inverse modulation Ansatz}
\begin{equation}
 	\label{eq:modulation}
	\tilde{u}(t,y,z) = u(t,y,z + ct + \sigma(t,y)),
\end{equation}
where $\sigma \colon [0,\infty) \times \bbR^{d-1} \to \bbR$ is a smooth modulation function that we will choose a posteriori so as to track the leading-order position of the front interface. Concretely, $\sigma$ is selected so that the \emph{inverse-modulated perturbation}
\begin{equation}
    \label{eq:vurel}
    v(t,y,z) = u(t,y,z + ct + \sigma(t,y)) - \phi(z)
\end{equation}
exhibits temporal decay. Since $\|v(t)\|_\infty$ is equal to the $L^\infty$-norm of the \emph{forward-modulated perturbation}
\begin{align*} 
\mathring{v}(t,y,z) = u(t,y,z) - \phi(z - ct - \sigma(t,y)),
\end{align*}
it suffices to control $v(t)$ to establish the key estimate~\eqref{eq:thmest4}. This decay estimate holds precisely if $\sigma$ captures the neutral translational dynamics of the front to leading order.

Using~\eqref{eq:modulation}, we obtain the relations
\begin{align*}
	u(t,y,z) &= \tilde{u}(t,y,z-ct-\sigma), \\
	u_t(t,y,z) &= \tilde{u}_t(t,y,z-ct-\sigma) - (c+\sigma_t)\tilde{u}_z(t,y,z-ct-\sigma), \\
	u_{y_i}(t,y,z) &= \tilde{u}_{y_i}(t,y,z-ct-\sigma) - \sigma_{y_i} \tilde{u}_z(t,y,z-ct-\sigma), \\
	u_{y_iy_i}(t,y,z) &= \tilde{u}_{y_iy_i}(t,y,z-ct-\sigma) - 2\sigma_{y_i} \tilde{u}_{y_iz}(t,y,z-ct-\sigma) \\
	&\qquad - \, \sigma_{y_iy_i}\tilde{u}_z(t,y,z-ct-\sigma) + \sigma_{y_i}^2 \tilde{u}_{zz}(t,y,z-ct-\sigma)
\end{align*}
for $i = 1,\ldots,d-1$, where we have abbreviated $\sigma(t,y)$ as $\sigma$. Substituting these identities into~\eqref{eq:rde} and shifting coordinates back gives
\begin{align}
	\label{eq:utildepde}
	\partial_t\tilde{u} - (c + \sigma_t)\tilde{u}_z
	= D\Delta \tilde{u} + f(\tilde{u})
    - \Delta_y \sigma\, D \tilde{u}_z
    - 2 \nabla \sigma \cdot \nabla D\tilde{u}_z
    + |\nabla \sigma|^2 D \tilde{u}_{zz}
\end{align}
with all terms evaluated at $(t,y,z)$.

Although the viscous Hamilton--Jacobi equation~\eqref{eq:HJ} provides an effective description of $\sigma$, it is neither accurate enough to yield the decay rate required in~\eqref{eq:thmest4} nor sufficiently regular near $t=0$ to serve as a convenient starting point. Instead, we augment~\eqref{eq:HJ} with a mild forcing term $g$, and then prescribe $\sigma$ via~\eqref{eq:pdesigma}, thus imposing the initial condition $\sigma(0) = 0$. Substituting~\eqref{eq:pdesigma} into~\eqref{eq:utildepde} yields
\begin{equation}
\label{eq:utildepde2}
\begin{aligned}
\partial_t\tilde{u} &= D\Delta \tilde{u} + c\,\tilde{u}_z + f(\tilde{u}) \\
&\qquad + \Delta_y \sigma\,(d_{\bot}\tilde{u}_z - D\tilde{u}_z)
    + |\nabla \sigma|^2\,(D\tilde{u}_{zz} + \tfrac{1}{2}c\,\tilde{u}_z)
    - 2\nabla \sigma \cdot \nabla D\tilde{u}_z
    - g\,\tilde{u}_z.
\end{aligned}
\end{equation}
Subsequently inserting~\eqref{eq:vurel} into~\eqref{eq:utildepde2} and using~\eqref{eq:modulation} together with $\sigma(0)=0$ to determine the initial condition, we obtain that the inverse-modulated perturbation $v$ satisfies
\begin{equation}
\begin{aligned}
    \label{eq:pdev}
	\partial_t v &= L v
    + \Delta \sigma \, (d_{\bot}\phi' - D\phi')
    + |\nabla \sigma|^2 \, (D\phi'' + \tfrac12 c \, \phi')
    + N(v,\sigma,g) - g\, \phi', \\
    v(0) &= u_0 - \phi,
\end{aligned}
\end{equation}
where the nonlinearity is given by
\begin{equation}
    \label{eq:defN}
\begin{aligned}
	N(v,\sigma,g) &= f(\phi+v) - f(\phi) - f'(\phi)v
        + \Delta \sigma\, (d_{\bot}v_z - Dv_z)
        + |\nabla \sigma|^2\,(D v_{zz} + \tfrac12 c \,v_z) \\
	&\qquad - 2\nabla \sigma \cdot \nabla D v_z
        - g \, v_z.
\end{aligned}
\end{equation}
Note that there is no term of the form $\nabla \sigma \cdot \nabla \phi'$, since $\sigma$ is independent of $z$ and $\phi$ is independent of $y$. Using Taylor's theorem, we readily establish the following.

\begin{lemma}[Bound on nonlinearity]
	\label{lem:Nestimate1}
    Fix a constant $C > 0$. It holds
	\begin{align}
        \label{eq:Nest1}
		\norm{N(v,\sigma,g)}_{\infty} &\lesssim \norm{v}_{\infty}^2 + \norm{v}_{C^2} \, \bra[\big]{\norm{\Delta \sigma}_{\infty} + \norm{\nabla \sigma}_{\infty} + \norm{\nabla \sigma}_{\infty}^2 + \norm{g}_{\infty}}
	\end{align}
    for all $g \in C_{\mathrm{ub}}(\bbR^{d-1})$, $\sigma \in C_{\mathrm{ub}}^2(\bbR^{d-1})$, and $v \in C_{\mathrm{ub}}^2(\bbR^d)$ with $\|v\|_\infty \leq C$.
\end{lemma}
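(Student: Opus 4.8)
The goal is to bound $\norm{N(v,\sigma,g)}_\infty$ term by term, where $N$ is defined in~\eqref{eq:defN}. The plan is to treat each of the six summands separately and collect the resulting bounds.

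For the reaction part $f(\phi+v) - f(\phi) - f'(\phi)v$, I would invoke Taylor's theorem with integral remainder: writing $f(\phi+v) - f(\phi) - f'(\phi)v = \int_0^1 (1-s)\, f''(\phi + sv)[v,v]\rd s$, and using that $f$ is smooth and $\phi$ is bounded with $\|v\|_\infty \le C$, the second derivative $f''$ is bounded on the relevant compact set, so this term is $\lesssim \norm{v}_\infty^2$. For the remaining five terms, each is a product of a coefficient depending on $\sigma$ or $g$ (namely $\Delta\sigma$, $|\nabla\sigma|^2$, $\nabla\sigma$, or $g$) with a first- or second-order $z$-derivative of $v$. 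Since constant matrices and the fixed constants $d_\bot, c$ contribute only fixed multiplicative factors, and $\|v_z\|_\infty, \|v_{zz}\|_\infty \le \|v\|_{C^2}$, one applies the elementary estimate $\|ab\|_\infty \le \|a\|_\infty \|b\|_\infty$ for each product. This yields
\begin{align*}
	\norm{\Delta\sigma\,(d_\bot v_z - D v_z)}_\infty &\lesssim \norm{\Delta\sigma}_\infty \norm{v}_{C^2}, \\
	\norm{|\nabla\sigma|^2\,(Dv_{zz} + \tfrac12 c\, v_z)}_\infty &\lesssim \norm{\nabla\sigma}_\infty^2 \norm{v}_{C^2}, \\
	\norm{2\nabla\sigma\cdot\nabla D v_z}_\infty &\lesssim \norm{\nabla\sigma}_\infty \norm{v}_{C^2}, \\
	\norm{g\, v_z}_\infty &\lesssim \norm{g}_\infty \norm{v}_{C^2}.
\end{align*}
Summing these and the reaction bound, and absorbing all fixed constants into the implicit constant (which is allowed to depend on $C$ through the bound on $f''$), gives~\eqref{eq:Nest1}.

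There is essentially no obstacle here: the statement is a routine consequence of Taylor's theorem and the submultiplicativity of the supremum norm. The only minor point requiring care is that the implicit constant in~\eqref{eq:Nest1} is permitted to depend on the a priori bound $C$ on $\|v\|_\infty$, since the bound on $\sup |f''|$ is taken over the compact set $\{\phi(z) + \xi : z \in \bbR,\ |\xi| \le C\}$, which is bounded because $\phi$ is bounded; this is exactly why $C$ is fixed in the hypothesis. One should also note that the spatial regularity assumptions ($\sigma \in C^2_{\mathrm{ub}}$, $v \in C^2_{\mathrm{ub}}$, $g \in C_{\mathrm{ub}}$) are precisely what is needed for every derivative appearing in $N$ to be a bounded continuous function, so all norms on the right-hand side are finite.
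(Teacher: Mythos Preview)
Your proof is correct and follows exactly the approach the paper indicates: the paper merely writes ``Using Taylor's theorem, we readily establish the following'' without further details, and your argument supplies precisely those details. The only (inconsequential) slip is a miscount of summands and the remark that the non-reaction terms involve ``$z$-derivatives of $v$'': the term $2\nabla\sigma\cdot\nabla Dv_z$ involves the mixed derivatives $\partial_{y_i}\partial_z v$, but these are of course still controlled by $\norm{v}_{C^2}$, so your bound stands.
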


\begin{remark} \label{rem:invvsforward}
Spatiotemporal front modulation is also used in the nonlinear stability analysis of planar fronts under localized perturbations in~\cite{kapitula_multidimensional_1997}. There, one directly estimates the forward-modulated perturbation $\mathring{v}(t)$. In contrast to~\eqref{eq:pdev}, the resulting equation for $\mathring{v}(t)$ contains nonlinear terms of the form $\left(f'(\phi(\cdot+\sigma)) - f'(\phi)\right)\mathring{v}$. In our setting of $\Cub$-perturbations, available decay rates are too weak to control such contributions, which decay only like $t^{-1/2}$. Thus, in our analysis it is advantageous to work with the inverse-modulated perturbation instead.
\end{remark}

\subsection{Forcing}
\label{subsec:forcing}
We specify the forcing term in~\eqref{eq:pdesigma}, which determines $\sigma$. To this end, we use the temporal cut-off function $\varrho$, introduced in Section~\ref{subsec:notation}, which is smooth, nonnegative, has unit integral, and is supported on $[1/4,3/4]$. For $t \geq 0$, we then set
\begin{align} \label{eq:defgnew} 
\begin{split}
g(t) &= \varrho(t) \re^{t d_{\bot}\Delta_y} \langle e^*, u_0 - \phi \rangle_2\\ 
&\qquad + \, \sum_{k \in \bbN} \varrho(t - k)\re^{(t-k)d_{\bot}\Delta_y} \int_{k-1}^k \re^{(k-r)d_{\bot}\Delta_y} \langle e^*, N(v(r),\sigma(r),g(r))\rangle_2 \rd r . 
\end{split}
\end{align}
We will see in Section~\ref{sec:nonlinearstab} that this specific choice of $g$ cancels the critical nonlinear contributions in equation~\eqref{eq:pdev} in a gentle way. In particular, it enables us to establish decay of $v$, ensuring that the modulation $\sigma$ indeed captures the leading-order dynamics of the front interface. 

The properties of $\varrho$ guarantee that $g$ is smooth in both space and time. We will show that this smoothness is inherited by the modulation $\sigma$. Another useful feature of~\eqref{eq:defgnew} is that, for $t\in[0,1]$ and $n\in \bbN$, the quantity $g(n+t)$ is fully determined by $N(v(s),\sigma(s),g(s))$ on $[0,n]$.  As a result, $g$ and $\sigma$ can be extended in the nonlinear iteration without introducing an additional fixed-point argument, providing a simpler local analysis than that required by previous stability approaches based on spatiotemporal modulation; cf.~\cite[Appendix~B]{derijk_nonlinear_2024}.

Since $\varrho$ is supported on $[1/4,3/4]$, the nonlinearity in~\eqref{eq:defgnew} is always preceded by the smoothing heat semigroup $\smash{\re^{\frac18 d_\perp \Delta_y}}$.
The inner product against $e^*$ provides additional smoothing in the $z$-direction.
These smoothing effects are captured in the following result.

\begin{lemma}[Bound on smoothed nonlinearity]
	\label{lem:Nestimate2}
    Fix a constant $C > 0$. We have
    \begin{align}
        \label{eq:Nest2}
        \norm{\re^{\frac{1}{8} d_{\bot}\Delta_y} \langle e^*,N(v,\sigma,g)\rangle_2}_{\infty} &\lesssim \norm{v}_{\infty}\bra[\big]{\norm{v}_{\infty} + \norm{\Delta \sigma}_{\infty} + \norm{\nabla \sigma}_{\infty} + \norm{\nabla \sigma}_{\infty}^2 + \norm{g}_{\infty}}
	\end{align}
    for all $g \in C_{\mathrm{ub}}(\bbR^{d-1})$, $\sigma \in C_{\mathrm{ub}}^2(\bbR^{d-1})$, and $v \in C_{\mathrm{ub}}^2(\bbR^d)$ with $\|v\|_\infty \leq C$.
\end{lemma}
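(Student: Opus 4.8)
\emph{Proof plan.} The plan is to split $N(v,\sigma,g)$ according to~\eqref{eq:defN} and estimate each of the six terms separately, exploiting two distinct smoothing effects: integration by parts in $z$ to transfer longitudinal derivatives of $v$ onto $e^*$, whose derivatives are smooth and exponentially localized (hence lie in $L^1(\bbR)$) by Proposition~\ref{prop:speccons}, and the parabolic smoothing of the prefixed heat semigroup $\re^{\frac18 d_\bot\Delta_y}$ quantified in Lemma~\ref{lem:heat}. Throughout, the integrations by parts in $z$ and the differentiation under the integral sign are legitimate thanks to the localization and smoothness of $e^*$ and its derivatives together with the hypothesis $v \in \Cub^2(\bbR^d)$.

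First I would dispatch the reaction term $f(\phi+v) - f(\phi) - f'(\phi)v$: Taylor's theorem and smoothness of $f$ give a pointwise bound $\lesssim \norm{v}_\infty^2$ on the region $\cur{\norm{v}_\infty \le C}$; pairing with $e^* \in L^1(\bbR)$ produces a function of $y$ of size $\lesssim \norm{v}_\infty^2$, and $\re^{\frac18 d_\bot\Delta_y}$ is $L^\infty$-bounded by Lemma~\ref{lem:heat}. The terms $\Delta\sigma\,(d_\bot - D)v_z$, $\tfrac12 c\abs{\nabla\sigma}^2 v_z$, $\abs{\nabla\sigma}^2 D v_{zz}$, and $-g\,v_z$ all have coefficients depending only on $y$ and carry at most two $z$-derivatives of $v$; here I would integrate by parts in $z$ (once, resp.\ twice for the $v_{zz}$-term) to move these derivatives onto $e^*$, using exponential localization of $(e^*)'$ and $(e^*)''$ and symmetry of $D$, so that the resulting $z$-integral is $\lesssim \norm{v}_\infty$ uniformly in $y$, the $y$-coefficient factors out, and $L^\infty$-boundedness of $\re^{\frac18 d_\bot\Delta_y}$ yields contributions $\lesssim \norm{v}_\infty\big(\norm{\Delta\sigma}_\infty + \norm{\nabla\sigma}_\infty^2 + \norm{g}_\infty\big)$.

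The main obstacle is the mixed-derivative term $-2\nabla\sigma\cdot\nabla_y D v_z = -2\sum_i \sigma_{y_i} D\,\partial_{y_i}\partial_z v$. Integrating by parts in $z$ onto $e^*$ and differentiating under the integral sign turns this (up to sign) into $2\,\nabla\sigma\cdot\nabla_y h$, where $h(y) \coloneq \langle (e^*)', Dv(y,\cdot)\rangle_2$ satisfies $\norm{h}_\infty \lesssim \norm{v}_\infty$; since $\norm{\nabla_y h}_\infty$ is \emph{not} controlled by $\norm{v}_\infty$, the product cannot be bounded directly. The remedy is to rewrite $\nabla\sigma\cdot\nabla_y h = \operatorname{div}_y(h\nabla\sigma) - h\,\Delta\sigma$, observe that $\operatorname{div}_y$ commutes with $\re^{\frac18 d_\bot\Delta_y}$, and invoke the gradient estimate $\norm{\nabla_y \re^{\frac18 d_\bot\Delta_y} w}_\infty \lesssim \norm{w}_\infty$ from Lemma~\ref{lem:heat} (the $\sqrt t$-weight being a fixed constant at $t = \tfrac18$) to obtain $\norm{\re^{\frac18 d_\bot\Delta_y}\operatorname{div}_y(h\nabla\sigma)}_\infty \lesssim \norm{h\nabla\sigma}_\infty \lesssim \norm{v}_\infty\norm{\nabla\sigma}_\infty$, while $\norm{\re^{\frac18 d_\bot\Delta_y}(h\,\Delta\sigma)}_\infty \lesssim \norm{v}_\infty\norm{\Delta\sigma}_\infty$. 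Collecting the six contributions yields~\eqref{eq:Nest2}.
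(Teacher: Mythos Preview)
Your proposal is correct and follows essentially the same approach as the paper's proof: both split $N$ term by term, integrate by parts in $z$ to move longitudinal derivatives onto $e^*$, and handle the critical mixed-derivative term $-2\nabla\sigma\cdot\nabla D v_z$ via the identity $\nabla\sigma\cdot\nabla_y h = \operatorname{div}_y(h\nabla\sigma) - h\,\Delta\sigma$ with $h = \langle e^*_z, Dv\rangle_2$, using the heat-semigroup gradient bound from Lemma~\ref{lem:heat} to absorb the divergence. The only cosmetic difference is that the paper presents the rewritten expression for $\langle e^*,N\rangle_2$ in one formula before estimating, whereas you describe each step separately; your mention of symmetry of $D$ is unnecessary for the $z$-integration by parts but harmless.
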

\begin{proof}
    We first rewrite
    \begin{align*}
        \langle e^*,N(v,\sigma,g)\rangle_2 &= \langle e^*, f(\phi + v) - f(\phi) - f'(\phi)v \rangle_2 - \Delta \sigma \, \langle e^*_z, d_{\bot}v - D v \rangle_2  \\
        &\qquad + \, \abs{\nabla \sigma}^2 \, \bra[\big]{\langle e^*_{zz}, D v\rangle_2 
         - \tfrac{1}{2}c \, \langle e^*_z,  v\rangle_2}  + 2\, \nabla \cdot \bra[\big]{\nabla \sigma \, \langle e^*_z, Dv\rangle_2}\\ 
        &\qquad - \, 2\, \Delta \sigma \, \langle e^*_z,D v\rangle_2 + g \, \langle e^*_z, v\rangle_2,
    \end{align*}
    using integration by parts with respect to $z$ and the product rule.
    Using integrability of $e^*,e^*_z$, and $e^*_{zz}$, every term can now be estimated in the obvious way except for the last term on the second line.
    Here, we use the smoothing effect of the heat kernel to see that
    \begin{equation*}
        \norm{\re^{\frac{1}{8} d_{\bot}\Delta_y} \sqr[\big]{ \nabla \cdot (\nabla \sigma\langle e^*_z, D v\rangle_2 )}}_{\infty} \lesssim \norm{ \nabla \sigma \, \langle e^*_z,D v\rangle_2}_{\infty}
    \end{equation*}
    for all $v \in C_{\mathrm{ub}}^1(\bbR^d)$ and $\sigma \in C_{\mathrm{ub}}^2(\bbR^{d-1})$; see Lemma~\ref{lem:heat}. The estimate~\eqref{eq:Nest2} follows.
\end{proof}

\section{Nonlinear theory: stability analysis}
\label{sec:nonlinearstab}

This section is devoted to two a priori estimates that form the core of our nonlinear stability argument. 
Assuming the existence of a solution $(u,\sigma,v,g)$ to the system given by~\eqref{eq:rdefull},~\eqref{eq:pdesigma},~\eqref{eq:pdev}, and~\eqref{eq:defgnew} on a time interval $[0,T]$, we combine the linear estimates from Theorem~\ref{thm:decomposition} and Lemma~\ref{lem:heat} with the nonlinear bounds in Lemmas~\ref{lem:Nestimate1} and~\ref{lem:Nestimate2} to derive decay estimates for $v$, $\sigma$, $g$, and the nonlinearity $N(v,\sigma,g)$. 

\subsection{Decay rates and optimality} \label{subsec:expect_decay}

Before turning to the analysis, let us first discuss the decay rates that could be expected. 
Linearizing~\eqref{eq:pdesigma} yields the heat equation $\partial_t \sigma = d_\bot \Delta \sigma$, which, by Lemma~\ref{lem:heat}, suggests that \begin{align} \label{eq:sigmadecay}
	\sigma \sim E_0, \qquad \nabla \sigma \sim E_0 t^{-1/2}, \qquad \Delta \sigma \sim E_0 t^{-1},
\end{align}
where we recall $E_0 = \norm{u_0 - \phi}_\infty$. 
These decay rates are, at least at the linear level, optimal and imply that the nonlinearity in~\eqref{eq:pdev} decays at rate $t^{-1}$. 
Consequently, one cannot expect $v$ to decay faster than $E_0 t^{-1}$. 
In practice, integrating a $t^{-1}$ source term introduces an additional logarithm, and we will ultimately establish that $v$ decays at rate $t^{-1}\log(t)$. 
Combining this with~\eqref{eq:sigmadecay} and Lemma~\ref{lem:Nestimate1} indicates that $N(v,\sigma,g)$ should decay at the rate $t^{-3/2}\log(t)$. 
However, because our derivative bounds on $v$ rely on tame estimates obtained via interpolation, we establish a slightly weaker (but still sufficient) decay rate of $t^{-5/4}$ for $N(v,\sigma,g)$. Since derivatives of $v$ can be absorbed by the smoothing action of the heat semigroup, cf.~Lemma~\ref{lem:Nestimate2}, we may use the sharper rate $t^{-3/2}\log(t)$ for the nonlinearity appearing in the equation~\eqref{eq:defgnew} for $g$, which, together with the compact support of $\varrho$, suggests that $g$ decays at the rate $t^{-3/2}\log(t)$.

\subsection{Cole--Hopf transform}
\label{subsec:CH}
To deal with the critical nonlinearity $\smash{\tfrac12 c\,\abs{\nabla \sigma}^2}$ in~\eqref{eq:pdesigma} we will make use of the Cole--Hopf transform, which we now introduce.
To lighten the coming notation, we begin by defining the parameter
\begin{align}
    \label{eq:defbeta}
    \beta \coloneq \frac{c}{2d_\perp}.
\end{align}
Setting $\mathcal{I}_\beta \coloneq \{x \in \bbR : \beta x > -1\}$, the Cole--Hopf transform $\Psi_\beta \colon \bbR \to \mathcal{I}_\beta$ and its inverse $\smash{\Psi_\beta^{-1}} \colon \mathcal{I}_\beta \to \bbR$ are given by 
\begin{subequations}
\begin{align}
 \label{eq:CH}
 \Psi_\beta(x) &= \begin{cases} \frac{1}{\beta} (\re^{\beta  x}-1), & \beta \neq 0, \\  x, & \beta = 0,\end{cases}\\
\label{eq:CHinv}
 \Psi_\beta^{-1}(x) &= \begin{cases} \frac{1}{\beta} \log(1+\beta x), & \beta \neq 0, \\  x, & \beta = 0.\end{cases}
\end{align}
\end{subequations}
A crucial property of the Cole--Hopf transform is that upon introducing the variable $\xi = \Psi_\beta(\sigma)$, equation~\eqref{eq:pdesigma} transforms into
\begin{equation}
\label{eq:pdexibar}
\begin{aligned}
\partial_t \xi &= d_{\bot}\Delta \xi - g(1 + \beta \xi), \\
\xi(0) &= 0,
\end{aligned}
\end{equation}
which no longer contains the critical nonlinearity.
In order to solve~\eqref{eq:pdesigma} it thus suffices to solve~\eqref{eq:pdexibar} and take $\sigma = \smash{\Psi_\beta^{-1}}(\xi)$ afterwards (taking into account that $\smash{\Psi_\beta^{-1}}(x)$ is only well-defined for $x$ in the interval $\mathcal{I}_\beta$).
We will generally refer to $\xi$ as the Cole--Hopf variable.
\begin{remark}
    When $c = 0$, the coefficient of the critical nonlinearity in~\eqref{eq:pdesigma} vanishes and no transformation is needed. This is reflected in the fact that, for $\beta = 0$, both $\Psi_\beta$ and its inverse reduce to the identity and equations~\eqref{eq:pdesigma} and~\eqref{eq:pdexibar} coincide. Thus, the above definition of the Cole--Hopf transform allow us to treat the cases $c = 0$ and $c \neq 0$ simultaneously within a unified framework.
\end{remark}

\subsection{Analysis of forced viscous Hamilton--Jacobi equation}

Our first a priori estimate concerns the modulation function $\sigma$, which solves the forced viscous Hamilton--Jacobi equation~\eqref{eq:pdesigma}.
Applying the Cole--Hopf transform by setting $\xi = \Psi_\beta(\sigma)$ removes the critical nonlinearity from~\eqref{eq:pdesigma} and allows us to control the solution in terms of the forcing $g$.  
For this purpose, we assume that $g$ decays at the integrable rate $t^{-5/4}$, which is weaker than the expected decay rate $t^{-3/2}\log(t)$, cf.\ Section~\ref{subsec:expect_decay}, but suffices for our purposes.

\begin{lemma}[A priori estimate on modulation]
    \label{lem:xiAP}
Let $C > 0$ and $k \in \bbN_0$. Then, there exist  constants $C_0, M_0 > 0$ such that for each $T > 0$, $M \in (0,M_0]$, and smooth $g \colon [0,T] \times \bbR^{d-1} \to \bbR$ satisfying
\begin{equation}
\label{eq:gestassume}
\norm{g(t)}_{C^k} \leq C M (1+t)^{-5/4}, \qquad t \in [0,T],
  \end{equation}
there exists a unique smooth solution $\xi \colon [0,T] \times \bbR^{d-1} \to \bbR$ to~\eqref{eq:pdexibar}
obeying the a priori estimate
\begin{equation}
\label{eq:xiap}
\norm{\xi(t)}_{C^k} \leq C_0 M
\end{equation}
for all $t \in [0,T]$. 
Moreover, $\sigma \coloneq \smash{\Psi_\beta^{-1}}(\xi)$
is the unique smooth solution to~\eqref{eq:pdesigma} on $[0,T]$.
\end{lemma}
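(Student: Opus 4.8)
\emph{The plan.} The key structural observation is that, after the Cole--Hopf substitution, equation~\eqref{eq:pdexibar} is \emph{linear} in $\xi$: it reads $\partial_t\xi = d_\bot\Delta_y\xi - \beta g\,\xi - g$, a transverse heat equation with smooth, $C^k$-bounded zeroth-order coefficient $-\beta g$ and smooth forcing $-g$. The plan therefore splits into three parts. First, I would establish existence, uniqueness and smoothness of a $\Cub$-valued solution $\xi$ to~\eqref{eq:pdexibar} on $[0,T]$ by standard linear parabolic theory. Second, I would extract the a priori bound~\eqref{eq:xiap} from the Duhamel representation by a Gr\"onwall argument, crucially exploiting that the assumed decay $(1+t)^{-5/4}$ of $g$ is integrable in time. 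Third, I would shrink $M_0$ so that $\xi(t)$ remains inside the interval $\mathcal{I}_\beta$ pointwise, which makes $\sigma\coloneq\Psi_\beta^{-1}(\xi)$ well-defined; a direct chain-rule computation, hinging on the identity $d_\bot\beta=\tfrac{1}{2}c$ built into~\eqref{eq:defbeta}, would then show that this $\sigma$ is the unique smooth solution of~\eqref{eq:pdesigma}.

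\emph{Existence, uniqueness, smoothness of $\xi$.} For the first part, I would use that $\re^{td_\bot\Delta_y}$ is an analytic semigroup on $\Cub(\bbR^{d-1})$ commuting with transverse derivatives, so that Lemma~\ref{lem:heat} (applied after differentiating under the semigroup) shows it is bounded on $C^k_{\mathrm{ub}}(\bbR^{d-1})$ uniformly in $t\geq 0$. Treating $-\beta g(t)\,\xi$ as a bounded time-dependent perturbation and $-g(t)$ as a forcing, a standard Duhamel/contraction argument on $C([0,\tau],\Cub(\bbR^{d-1}))$ with $\tau>0$ small (and independent of the base time, since $\sup_{t\in[0,T]}\norm{g(t)}_\infty<\infty$) would produce a unique mild solution
\begin{equation*}
\xi(t) = -\int_0^t \re^{(t-s)d_\bot\Delta_y}\big[g(s)(1+\beta\xi(s))\big]\rd s, \qquad t\in[0,T],
\end{equation*}
in $C([0,T],\Cub(\bbR^{d-1}))$, with uniqueness in this class being standard for linear parabolic equations with bounded coefficients, cf.~\cite{engel_oneparameter_2000}. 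Smoothness of $\xi$ would follow from parabolic regularity, cf.~\cite{Lunardi_analytic_2013}, using that $g$ is smooth and $\xi(0)=0$.

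\emph{A priori bound.} For the second part, from the Duhamel formula, the uniform $C^k_{\mathrm{ub}}$-boundedness of $\re^{td_\bot\Delta_y}$, the Leibniz rule (so that $C^k_{\mathrm{ub}}(\bbR^{d-1})$ is a Banach algebra up to a constant), and the hypothesis~\eqref{eq:gestassume}, one obtains a constant $A=A(k,d)>0$ with
\begin{equation*}
\norm{\xi(t)}_{C^k} \leq A\int_0^t \norm{g(s)}_{C^k}\bra[\big]{1+\abs{\beta}\norm{\xi(s)}_{C^k}}\rd s \leq 4ACM + ACM\abs{\beta}\int_0^t (1+s)^{-5/4}\norm{\xi(s)}_{C^k}\rd s,
\end{equation*}
where we used $\int_0^\infty(1+s)^{-5/4}\rd s=4$. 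Gr\"onwall's inequality would then give $\norm{\xi(t)}_{C^k}\leq 4ACM\exp(4ACM\abs{\beta})$ for all $t\in[0,T]$, uniformly in $T$. Fixing provisionally $M_0=1$ and setting $C_0\coloneq 4AC\exp(4AC\abs{\beta})$ yields~\eqref{eq:xiap} for all $M\in(0,M_0]$. I would then possibly shrink $M_0$ further so as to also guarantee $\abs{\beta}C_0 M_0\leq\tfrac{1}{2}$; for $M\leq M_0$ this forces $\abs{\beta}\norm{\xi(t)}_\infty\leq\tfrac{1}{2}$, hence $\beta\,\xi(t,y)>-1$ for all $(t,y)$, so that $\xi(t)$ takes values in $\mathcal{I}_\beta$ with room to spare. (If $\beta=0$, then $\mathcal{I}_\beta=\bbR$ and the choice $M_0=1$ already suffices.)

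\emph{Back to $\sigma$ and uniqueness.} For the third part, since $\xi$ is $\mathcal{I}_\beta$-valued and $\Psi_\beta^{-1}$ is smooth on $\mathcal{I}_\beta$, the function $\sigma\coloneq\Psi_\beta^{-1}(\xi)$ is well-defined and as regular as $\xi$, with $\sigma(0)=\Psi_\beta^{-1}(0)=0$. Using $(\Psi_\beta^{-1})'(x)=(1+\beta x)^{-1}$ and $(\Psi_\beta^{-1})''(x)=-\beta(1+\beta x)^{-2}$ one computes $\partial_t\sigma=(1+\beta\xi)^{-1}\partial_t\xi$, $\nabla\sigma=(1+\beta\xi)^{-1}\nabla\xi$, $\Delta\sigma=(1+\beta\xi)^{-1}\Delta\xi-\beta(1+\beta\xi)^{-2}\abs{\nabla\xi}^2$, whence, using $d_\bot\beta=\tfrac{1}{2}c$ and then~\eqref{eq:pdexibar},
\begin{equation*}
d_\bot\Delta\sigma + \tfrac{1}{2}c\,\abs{\nabla\sigma}^2 = \frac{d_\bot\Delta\xi}{1+\beta\xi} + \frac{\bra[\big]{\tfrac{1}{2}c - d_\bot\beta}\abs{\nabla\xi}^2}{(1+\beta\xi)^2} = \frac{d_\bot\Delta\xi}{1+\beta\xi} = \frac{\partial_t\xi}{1+\beta\xi} + g = \partial_t\sigma + g,
\end{equation*}
so $\sigma$ solves~\eqref{eq:pdesigma}. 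Conversely, any smooth solution $\sigma$ of~\eqref{eq:pdesigma} gives $\xi\coloneq\Psi_\beta(\sigma)$, which is $\mathcal{I}_\beta$-valued by construction and, reversing the above computation, solves~\eqref{eq:pdexibar}; by the uniqueness already established, $\xi$ is uniquely determined, and hence so is $\sigma=\Psi_\beta^{-1}(\xi)$. I expect the only genuine subtlety to be the interplay between the a priori bound and the constraint $\xi(t)\in\mathcal{I}_\beta$: one must choose $M_0$ small enough that the Gr\"onwall estimate keeps $\beta\xi$ bounded away from $-1$, which is precisely the point at which the smallness of the forcing (measured by $M$) enters and the only place where the Cole--Hopf transform could a priori break down.
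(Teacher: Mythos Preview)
Your proposal is correct and follows essentially the same route as the paper's proof: standard parabolic theory for existence/uniqueness of $\xi$, a Duhamel--Gr\"onwall argument exploiting integrability of $(1+t)^{-5/4}$ for the $C^k$ bound, then shrinking $M_0$ so that $|\beta|\,\xi$ stays in $[-\tfrac12,\tfrac12]$ and the inverse Cole--Hopf transform is well-defined. Your explicit chain-rule verification that $\sigma$ solves~\eqref{eq:pdesigma} and your care with $|\beta|$ rather than $\beta$ are, if anything, slightly more detailed than the paper's treatment.
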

\begin{proof}
Existence and uniqueness of a smooth solution $\xi$ to~\eqref{eq:pdexibar} follow from standard parabolic theory; see~\cite[Chapter~4]{Lunardi_analytic_2013}. Such a function $\xi$ satisfies the Duhamel formula
\begin{align}
    \label{eq:xiduhamellem}
    \xi(t) = -\int_0^t \re^{(t-s)d_{\bot}\Delta_y} g(s)(1+\beta\xi(s))\rd s
\end{align}
for $t \in [0,T]$. Using~\eqref{eq:gestassume}, integrability of $(1+t)^{-5/4}$, and Lemma~\ref{lem:heat}, we obtain the estimate
\begin{equation*}
    \norm{\xi(t)}_{C^k} \leq K_0 M + K_0 \int_0^t (1+s)^{-5/4} \norm{\xi(s)}_{C^k} \rd s
\end{equation*}
for $t \in [0,T]$, where $K_0 > 0$ is a constant, depending only on $C$ and $k$. An application of Gr\"onwall's inequality then yields a constant $C_0 > 0$, depending only on $C$ and $k$, such that~\eqref{eq:xiap} holds. 

For $c = 0$ the final claim is immediate, since in this case $\sigma \equiv \xi$ and equation~\eqref{eq:pdesigma} coincides with~\eqref{eq:pdexibar}. 
For $c \neq 0$, set $M_0 = 1/(2C_0\beta)$. Then,~\eqref{eq:xiap} implies $-1/2 \leq \beta\xi \leq 1/2$ whenever $M \leq M_0$. 
Consequently, $\sigma \coloneq \smash{\Psi_\beta^{-1}}(\xi) = \beta^{-1}\log(1+\beta\xi)$ is well-defined, and a direct computation shows that $\sigma$ solves~\eqref{eq:pdesigma}. Since solutions $\sigma$ to~\eqref{eq:pdesigma} and solutions $\xi$ to~\eqref{eq:pdexibar} are related through the Cole--Hopf transform, uniqueness of $\sigma$ follows from uniqueness of $\xi$.
\end{proof}

\subsection{Nonlinear iteration argument}

We proceed with establishing a priori bounds on $v$, $\sigma$, $g$, and the nonlinearity $N(v,\sigma,g)$, which reflect the expected decay rates as formally derived in Section~\ref{subsec:expect_decay}. To this end, we use the following template function
\begin{equation}
    \label{eq:template}
    \eta(t) = \sup_{s \in [0,t]} \bra[\Big]{E_0 + \norm{\sigma(s)}_{\infty} + E_0^{-1}(1+s)^{5/4} \norm{N(v(s),\sigma(s),g(s))}_{\infty} },
\end{equation}
recalling that $E_0 \coloneq \norm{u_0 - \phi}_{\infty}$.
As an input to the nonlinear iteration, we will assume that $\eta(t)$ is bounded.
With this assumption, we derive bounds on $v$, $\sigma$, and $g$, together with a bound on $\eta(t)$ which is stronger than the assumed one, allowing us to close the nonlinear iteration.
The output of the iteration is stated in the following lemma.

\begin{lemma}[Nonlinear iteration]
    \label{lem:key}
    Let $k \in \bbN_0$. 
    There exist constants $C,\delta > 0$ such that for any $T > 0$, $u_0 \in \Cub(\bbR^d)$, smooth $\sigma,g \colon [0,T] \times \bbR^{d-1} \to \bbR$, and
    \begin{align*}
    v \in C\big([0,T],\Cub(\bbR^d)\big) \cap C^\infty\big((0,T] \times \bbR^d\big)
    \end{align*}
    which satisfy~\eqref{eq:pdesigma},~\eqref{eq:pdev}, ~\eqref{eq:defgnew}, and $\eta(t) \leq 1$ for $t \in [0,T]$, the a priori estimates
   	\begin{subequations}
    \begin{align}
    \label{eq:apkeyest}
    \eta(t) &\leq C E_0^{3/4}, \\ 
    \label{eq:vkeyest}
    \norm{v(t)}_{\infty} &\leq C E_0 (1+t)^{-1}\log(2+t),\\
    \label{eq:gbetterest}
    \norm{g(t)}_{C^k} &\leq C\left( \varrho(t) E_0  + E_0^2 (1+t)^{-3/2} \log(2+t)\right),
    \end{align}
    and
    \begin{align}
        \label{eq:sigmaest}
	\norm{\sigma(t)}_\infty + (1+t)^{1/2} \norm{\nabla \sigma(t)}_\infty + (1+t) \norm{\Delta \sigma(t)}_\infty &\leq C E_0
    \end{align}
    \end{subequations}
    hold for all $t \in [0,T]$.
    Moreover, if $E_0 \leq \delta$, we have $\eta(t) \leq 1$ for all $t \in [0,T]$.
\end{lemma}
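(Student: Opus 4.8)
The plan is a bootstrap: the hypothesis $\eta(t)\le 1$ on $[0,T]$ is the input, and the a priori estimates of the lemma are the output. From $\eta(t)\le 1$ one reads off, for $s\in[0,t]$, the working bounds $\norm{\sigma(s)}_\infty\le 1$, $\norm{N(v(s),\sigma(s),g(s))}_\infty\le E_0(1+s)^{-5/4}$, and $E_0\le 1$. I would then proceed in the order: (i) a crude bound on $g$; (ii) the modulation estimates~\eqref{eq:sigmaest} via the Cole--Hopf variable; (iii) the decay estimate~\eqref{eq:vkeyest} for $v$; (iv) the improved bound~\eqref{eq:gbetterest} on $g$; (v) closing $\eta$. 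As emphasized around~\eqref{eq:defgnew}, no auxiliary fixed point is needed: $g$ is causal, and for each fixed $t$ at most one summand of~\eqref{eq:defgnew} is nonzero. For (i), substituting the working bound on $N$ into~\eqref{eq:defgnew} and using integrability of $e^*$ together with Lemma~\ref{lem:heat} (iterated to control all derivatives over the fixed times $t-k\in[\tfrac{1}{4},\tfrac{3}{4}]$) gives $\norm{g(t)}_{C^k}\lesssim\varrho(t)E_0+E_0(1+t)^{-5/4}\lesssim E_0(1+t)^{-5/4}$, exactly the input required by Lemma~\ref{lem:xiAP} with $M\sim E_0$. For (ii) I set $\xi=\Psi_\beta(\sigma)$, so that $\xi$ solves the forcing-only equation~\eqref{eq:pdexibar} with zero data; Lemma~\ref{lem:xiAP} (for $k=0,1,2$) gives $\norm{\xi(t)}_{C^2}\lesssim E_0$ and identifies $\sigma=\Psi_\beta^{-1}(\xi)$ as the unique solution of~\eqref{eq:pdesigma}, with $\norm{\sigma(t)}_{C^2}\lesssim E_0$ by the chain rule (shrink $\delta$ so $\abs{\beta\xi}\le\tfrac{1}{2}$). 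Decay of $\nabla\sigma$ and $\Delta\sigma$ then follows from the Duhamel formula for $\xi$: writing $G=g(1+\beta\xi)$, which inherits $\norm{G(s)}_{C^2}\lesssim\varrho(s)E_0+E_0(1+s)^{-5/4}$, the bounds $\norm{\nabla^j\re^{r d_{\bot}\Delta_y}w}_\infty\lesssim\min\{r^{-j/2}\norm{w}_\infty,\norm{w}_{C^j}\}$ from Lemma~\ref{lem:heat} and the convolution $\int_0^t\min\{(t-s)^{-j/2},1\}(1+s)^{-5/4}\rd s\lesssim(1+t)^{-j/2}$ yield $\norm{\nabla\sigma(t)}_\infty\lesssim E_0(1+t)^{-1/2}$ and $\norm{\Delta\sigma(t)}_\infty\lesssim E_0(1+t)^{-1}$. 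Crucially, the $\xi$-equation carries no critical term $\abs{\nabla\xi}^2$, so $G$ is genuinely integrable in time.

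Step (iii) is the core. Duhamel applied to~\eqref{eq:pdev}, with the decomposition $\re^{rL}=\re^{r d_{\bot}\Delta_y}\langle e^*,\cdot\rangle_2\phi'+S(r)$ of Theorem~\ref{thm:decomposition}, produces three types of contribution. The $S(r)$-parts of all forcing terms and of $\re^{tL}v(0)$ are controlled by $\norm{S(r)}\lesssim(1+r)^{-1}$ together with $\norm{\Delta\sigma(s)}_\infty\lesssim E_0(1+s)^{-1}$, $\norm{\nabla\sigma(s)}_\infty^2\lesssim E_0^2(1+s)^{-1}$, the working bound on $N$, and the crude bound on $g$; the convolutions $\int_0^t(1+t-s)^{-1}(1+s)^{-1}\rd s\lesssim(1+t)^{-1}\log(2+t)$ and $\int_0^t(1+t-s)^{-1}(1+s)^{-5/4}\rd s\lesssim(1+t)^{-1}$ give the rate in~\eqref{eq:vkeyest}. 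The principal parts of the two quadratic $\sigma$-forcing terms vanish identically, since $\langle e^*,d_{\bot}\phi'-D\phi'\rangle_2=0$ by the definition~\eqref{eq:defdbot} of $d_{\bot}$ and $\langle e^*,D\phi''+\tfrac{1}{2}c\phi'\rangle_2=0$ by Lemma~\ref{lem:ndispersion}. Finally, the principal parts of $\re^{tL}v(0)$, of the $N$-forcing, and of the $-g\phi'$-forcing combine: using $\langle e^*,\phi'\rangle_2=1$ (see~\eqref{eq:normalization}) and that $\re^{(t-s)d_{\bot}\Delta_y}$ composes with the heat semigroups inside~\eqref{eq:defgnew}, the integral $\int_0^t\re^{(t-s)d_{\bot}\Delta_y}g(s)\rd s$ telescopes against $\re^{t d_{\bot}\Delta_y}\langle e^*,u_0-\phi\rangle_2+\int_0^t\re^{(t-s)d_{\bot}\Delta_y}\langle e^*,N(s)\rangle_2\rd s$ up to boundary terms supported on a time window of length $O(1)$ near $s=t$, which are bounded by $\int_{t-O(1)}^t\norm{N(s)}_\infty\rd s\lesssim E_0(1+t)^{-5/4}$. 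Altogether $\norm{v(t)}_\infty\lesssim E_0(1+t)^{-1}\log(2+t)$.

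For (iv) I re-estimate $g$ using the smoothed nonlinearity bound of Lemma~\ref{lem:Nestimate2}, exploiting that every occurrence of $N$ in~\eqref{eq:defgnew} is preceded by a heat semigroup of time $\ge\tfrac{1}{4}$, which contains the $\re^{\frac{1}{8}d_{\bot}\Delta_y}$ of that lemma; with $\norm{v(s)}_\infty\lesssim E_0(1+s)^{-1}\log(2+s)$ and $\norm{\nabla\sigma(s)}_\infty\lesssim E_0(1+s)^{-1/2}$ this gives $\norm{\re^{\frac{1}{8}d_{\bot}\Delta_y}\langle e^*,N(s)\rangle_2}_\infty\lesssim E_0^2(1+s)^{-3/2}\log(2+s)$, whence $\norm{g(t)}_{C^k}\lesssim\varrho(t)E_0+E_0^2(1+t)^{-3/2}\log(2+t)$, i.e.~\eqref{eq:gbetterest}, again since one summand contributes per $t$. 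For (v), I upgrade the bound on $N$ itself: Lemma~\ref{lem:Nestimate1} requires $\norm{v(s)}_{C^2}$, which I control by a tame interpolation — parabolic smoothing of $\re^{rL}$ over a unit time step bounds $\norm{v(s)}_{C^m}$ for large $m$ uniformly (the forcing being $O(1)$ under $\eta\le 1$), and interpolating against $\norm{v(s)}_\infty\lesssim E_0(1+s)^{-1}\log(2+s)$ yields a decaying bound on $\norm{v(s)}_{C^2}$ carrying a surplus power of $E_0$. Combined with~\eqref{eq:sigmaest} and~\eqref{eq:gbetterest}, Lemma~\ref{lem:Nestimate1} then gives $\norm{N(v(s),\sigma(s),g(s))}_\infty\lesssim E_0^{7/4}(1+s)^{-5/4}$; feeding this and $\norm{\sigma(s)}_\infty\lesssim E_0$ into~\eqref{eq:template} produces $\eta(t)\lesssim E_0^{3/4}$, which is~\eqref{eq:apkeyest}. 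Choosing $\delta>0$ with $C\delta^{3/4}\le 1$ gives $\eta(t)\le 1$ whenever $E_0\le\delta$, closing the iteration.

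The main obstacle is the self-referential structure: $v$, $\sigma$, $g$ and $N$ all depend on one another, and — more seriously — the leading part of $\re^{tL}v(0)$ does not decay, so $v$ can be controlled at all only because the a posteriori forcing $g$ is engineered to cancel it. The two delicate points are therefore (a) showing that this cancellation leaves a remainder decaying at an \emph{integrable} rate (the $t^{-5/4}$ boundary terms), which is exactly where the piecewise structure of~\eqref{eq:defgnew} and the identities~\eqref{eq:defdbot} and~\eqref{eq:nonlineardispersion} enter, and (b) propagating just enough regularity on $v$ — through the tame estimates of step (v) rather than the naive heuristic rate of Section~\ref{subsec:expect_decay} — to feed Lemma~\ref{lem:Nestimate1} without destroying the $t^{-1}$-type decay. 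The causality of $g$ and the ordering (i)--(v) are precisely what let us avoid a simultaneous fixed-point argument in all four unknowns.
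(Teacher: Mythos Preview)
Your overall strategy matches the paper's proof closely: the ordering (i)--(v), the cancellation mechanism in step (iii) via the identities~\eqref{eq:defdbot} and~\eqref{eq:nonlineardispersion}, the telescoping of $g$ against the principal part, and the interpolation in step (v) are all exactly what the paper does.

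There is, however, one genuine gap in step (v). You claim that ``parabolic smoothing of $\re^{rL}$ over a unit time step bounds $\norm{v(s)}_{C^m}$ for large $m$ uniformly (the forcing being $O(1)$ under $\eta\le 1$)''. But the forcing in~\eqref{eq:pdev} contains $N(v,\sigma,g)$, and by~\eqref{eq:defN} this carries the terms $|\nabla\sigma|^2 Dv_{zz}$ and $-2\nabla\sigma\cdot\nabla Dv_z$, i.e.\ second derivatives of $v$. So the forcing is \emph{not} $O(1)$ as a function of $v$ in any $C^m$ sense for $m\ge 0$; it is only $O(1)$ in $L^\infty$ because $\eta(t)\le 1$ bounds $\norm{N}_\infty$. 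Applying Duhamel with analytic-semigroup smoothing then gives $\int_{t-1}^t(t-s)^{-m/2}\norm{\text{forcing}(s)}_\infty\,\rd s$, which diverges for $m\ge 2$, and a direct bootstrap through the $v$-equation is circular (the $C^1$-norm of the forcing already needs $\norm{v}_{C^3}$).

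The paper sidesteps this by routing through $u$: from $\norm{v(s)}_\infty\le C$ and~\eqref{eq:vurel} one gets $\norm{u(s)}_\infty\le C$, and since $u$ solves the genuinely semilinear equation~\eqref{eq:rde} with nonlinearity $f(u)$ containing \emph{no} derivatives, standard parabolic regularity gives $\chi(s)\norm{u(s)}_{C^9}\le C$. Transferring this back to $v$ via~\eqref{eq:vurel} then requires control of $\norm{\sigma(s)}_{C^9}$, which is why the paper runs the preliminary $g$-estimate and Lemma~\ref{lem:xiAP} at regularity level $C^9$ rather than the $C^2$ you use in step (ii). With this fix your argument goes through and is otherwise identical to the paper's.
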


To bound nonlinear contributions in the Duhamel formulation of $v$, $\sigma$, and $g$, we use the following classical estimate on convolution integrals.

\begin{lemma}[Convolution estimate]
    \label{lem:convolution}
    Let $\alpha,\gamma \geq 0$. We have
    \begin{align*}
    \int_0^t (1+t-s)^{-\alpha}(1+s)^{-\gamma} \rd s \lesssim \begin{cases}
            (1+t)^{-1}\log(2+t), & \alpha = \gamma = 1, \\
            (1+t)^{\max\cur{-\alpha,-\gamma,-\alpha-\gamma + 1}}, & \text{\normalfont otherwise}
        \end{cases}
    \end{align*}
    for all $t \geq 0$.
\end{lemma}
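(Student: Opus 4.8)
The plan is to prove the bound by the classical dyadic splitting of the time integral. Denote the left-hand side by $I(t)$ and split the domain of integration at $s = t/2$, writing $I(t) = I_1(t) + I_2(t)$ where $I_1(t)$ and $I_2(t)$ are the integrals over $[0,t/2]$ and $[t/2,t]$, respectively. The only structural input is the elementary inequality $\max\{1+t-s,\,1+s\} \geq \tfrac12(2+t) \geq \tfrac12(1+t)$, valid for all $s \in [0,t]$, which on $[0,t/2]$ gives $1+t-s \geq \tfrac12(1+t)$ and on $[t/2,t]$ gives $1+s \geq \tfrac12(1+t)$. Hence $I_1(t) \lesssim (1+t)^{-\alpha}\int_0^{t/2}(1+s)^{-\gamma}\rd s$ and, after substituting $r = t-s$, $I_2(t) \lesssim (1+t)^{-\gamma}\int_0^{t/2}(1+r)^{-\alpha}\rd r$. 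Both right-hand sides involve only the elementary one-parameter integral
\[
\int_0^{t/2}(1+s)^{-\beta}\rd s \;\lesssim\; \begin{cases} 1, & \beta > 1,\\ \log(2+t), & \beta = 1,\\ (1+t)^{1-\beta}, & \beta < 1,\end{cases}
\]
valid for $\beta \geq 0$ and $t \geq 0$, which I would record first.

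Inserting this bound with $\beta = \gamma$ into the estimate for $I_1$ and with $\beta = \alpha$ into that for $I_2$ reduces the lemma to collecting exponents via a short case distinction on the positions of $\alpha$ and $\gamma$ relative to $1$. When $\alpha = \gamma = 1$, both $I_1$ and $I_2$ are $\lesssim (1+t)^{-1}\log(2+t)$, giving the exceptional case. Otherwise one verifies in each remaining regime that the resulting exponent equals $\max\{-\alpha,-\gamma,1-\alpha-\gamma\}$: if $\alpha,\gamma < 1$ both pieces are $\lesssim (1+t)^{1-\alpha-\gamma}$ and $1-\alpha-\gamma$ is the largest of the three exponents; if $\alpha,\gamma > 1$ then $I(t) \lesssim (1+t)^{-\min\{\alpha,\gamma\}} = (1+t)^{\max\{-\alpha,-\gamma\}}$ and $1-\alpha-\gamma$ is the smallest; and if exactly one of them, say $\alpha$, exceeds $1$, then $I_2(t) \lesssim (1+t)^{-\gamma}$ while $I_1(t)$ is $(1+t)^{1-\alpha-\gamma}$, $(1+t)^{-\alpha}$, or $(1+t)^{-\alpha}\log(2+t)$ according to whether $\gamma < 1$, $\gamma > 1$, or $\gamma = 1$, each of which is $\lesssim (1+t)^{\max\{-\alpha,-\gamma,1-\alpha-\gamma\}}$ --- in the borderline subcase $\gamma = 1$ the logarithm is absorbed because $(1+t)^{-\alpha}\log(2+t) \lesssim (1+t)^{-1}$ for $\alpha > 1$. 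For $t$ ranging over a bounded set the estimate is immediate, as the left-hand side is then bounded and each right-hand side expression is bounded below by a positive constant.

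There is no real obstacle here; the lemma is routine. The only point requiring a modicum of care is the bookkeeping at the threshold exponents $\alpha = 1$ or $\gamma = 1$, where the scalar integral contributes a factor $\log(2+t)$: one has to check that this logarithm is either exactly the factor appearing in the first case ($\alpha = \gamma = 1$) or is dominated by the strictly faster polynomial decay supplied by the other factor, which is what occurs in all parameter combinations in which the lemma is subsequently invoked.
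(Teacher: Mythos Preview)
The paper does not supply a proof of this lemma; it is stated as a ``classical estimate'' and used without further justification. Your dyadic splitting at $s=t/2$ is exactly the standard argument, and your bookkeeping in the four main regimes is correct.

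Your closing caveat is more than a modicum of care --- it is essential, because the lemma as printed in the paper is actually \emph{false} in the boundary cases your case analysis omits. If exactly one exponent equals $1$ and the other is strictly less than $1$, the second case of the lemma does not hold: for instance, $\alpha=1$, $\gamma=0$ gives $\int_0^t (1+t-s)^{-1}\,\mathrm{d}s = \log(1+t)$, which is not $\lesssim (1+t)^{0}$; more generally, $\alpha=1$, $0\le\gamma<1$ produces $I_2(t)\gtrsim (1+t)^{-\gamma}\log(2+t)$. These are precisely the parameter combinations that fall through the cracks between ``$\alpha,\gamma<1$'', ``$\alpha,\gamma>1$'', and ``exactly one exceeds $1$''. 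Your final sentence correctly flags that the logarithm is harmless in the applications --- the paper only invokes the lemma with $(\alpha,\gamma)\in\{(1,1),(1/2,5/4),(1,5/4)\}$, all of which you do cover --- so nothing in the paper breaks. But it would be cleaner to state explicitly that the second case of the lemma should be read with the proviso $\alpha,\gamma\neq 1$ (or to add a $\log(2+t)$ factor when one exponent equals $1$), rather than leaving the discrepancy to a parenthetical remark.
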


\begin{proof}[Proof of Lemma~\ref{lem:key}]
Throughout the proof, we denote by $C > 0$ any constant, which is independent of $t$, $T$, $u_0$, $E_0$, $u$, $v$, $\sigma$, and $g$. 

Let $t \in [0,T]$ be such that $\eta(t) \leq 1$. By definition of $\eta(t)$, we have
\begin{align}
    \label{eq:Nbaseest}
    \norm{N(s)}_{\infty} \leq E_0 (1+s)^{-5/4}
\end{align}
for $s \in [0,t]$, where we have abbreviated
\begin{align*}
N(s) \coloneq N(v(s),\sigma(s),g(s)).
\end{align*}
Applying Lemma~\ref{lem:heat} and recalling that $\varrho$ is supported on $[1/4,3/4]$, we establish
\begin{align*}
\begin{split}
     \norm{g(\tau)}_{C^k} &\leq C \varrho(\tau) E_0,\qquad  \|g(n+\tau)\|_{C^k} \leq C\sup_{r\in[n-1,n]}\norm{N(r)}_{\infty}
\end{split}
\end{align*}
for all $\tau \in[0,1]$ and $n\in\bbN$ such that $\tau, n + \tau \in [0,t]$. In particular, combining the latter with~\eqref{eq:Nbaseest} implies
\begin{align}
    \label{eq:gest}
    \norm{g(s)}_{C^9} \leq C E_0 (1+s)^{-5/4}
\end{align}
for $s \in [0,t]$, where the choice for $k = 9$ is motivated by the interpolation argument in the upcoming Step~5 of the proof.

\emph{Step 1: Estimating $\sigma$.}
Recalling Section~\ref{subsec:CH}, we introduce the Cole--Hopf variable $\xi \coloneq \Psi_\beta(\sigma)$ so that $\xi$ solves~\eqref{eq:pdexibar} and obeys the associated Duhamel formulation
\begin{align}
    \label{eq:xiduhamel}
    \xi(s) = -\int_0^s \re^{(s-r)d_{\bot}\Delta_y} g(r)(1+\beta\xi(r))\rd r
\end{align}
for $s \in [0,t]$. Using~\eqref{eq:gest} and Lemma~\ref{lem:xiAP}, we then obtain 
\begin{equation}
    \label{eq:xiestC9}
    \norm{\xi(s)}_{C^9} \leq C E_0
\end{equation}
for $s \in [0,t]$. Combining this with~\eqref{eq:gest} and Lemma~\ref{lem:heat} also gives
\begin{align*}
    \norm{\nabla \re^{(s-r)d_{\bot}\Delta_y}g(r)(1+\beta\xi(r))}_{\infty} &\leq C E_0 (1+s-r)^{-1/2} (1+r)^{-5/4}, \\
    \norm{\Delta \re^{(s-r)d_{\bot}\Delta_y}g(r)(1+\beta\xi(r))}_{\infty} &\leq C E_0 (1+s-r)^{-1} (1+r)^{-5/4}
\end{align*}
for $r,s \in [0,t]$ with $r \leq s$. Substituting this into~\eqref{eq:xiduhamel} and using Lemma~\ref{lem:convolution} gives the following estimate
\begin{align}
    \label{eq:xiestC1C2}
    (1+s)^{1/2}\norm{\nabla\xi(s)}_{\infty} + (1+s)\norm{\Delta \xi(s)}_{\infty} \leq C E_0
\end{align}
for $s \in [0,t]$.
Since $\xi = \Psi_\beta(\sigma)$ by definition, it also holds that $\sigma = \smash{\Psi_\beta^{-1}}(\xi)$.
Using the fact that $\norm{\sigma(t)}_{\infty} \leq \eta(t) \leq 1$, it follows that the estimates~\eqref{eq:xiestC9} and~\eqref{eq:xiestC1C2}
can be transferred to $\sigma$ using (uniform) smoothness of $\smash{\Psi_\beta^{-1}}$.
This results in
\begin{align}
    \label{eq:sigmafullest1}
    \norm{\sigma(s)}_{C^9} + (1+s)^{1/2} \norm{\nabla \sigma(s)}_\infty + (1+s) \norm{\Delta \sigma(s)}_\infty &\leq C E_0
\end{align}
for $s \in [0,t]$, which establishes~\eqref{eq:sigmaest}.

\emph{Step 2: Decomposing $v$.}
The Duhamel formulation of~\eqref{eq:pdev} reads
\begin{equation*}
    v(s) = \re^{sL}(u_0 - \phi) + \int_0^s \re^{(s - r)L} \widetilde{N}(r) \rd r
\end{equation*}
for $s \in [0,t]$, where we have abbreviated
\begin{equation}
    \label{eq:defNtilde}
    \widetilde{N}(r) \coloneq \Delta \sigma(r) (d_{\bot}\phi' - D\phi') + \abs{\nabla \sigma(r)}^2 (D \phi'' + \tfrac{1}{2}c\, \phi') + N(r) - g(r) \phi'.
\end{equation}
Applying the decomposition of Theorem~\ref{thm:decomposition} to both terms then shows that we have
\begin{equation}
    \label{eq:vdecomp}
    v(s) = \zeta(s) \phi' + w(s),
\end{equation}
with
\begin{subequations}
\label{eq:zetawdef}
\begin{align}
    \label{eq:defzeta}
    \zeta(s) &\coloneq \re^{sd_{\bot}\Delta_y} \langle e^*, u_0 - \phi\rangle_2+
    \int_0^s \re^{(s-r)d_{\bot}\Delta_y} \langle e^*,\widetilde{N}(r)\rangle_2 \rd r, \\
    \label{eq:winfbaseest}
    \norm{w(s)}_{\infty} &\leq C (1+s)^{-1}E_0 + C\int_0^s (1+(s-r))^{-1} \norm{\widetilde{N}(r)}_{\infty} \rd r
\end{align}
\end{subequations}
for $s \in [0,t]$. We will now bound $w$ and $\zeta$ separately.

\emph{Step 3a: Estimating $w$.}
We first notice that we can estimate
\begin{equation}
    \label{eq:Ntildeest}
    \norm{\widetilde{N}(s)}_{\infty} \leq C E_0 (1+s)^{-1}
\end{equation}
for $s \in [0,t]$, by applying the triangle inequality to~\eqref{eq:defNtilde} and then using~\eqref{eq:Nbaseest},~\eqref{eq:gest}, and~\eqref{eq:sigmafullest1}.
Substituting~\eqref{eq:Ntildeest} into~\eqref{eq:winfbaseest} and using Lemma~\ref{lem:convolution} then gives 
\begin{equation}
    \label{eq:west}
    \norm{w(s)}_{\infty} \leq C E_0 (1+s)^{-1}\log(2+s)
\end{equation}
for $s \in [0,t]$.

\emph{Step 3b: Estimating $\zeta$.}
We establish the estimate
\begin{align}
    \label{eq:zetaest}
    \norm{\zeta(s)}_{\infty} \leq C E_0 (1+s)^{-5/4}
\end{align}
for $s \in [0,t]$. 
For $s \leq 1$ this is seen from Lemma~\ref{lem:heat},~\eqref{eq:defzeta} and~\eqref{eq:Ntildeest}.
Hence, we now assume $s \geq 1$ and let $n = \lfloor s \rfloor \geq 1$.
We will exploit several cancellations induced by our tracking scheme.
For the first piece of cancellation, take the inner product of~\eqref{eq:defNtilde} with $e^*$ and use~\eqref{eq:normalization},~\eqref{eq:defdbot}, and Lemma~\ref{lem:ndispersion} to find
\begin{equation*}
    \langle e^*,\widetilde{N}(r)\rangle_2 =\langle e^*,N(r)\rangle_2 - g(r)
\end{equation*}
for $r \in [0,t]$. Substituting the above into~\eqref{eq:defzeta}, we thus obtain
\begin{equation}
    \label{eq:zeta1}
    \zeta(s) = \re^{sd_{\bot}\Delta_y} \langle e^*, u_0 - \phi\rangle_2 + \int_0^s \re^{(s-r)d_{\bot}\Delta_y}\langle e^*,N(r)\rangle_2 \rd r
    - \int_0^s \re^{(s-r)d_{\bot}\Delta_y} g(r) \rd r.
\end{equation}
Our choice of $g$ will ensure that the third term fully cancels the first term and partially cancels the second.
Indeed, using~\eqref{eq:defgnew} and the facts that $\varrho$ has unit integral and that $n = \lfloor s \rfloor \geq 1$, we find that the third term satisfies
\begin{equation}
\label{eq:T3a}
\begin{aligned}
    \int_0^s \re^{(s-r)d_{\bot}\Delta_y} g(r) \rd r
    &= \re^{s d_{\bot}\Delta_y}\int_0^s \varrho(r) \rd r \, \langle e^*,u_0 - \phi\rangle_2 \\
    &\qquad+ \,\sum_{k \in \bbN}\sqr[\bigg]{ \int_0^s \varrho(r-k) \rd r \int_{k-1}^k \re^{(s - r)d_{\bot}\Delta_y}\langle e^*,N(r)\rangle_2\rd r} \\
    &= \re^{s d_{\bot}\Delta_y}\langle e^*,u_0 - \phi \rangle_2 +\int_0^{n-1}\re^{(s-r)d_{\bot}\Delta_y}\langle e^*,N(r)\rangle_2 \rd r \\
    &\qquad+\,\int_0^s \varrho(r-n)\rd r \int_{n-1}^n \re^{(s-r)d_{\bot}\Delta_y}\langle e^*,N(r)\rangle_2 \rd r.
\end{aligned}
\end{equation}
Substituting~\eqref{eq:T3a} into~\eqref{eq:zeta1} yields
\begin{align*}
    \zeta(s) = \int_{n-1}^s \re^{(s-r)d_{\bot}\Delta_y} \langle e^*,N(r)\rangle_2 \rd r
    - \int_0^s \varrho(r - n)\rd r\int_{n-1}^{n} \re^{(s-r)d_{\bot}\Delta_y} \langle e^*,N(r)\rangle_2 \rd r.
\end{align*}
Applying Lemma~\ref{lem:heat} and the estimate~\eqref{eq:Nbaseest} to this ``fully canceled'' formula for $\zeta(s)$ then shows that~\eqref{eq:zetaest} also holds true for $s \in [0,t]$ with $s \geq 1$.

\emph{Step 4: Estimating $v$ and $N$.}
Combining~\eqref{eq:vdecomp} and~\eqref{eq:west}-\eqref{eq:zetaest} immediately yields
\begin{align}
    \label{eq:vestinfC1}
    \norm{v(s)}_{\infty} \leq C E_0 (1+s)^{-1}\log(2+s)
\end{align}
for $s \in [0,t]$, which is exactly~\eqref{eq:vkeyest}.
As a trivial consequence, we have $\norm{u(s)}_{\infty} \leq C$ for $s \in [0,t]$. By standard parabolic regularity theory~\cite{Lunardi_analytic_2013}, this implies $\smash{\chi(s)\norm{u(s)}_{C^9}} \leq C$ for $s \in [0,t]$, where we use that $\chi$ is supported on $[1/8,\infty)$.
The additional regularity of $u$ can then be transferred back to $v$ using~\eqref{eq:vurel} and~\eqref{eq:sigmafullest1}, which ultimately results in $\smash{\chi(s)\norm{v(s)}_{C^9}} \leq C$ for $s \in [0,t]$.
Interpolating this with~\eqref{eq:vestinfC1} then yields the rather crude bound
\begin{align}
    \label{eq:vestC2}
    \chi(s)\norm{v(s)}_{C^2} \leq C E_0^{3/4}(1+s)^{-3/4}
\end{align}
for $s \in [0,t]$, which is the final piece needed to obtain the desired estimate on the nonlinearity $N$.

\emph{Step 5: Conclusion.}
Substituting~\eqref{eq:gest},~\eqref{eq:sigmafullest1},~\eqref{eq:vestinfC1}, and~\eqref{eq:vestC2} into the nonlinear estimate~\eqref{eq:Nest1} obtained in Lemma~\ref{lem:Nestimate1} and counting powers of $E_0$ and $(1+s)$ we find
\begin{align*}
    \norm{N(s)}_{\infty} \lesssim E_0^{7/4}(1+s)^{-5/4}
\end{align*}
for $s \in [0,t]$. Combining this with~\eqref{eq:sigmafullest1} shows that~\eqref{eq:apkeyest} is satisfied.
Thus, it remains to show~\eqref{eq:gbetterest}.
To accomplish this, we combine the estimates~\eqref{eq:gest},~\eqref{eq:sigmafullest1}, and~\eqref{eq:vestinfC1} with the nonlinear bound~\eqref{eq:Nest2} from Lemma~\ref{lem:Nestimate2} to find
\begin{equation*}
    \norm{\re^{\frac{1}{8}d_{\bot}\Delta_y} \langle e^*,N(s)\rangle_2}_{\infty} \lesssim E_0^2 (1+s)^{-3/2} \log(2+s)
\end{equation*}
for $s \in [0,t]$. 
Inserting this into~\eqref{eq:defgnew}, while applying Lemma~\ref{lem:heat} and taking into account the support properties of $\varrho$, then yields~\eqref{eq:gbetterest}.

For the final claim, we insert $\sigma(0) \equiv g(0) \equiv 0$ and $v(0) = u_0 - \phi$ into~\eqref{eq:Nest1} and~\eqref{eq:template} to obtain
\begin{equation} \label{eq:eta0est}
\eta(0) \leq E_0 + C E_0^{-1}\norm{v(0)}^2_{\infty} \leq C E_0 \leq C E_0^{3/4},
\end{equation}
so long as $E_0 \leq 1$.

Now, let $C_1 > 0$ be an admissible constant for both~\eqref{eq:apkeyest} and~\eqref{eq:eta0est}, and set $\delta = \min\{\smash{(2C_1)^{-4/3}},1\}$.
Then, when $E_0 \leq \delta$, estimate~\eqref{eq:eta0est} implies $\eta(0) \leq 1/2$. In addition, estimate~\eqref{eq:apkeyest} shows that $\eta(t) \leq C_1 \smash{E_0^{3/4}} \leq 1/2$ for all $t \in [0,T]$ which satisfy $\eta(t) \leq 1$. Hence, it follows that $\eta(t) \leq 1$ for all $t \in [0,T]$ by continuity of $\eta$, as desired.
\end{proof}

\subsection{Construction of solutions to tracking scheme} \label{sec:extension}
Given a time $T>0$, and a solution $u$ to the reaction-diffusion system~\eqref{eq:rdefull} on $[0,T]$, we show that the a priori bounds established in Lemmas~\ref{lem:xiAP} and~\ref{lem:key} ensure the existence of a unique solution $(\sigma, v, g)$ to the tracking scheme defined by~\eqref{eq:pdesigma},~\eqref{eq:pdev}, and~\eqref{eq:defgnew} on the same interval $[0,T]$. In the proof of the following proposition, we obtain these solutions via a simple iterative extension procedure, which crucially relies on the following facts:
\begin{itemize}
    \item For $t\in[0,1]$ and $n\in\bbN$, the value of $g(n+t)$ is completely determined by $N(v(s),\sigma(s),g(s))$ on $[0,n]$.
    \item The solution $u$ and the forcing $g$ uniquely determine $\sigma$ and $v$.
\end{itemize}
\begin{proposition}[Extension argument]
    \label{prop:tracking}
There exists $\delta'> 0$ such that for any $T > 0$, $u_0 \in \Cub(\bbR^d)$ with $E_0 = \norm{u_0 - \phi}_\infty \leq \delta'$, and (classical) solution
\begin{align*}
 u \in C\big([0,T],\Cub(\bbR^d)\big) \cap C^\infty\big((0,T] \times \bbR^d\big)
\end{align*}
to~\eqref{eq:rdefull}, there exist unique smooth $\sigma,g \colon [0,T] \times \bbR^{d-1} \to \bbR$ and
\begin{align} \label{eq:vreg}
 v \in C\big([0,T],\Cub(\bbR^d)\big) \cap C^\infty\big((0,T] \times \bbR^d\big)
\end{align}
satisfying~\eqref{eq:pdesigma},~\eqref{eq:pdev}, and~\eqref{eq:defgnew} for $t \in [0,T]$.
\end{proposition}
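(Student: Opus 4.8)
The plan is to argue by a \emph{method of steps}, constructing the triple $(\sigma,v,g)$ successively on the intervals $[0,1],[0,2],[0,3],\dots$ (each intersected with $[0,T]$) and exploiting the two structural facts recalled just above the statement: the forcing $g$ on $[n,n+1]$ is prescribed by $N(v,\sigma,g)$ on the earlier interval $[0,n]$, while $u$ and $g$ together pin down $\sigma$ and $v$. Throughout I fix the regularity index $k$ large (say $k=9$, as in the proof of Lemma~\ref{lem:key}) and let $\delta'\in(0,1]$ be a small constant subject to finitely many smallness conditions; assume $E_0\le\delta'$.

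\emph{Base step.} On $[0,\min\{1,T\}]$ the support condition $\supp\varrho\subset[1/4,3/4]$ forces every $k\ge1$ term in~\eqref{eq:defgnew} to vanish, so $g(t)=\varrho(t)\,\re^{td_{\bot}\Delta_y}\langle e^*,u_0-\phi\rangle_2$ there. Since $\varrho$ vanishes near $t=0$ and the heat semigroup smooths, this $g$ lies in $C^\infty$ and satisfies $\norm{g(t)}_{C^k}\lesssim E_0\lesssim E_0(1+t)^{-5/4}$. Feeding it into Lemma~\ref{lem:xiAP} (with $M\sim E_0\le M_0$, valid for $\delta'$ small) yields a unique smooth Cole--Hopf variable $\xi$ solving~\eqref{eq:pdexibar} with $\norm{\xi(t)}_{C^k}\lesssim E_0$, hence a unique smooth $\sigma=\Psi_\beta^{-1}(\xi)$ solving~\eqref{eq:pdesigma}. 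I then \emph{define} $v$ via~\eqref{eq:vurel}; regularity of $u$ together with smoothness of $\sigma$ gives~\eqref{eq:vreg}, and the chain-rule computation of Section~\ref{subsec:shift}---valid because $u$ solves~\eqref{eq:rdefull} and $\sigma$ solves~\eqref{eq:pdesigma}---shows $v$ solves~\eqref{eq:pdev}, with the right initial datum since $\sigma(0)=0$.

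\emph{Induction step.} Suppose $(\sigma,v,g)$ has been constructed on $[0,n]$ satisfying~\eqref{eq:pdesigma},~\eqref{eq:pdev},~\eqref{eq:defgnew} and~\eqref{eq:vreg}; then Lemma~\ref{lem:key} (for $E_0\le\delta$, through the bootstrap at the end of its proof) already gives $\eta(t)\le1$ on $[0,n]$, hence $\norm{N(v(s),\sigma(s),g(s))}_\infty\le E_0(1+s)^{-5/4}$ and, via~\eqref{eq:defgnew} and Lemma~\ref{lem:heat}, $\norm{g(t)}_{C^k}\lesssim E_0(1+t)^{-5/4}$ on $[0,n]$. Using~\eqref{eq:defgnew} and $\supp\varrho\subset[1/4,3/4]$ once more, the value of $g$ on $[n,n+1]$ depends only on $N(v(r),\sigma(r),g(r))$ for $r\in[n-1,n]\subset[0,n]$, so $g$ extends uniquely and smoothly to $[0,n+1]$ with the same decay bound there. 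Lemma~\ref{lem:xiAP} applied on $[0,n+1]$ then produces a unique smooth $\xi$ on $[0,n+1]$, which by uniqueness restricts to the previously constructed one; thus $\sigma=\Psi_\beta^{-1}(\xi)$ (well-defined since $M\le M_0$) extends smoothly and solves~\eqref{eq:pdesigma}, and $v$ defined by~\eqref{eq:vurel} extends consistently, has regularity~\eqref{eq:vreg}, and solves~\eqref{eq:pdev} by Section~\ref{subsec:shift}. This closes the induction. Since $T$ is finite, finitely many steps cover $[0,T]$; restricting yields existence, and uniqueness on $[0,T]$ follows from the uniqueness at each step (the forcing $g$ is prescribed by~\eqref{eq:defgnew}, $\sigma$ is determined by Lemma~\ref{lem:xiAP}, and $v$ is determined by~\eqref{eq:vurel}).

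\emph{Main obstacle.} The argument contains no deep analysis; the work is to verify that the a priori control propagates consistently from step to step. The delicate point is that Lemma~\ref{lem:xiAP} requires the decay estimate on $g$ over the \emph{entire} interval on which $\sigma$ is sought, so at each extension one must confirm that the freshly revealed piece of $g$ on $[n,n+1]$ already obeys $\norm{g(t)}_{C^k}\lesssim E_0(1+t)^{-5/4}$---which is precisely where the inductive control of $N$ (i.e.\ $\eta\le1$, supplied by Lemma~\ref{lem:key}) is used. One must also keep $k$ fixed and large enough that $\sigma\in C^\infty$ carries all derivative bounds demanded by Lemmas~\ref{lem:Nestimate1} and~\ref{lem:Nestimate2}, and use the smallness of $E_0$ exactly twice: to keep $M\le M_0$ in Lemma~\ref{lem:xiAP} (so that $\Psi_\beta^{-1}$ applies) and $E_0\le\delta$ in Lemma~\ref{lem:key}.
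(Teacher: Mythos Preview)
Your proposal is correct and follows essentially the same inductive ``method of steps'' as the paper: extend $g$ to $[n,n+1]$ using the support of $\varrho$, invoke Lemma~\ref{lem:key} to control $N$ on $[0,n]$ and hence obtain the $(1+t)^{-5/4}$ decay of $g$ on the extended interval, apply Lemma~\ref{lem:xiAP} to extend $\sigma$, and finally recover $v$ from~\eqref{eq:vurel}. The only cosmetic difference is that you track $\norm{g(t)}_{C^9}$ whereas the paper applies Lemma~\ref{lem:xiAP} with $k=0$ and the sup-norm bound~\eqref{eq:gestfinal}; either choice works since smoothness of $\sigma$ follows from smoothness of $g$ regardless of which $C^k$-estimate is propagated.
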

\begin{proof}
Let us first assume that the conclusion holds with $T$ replaced by some $n \in \bbN_0$. We will show that with this assumption, the conclusion also holds with $T$ replaced by $t = n + t'$ for any $t' \in [0,1]$, as long as $n+t' \leq T$. The claim then follows by iteration (the case $n = 0$ is trivial).

Let $\sigma,v,g$ be the functions defined on $[0,n]$ which are supplied to us by the assumption. Using the support properties of $\varrho$, we see that there exists a unique smooth extension of $g$ to $[0,t]$ such that~\eqref{eq:defgnew} still holds. Taking $E_0 \leq \delta$, where $\delta > 0$ is as in Lemma~\ref{lem:key}, we have $\eta(s) \leq 1$ for $s \in [0,n]$.
Via~\eqref{eq:template}, this implies that $\norm{N(v(s),\sigma(s),g(s))}_{\infty} \leq E_0 (1+s)^{-5/4}$ 
holds for $s \in [0,n]$. Substituting this into~\eqref{eq:defgnew}, applying Lemma~\ref{lem:heat}, and recalling that $\varrho$ is supported on $[1/4,3/4]$, we obtain a $T$- and $u_0$-independent constant $C > 0$ such that
\begin{equation}
\label{eq:gestfinal}
\norm{g(s)}_{\infty} \leq C E_0 (1+s)^{-5/4}
\end{equation}
for $s \in [0,t]$. Hence, Lemma~\ref{lem:xiAP} yields a $T$- and $u_0$-independent constant $M_0 > 0$ such that, provided $E_0 \leq M_0$, there exists a unique smooth extension of $\sigma$ which solves~\eqref{eq:pdesigma} on $[0,t]$.
With the unique extensions of $\sigma$ and $g$ at hand, we use~\eqref{eq:vurel} to uniquely extend $v$.
This ensures that $v$ has regularity~\eqref{eq:vreg}, which justifies the calculations in Section~\ref{sec:modulation} which show that $v$ solves~\eqref{eq:pdev}. Thus, the result follows by taking $\delta' = \min\{M_0,\delta\}$.
\end{proof}

\section{Proofs of the main results} \label{sec:proof}

The proof of Theorem~\ref{thm:main} follows by combining the a priori bounds from Lemma~\ref{lem:key} with standard local existence and regularity theory for semilinear parabolic equations. At the same time, we prove Theorem~\ref{thm:HJ} by approximating the modulation function $\sigma$ with the solution $\tilde\sigma$ of the viscous Hamilton--Jacobi equation~\eqref{eq:HJ}. This is achieved by estimating the difference of their Cole--Hopf transforms, which satisfies a simple forced linear heat equation, allowing us to derive the required bounds via the Duhamel formulation.

\begin{proof}[Proof of Theorems~\ref{thm:main} and~\ref{thm:HJ}]
    Let $\delta,\delta' > 0$ be as in Lemma~\ref{lem:key} and Proposition~\ref{prop:tracking}, respectively. Set $\eps = \min\cur{\delta,\delta'}$ and assume $E_0 \leq \eps$.
    By standard theory~\cite{Lunardi_analytic_2013} for parabolic semilinear equations there exist a maximal time $T' \in (0,\infty]$ and a maximally defined (classical) solution
    \begin{align*}
     u \in C\big([0,T'),\Cub(\bbR^d)\big) \cap C^\infty\big((0,T') \times \bbR^d\big)
    \end{align*}
    of~\eqref{eq:rdefull} such that the following blow-up criterion
    \begin{equation}
        \label{eq:blowup}
        \limsup_{t \uparrow T'} \norm{u(t)}_{\infty} < \infty \implies T' = \infty 
    \end{equation}
    holds.
    Applying Proposition~\ref{prop:tracking} and using $E_0 \leq \delta'$, we thus obtain smooth $\sigma,g \colon [0,T') \times \bbR^{d-1} \to \bbR$ and
   \begin{align*}
    v \in C\big([0,T'),\Cub(\bbR^d)\big) \cap C^\infty\big((0,T') \times \bbR^d\big)
   \end{align*}satisfying~\eqref{eq:pdesigma},~\eqref{eq:pdev}, and~\eqref{eq:defgnew} for $t \in [0,T')$.
   By Lemma~\ref{lem:key} and the fact that $E_0 \leq \delta$ it then follows that~\eqref{eq:thmest4}-\eqref{eq:thmest1} and~\eqref{eq:gbetterest} hold for all $t \in [0,T')$. The uniform estimate for $u$ on $[0,T')$ provided by~\eqref{eq:thmest4} then implies that $T' = \infty$ using~\eqref{eq:blowup}. Thus,~\eqref{eq:thmest4}-\eqref{eq:thmest1} hold for all $t \geq 0$ and the proof of Theorem~\ref{thm:main} is complete.

   We proceed with the proof of Theorem~\ref{thm:HJ}. Throughout the proof, we denote by $C > 0$ any constant, which is independent of $t$ and $u_0$. Let $\tilde{\sigma} \in \smash{C\big([0,\infty),\Cub(\bbR^{d-1})\big)} \cap \smash{C^\infty\big((0,\infty) \times \bbR^{d-1}\big)}$ be the global (classical) solution to viscous Hamilton--Jacobi equation~\eqref{eq:HJ}, whose existence and uniqueness follow directly via the Cole--Hopf transform.
   Recalling the definitions of $\beta$~\eqref{eq:defbeta} and $\Psi_{\beta}$~\eqref{eq:CH}, we set
    \begin{align}
        \label{eq:defxixibarX}
        \xi \coloneq \Psi_\beta(\sigma), \qquad \tilde{\xi} \coloneq \Psi_\beta(\tilde{\sigma}), \qquad X \coloneq \tilde{\xi} - \xi.
    \end{align}
    so that $X$ satisfies
    \begin{equation}
    \label{eq:pdeX}
    \begin{aligned}
        \partial_t X &= d_{\bot}\Delta X + g(1+ \beta\xi), \\
        X(0) &= \Psi_\beta(\langle e^*,\phi - u_0\rangle_2).
    \end{aligned}
    \end{equation}
    Our strategy is to first show that $X$ remains small and then transfer this control to $\sigma - \tilde\sigma$. Specifically, we will establish
    \begin{equation}
        \label{eq:Xfullest}
        \norm{X(t)}_{\infty} \leq C \big(E_0^2 + E_0(1+t)^{-1}\big), \qquad t \geq 0.
    \end{equation}
    To this end, we begin by collecting several preliminary estimates.
    First, note that $\smash{\re^{-C \eps} \leq 1+ \beta \tilde{\xi}(0) \leq \re^{C \eps}}$. Using $\smash{\tilde{\xi}(t) = \re^{t d_\bot \Delta_y} \tilde{\xi}(0)}$ and positivity of the heat semigroup, this implies 
    $\smash{\re^{-C \eps} \leq 1+ \beta \tilde{\xi}(t) \leq \re^{C \eps}}$ for all $t \geq 0$. 
    This guarantees that the inverse Cole--Hopf transform of $\tilde{\xi}$ is well-defined and uniformly smooth.
    By contractivity of the heat semigroup, it also holds that $\smash{\norm{\tilde{\xi}(t)}_{\infty} \leq \norm{\tilde{\xi}(0)}_{\infty} \leq \ C E_0}$ for all $t \geq 0$.
    Therefore, applying the Cole--Hopf transform to $\sigma(t)$ and its inverse to $\smash{\tilde{\xi}(t)}$, while recalling estimate~\eqref{eq:thmest1}, it can be seen that
    \begin{align}
        \label{eq:sigmasigmatildebound}
        \norm{\sigma(t)}_{\infty}
        + \norm{\tilde{\sigma}(t)}_{\infty}
        + \norm{\xi(t)}_{\infty} \leq CE_0
    \end{align}
    for $t \geq 0$. Combining estimate~\eqref{eq:gbetterest} with~\eqref{eq:sigmasigmatildebound}, while recalling that $\varrho$ is supported on $[1/4,3/4]$, we obtain
    \begin{align}
        \label{eq:gxibestest}
        \norm{\xi(t)g(t)}_{\infty} &\leq C E_0^2 (1+t)^{-3/2} \log(2+t)
    \end{align}
    for $t \geq 0$. 
    As a final ingredient, an application of Taylor's theorem guarantees that
    \begin{align}
        \label{eq:ictaylor}
        \norm{X(0)}_{\infty} &\leq C E_0, \qquad        \norm{X(0) - \langle e^*,\phi - u_0 \rangle_2 }_{\infty} \leq CE_0^2.
    \end{align}
    Now we are in position to prove~\eqref{eq:Xfullest}.
    For $t \leq 1$, the claim follows directly by inserting~\eqref{eq:gbetterest}, \eqref{eq:gxibestest}, and~\eqref{eq:ictaylor} into the Duhamel representation of~\eqref{eq:pdeX} and applying Lemma~\ref{lem:heat}.
    Next, we treat the case $t \geq 1$.
    We first observe, since $\varrho$ is supported on $[1/4,3/4]$, that~\eqref{eq:defgnew} reduces to
    \begin{equation*}
        g(s) = \varrho(s)\re^{s d_{\bot}\Delta_y}\langle e^*,u_0 - \phi \rangle_2
    \end{equation*}
    for $s \in [0,1]$. Substituting this into the Duhamel formulation for~\eqref{eq:pdeX} and using that $\varrho$ has unit integral, yields the identity
    \begin{align*}
        X(1) &= \re^{d_{\bot}\Delta_y}\sqr[\Big]{X(0) -  \langle e^*,\phi - u_0 \rangle_2} 
        + \beta \int_0^1 \re^{(1 - s)d_{\bot}\Delta_y}\xi(s)g(s)\rd s.
    \end{align*}
    Hence, invoking~\eqref{eq:gxibestest} and~\eqref{eq:ictaylor}, and Lemma~\ref{lem:heat}, we infer that $\norm{X(1)}_{\infty} \leq C E_0^2$.
    Restarting the Duhamel formulation for~\eqref{eq:pdeX} at $t=1$, we also find the identity
    \begin{align*}
        X(t) = \re^{(t - 1)d_{\bot}\Delta_y}X(1) + \int_1^t \re^{(t - s)d_{\bot}\Delta_y}\sqr[\big]{g(s)(1+\beta\xi(s))}\rd s, \qquad t \geq 1.
    \end{align*}
    Combining this with our bound for $X(1)$, Lemma~\ref{lem:heat}, and estimates~\eqref{eq:gbetterest} and~\eqref{eq:gxibestest} and using that $\varrho$ is supported on $[1/4,3/4]$, it follows that~\eqref{eq:Xfullest} also holds for $t \geq 1$, as claimed.
    Finally, we invert the expressions in~\eqref{eq:defxixibarX} using~\eqref{eq:sigmasigmatildebound} and apply estimate~\eqref{eq:Xfullest} to arrive at
    \begin{align*}
        \norm{\sigma(t) - \tilde{\sigma}(t)}_{\infty} \leq C\norm{X(t)}_{\infty} \leq C \big(E_0^2 + E_0 (1+t)^{-1}\big)
    \end{align*}
    for $t \geq 0$. Combining this with~\eqref{eq:thmest4}-\eqref{eq:thmest1} and smoothness of $\phi$ shows that~\eqref{eq:HJeffest} holds, concluding the proof of Theorem~\ref{thm:HJ}.
\end{proof}

Subsequently, we prove Corollary~\ref{cor:oscillating} by combining the leading-order approximation of the front interface dynamics by the viscous Hamilton--Jacobi equation~\eqref{eq:HJ_pde} with the construction in~\cite[\S\,3]{matano_stability_2009} of small, infinitely oscillating solutions to~\eqref{eq:HJ_pde}.

\begin{proof}[Proof of Corollary~\ref{cor:oscillating}]
    Let $\varepsilon > 0$ be as in Theorem~\ref{thm:HJ}. It suffices to prove the claim in the case $d = 2$ (when $d \geq 3$, we can extend the solutions to be constant in the additional directions).
    
    \emph{Step 1: Construction.}
    Using the construction from~\cite[\S\,3]{matano_stability_2009}, we can find a smooth solution $\xi \colon [0,\infty) \times \bbR \to \bbR$ to the heat equation $\partial_t \xi = d_{\bot} \partial_{yy}\xi$,
    a sequence of times $t_n \in [0,\infty)$ diverging to infinity, and a sequence of points $y_n \in \bbR$ such that it holds
    \begin{align*}
    \abs{\xi(t,y)} \leq 2, 
    \qquad \xi(t_n,0) = (-1)^{n},
    \qquad \xi(t_n,y_n) = - (-1)^{n},
    \end{align*}
    for all $t \geq 0$, $y \in \bbR$, and $n \in \bbN$.
    We then define smooth functions
    \begin{align*}
        \tilde{\sigma}_{\delta} &\coloneq \Psi_\beta^{-1}(\delta \xi), \qquad 
        u_{0,\delta}(y,z) \coloneq \phi(z) - \tilde\sigma_{\delta}(0,y)\phi'(z),
    \end{align*}
    for $(y,z) \in \bbR^2$ and $\delta \in (0,(4\beta)^{-1})$.
    
    \emph{Step 2. Verification.}
    We now claim that the statement of the corollary is witnessed by the family of initial conditions $(u_{0,\delta})_{\delta \ll 1}$.
    To see this, first note that by setting $u_0 = u_{0,\delta}$ in~\eqref{eq:HJ_ic}, it follows using~\eqref{eq:normalization} that $\tilde{\sigma}_{\delta}$ is exactly the global  solution to~\eqref{eq:HJ}.
    Moreover, from the properties of $\xi$ and~\eqref{eq:CHinv} we also establish
    \begin{subequations}
    \begin{align}
    \label{eq:u0deltaic}
    E_{0,\delta} \coloneq \norm{u_{0,\delta} - \phi}_\infty \lesssim \norm{\tilde\sigma_{\delta}(0)}_\infty &\lesssim \delta,\\
    \abs{\tilde{\sigma}_{\delta}(t_n,0) - (-1)^n \delta}+\abs{\tilde{\sigma}_{\delta}(t_n,y_n) + (-1)^n \delta} &\lesssim \delta^2,
    \end{align}
    \end{subequations}
    for all $\delta \in (0,(4\beta)^{-1})$ and $n \in \bbN$.
    Now let $u_{\delta}$ denote the solution to~\eqref{eq:rde} with initial condition $u_{0,\delta}$.
    Using the smoothness of $\phi$, an application of Theorem~\ref{thm:HJ} then yields that
    \begin{align*}
        \limsup_{n \to \infty} \sup_{z \in \bbR^d} \abs{u_{\delta}(t_n,0,z) - \phi(z - c t_n - (-1)^{n}\delta)} \lesssim \delta^2,\\
        \limsup_{n \to \infty} \sup_{z \in \bbR^d} \abs{u_{\delta}(t_n,y_n,z) - \phi(z - c t_n + (-1)^{n}\delta)} \lesssim \delta^2,
    \end{align*}
    for all $\delta \in (0,\min\cur{(4\beta)^{-1},\eps})$.
    Since $\phi' \not\equiv 0$ by Hypothesis~\ref{hyp:spectrum_1d}-\ref{it:spectrum_normal}, the mean value theorem finally yields constants $C_0,c_0 > 0$ such that it holds:
    \begin{align*}
        \limsup_{n \to \infty} \, \inf_{a \in \bbR}
        \, \sup_{(y,z) \in \bbR^d} \, \abs{u_{\delta}(t_n,y,z) - \phi(z - a)} \geq 2 c_0\, \delta - C_0\, \delta^2 \geq c_0\,\delta > 0
    \end{align*}
    for all $\delta \in (0,\min\cur{(4\beta)^{-1},\eps,c_0 C_0^{-1}})$, so that~\eqref{eq:infosc} holds.
    Since $E_{0,\delta}$ can be arbitrarily small due to~\eqref{eq:u0deltaic}, this completes the proof.
\end{proof}

Finally, the proof of the asymptotic stability result in Theorem~\ref{thm:decay} is based on an iterative application of the following lemma, which shows that transversely localized perturbations of the front remain transversely localized and that their $L^\infty$-norm is reduced by a factor of one half after a finite time. Transverse spatial localization is propagated in time via a standard Gr\"onwall argument, while the decay in norm follows by combining the estimate in Theorem~\ref{thm:HJ} with the observation that sufficiently localized solutions to the viscous Hamilton--Jacobi equation~\eqref{eq:HJ_pde} converge uniformly to zero.

\begin{lemma}
    \label{lem:decayiterate}
    There exists $\eps > 0$ such that, whenever  $u_0 \in \Cub(\bbR^d)$ satisfies~\eqref{eq:E0ineq} and
    \begin{equation*}
		\lim_{\abs{y} \to \infty} \sup_{z \in \bbR}\, \abs{u_0(y,z) - \phi(z)} = 0,
	\end{equation*}
    the following statements hold:
    \begin{itemize}
        \item There exists a unique global solution $u$ to~\eqref{eq:rdefull} with regularity~\eqref{eq:soluglobal}.
        \item The solution $u$ retains its transverse localization: it holds
        \begin{equation}
            \label{eq:preservedecay}
            \lim_{\abs{y} \to \infty} \, \sup_{z \in \bbR}\, \abs{u(t,y,z) - \phi(z - ct)} = 0, \qquad t \geq 0.
        \end{equation}
        \item There exists a time $t_1 > 0$ such that
        \begin{equation}
            \label{eq:decayconvergence}
            \sup_{(y,z) \in \bbR^d} \, \abs{u(t,y,z) - \phi(z - ct)} \leq \tfrac{1}{2} E_0, \qquad t \geq t_1.
        \end{equation}
    \end{itemize}
\end{lemma}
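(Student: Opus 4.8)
The plan is to layer two elementary facts about the heat semigroup acting on transversely vanishing functions on top of Theorems~\ref{thm:main} and~\ref{thm:HJ}. First I would shrink $\eps>0$ so that both of these theorems apply; this supplies the unique global solution $u$ with regularity~\eqref{eq:soluglobal}, a smooth modulation $\sigma$ obeying~\eqref{eq:thmest4}--\eqref{eq:thmest1}, the solution $\tilde{\sigma}$ of the viscous Hamilton--Jacobi equation~\eqref{eq:HJ}, and the comparison~\eqref{eq:HJeffest}. Passing to the co-moving frame, the perturbation $p(t,y,z)\coloneq u(t,y,z+ct)-\phi(z)$ solves $\partial_t p=Lp+Q(p)$, where $Q(p)\coloneq f(\phi+p)-f(\phi)-f'(\phi)p$ and $p(0,y,z)=u_0(y,z)-\phi(z)$, and satisfies $\norm{p(t)}_\infty\lesssim E_0$ uniformly in $t$ by Theorem~\ref{thm:main}. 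Then~\eqref{eq:preservedecay} is precisely the assertion that $p(t)$ belongs, for every $t\geq0$, to the closed subspace $Y\coloneq\{v\in\Cub(\bbR^d):\lim_{|y|\to\infty}\sup_{z\in\bbR}|v(y,z)|=0\}$.

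Next I would prove this localization is propagated by showing that both $\re^{tL}$ and the Nemytskii map $Q$ leave $Y$ invariant: then, since $p$ is the unique fixed point of the Duhamel map, and that map restricts to a contraction on $C([0,T_0],Y)$ for small $T_0$, a forward iteration forces $p(t)\in Y$ for all $t$. Invariance of $Y$ under $Q$ is clear because $|Q(p)(y,z)|\lesssim|p(y,z)|^2$ pointwise (as $\norm{p}_\infty\leq\eps$), so $\sup_z|Q(p)(y,z)|\to0$ as $|y|\to\infty$. For $\re^{tL}$ I would invoke the decomposition of Theorem~\ref{thm:decomposition}: for $v\in Y$ one has $\langle e^*,v\rangle_2\in C_0(\bbR^{d-1})$ because $e^*\in L^1(\bbR)$, and the heat semigroup preserves $C_0(\bbR^{d-1})$, so the principal part $\re^{t d_{\bot}\Delta_y}\langle e^*,v\rangle_2\,\phi'$ lies in $Y$; for the remainder, Propositions~\ref{prop:offdiagonal} and~\ref{prop:ondiagonal} give, for $t\geq1$, a bound $|[Hv](t,y,\tilde{y},z)|\lesssim G_t(y-\tilde{y})\,\norm{v(\tilde{y},\cdot)}_\infty$ with $G_t$ a transversely Gaussian weight of total mass $\lesssim t^{-1}$, and splitting $\int_{\bbR^{d-1}}G_t(y-\tilde{y})\norm{v(\tilde{y},\cdot)}_\infty\rd\tilde{y}$ into the regions $|\tilde{y}|\leq R$ and $|\tilde{y}|>R$ and letting $R\to\infty$ shows $S(t)v\in Y$; the short-time range $t\in[0,1]$ is covered by standard Gaussian upper bounds for the kernel of $L$. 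This yields~\eqref{eq:preservedecay}.

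For~\eqref{eq:decayconvergence} I would use Theorem~\ref{thm:HJ}. Bounding $|u(t,y,z)-\phi(z-ct)|$ by $|u(t,y,z)-\phi(z-ct-\tilde{\sigma}(t,y))|+|\phi(z-ct-\tilde{\sigma}(t,y))-\phi(z-ct)|$, the estimate~\eqref{eq:HJeffest} together with the mean value theorem gives
\begin{equation*}
\sup_{(y,z)\in\bbR^d}|u(t,y,z)-\phi(z-ct)|\lesssim E_0^2+E_0(1+t)^{-1}\log(2+t)+\norm{\phi'}_\infty\norm{\tilde{\sigma}(t)}_\infty.
\end{equation*}
The initial datum $\tilde{\sigma}(0,y)=\langle e^*,\phi-u_0(y,\cdot)\rangle_2$ lies in $C_0(\bbR^{d-1})$ with $\norm{\tilde{\sigma}(0)}_\infty\lesssim E_0$, and the Cole--Hopf variable $\tilde{\xi}\coloneq\Psi_\beta(\tilde{\sigma})$ solves the pure heat equation $\partial_t\tilde{\xi}=d_\bot\Delta_y\tilde{\xi}$, i.e.~\eqref{eq:pdexibar} with $g\equiv0$; hence $\tilde{\xi}(t)=\re^{t d_\bot\Delta_y}\tilde{\xi}(0)$ with $\tilde{\xi}(0)\in C_0(\bbR^{d-1})$, and $\norm{\re^{t d_\bot\Delta_y}w}_\infty\to0$ as $t\to\infty$ for every $w\in C_0(\bbR^{d-1})$ (split the Gaussian convolution at radius $R$: the near part is $\lesssim t^{-(d-1)/2}\norm{w}_\infty$, the far part is $\leq\sup_{|\eta|>R}|w(\eta)|$). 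For $E_0$ small $\tilde{\xi}$ remains in the range on which $\Psi_\beta^{-1}$ is Lipschitz, so $\norm{\tilde{\sigma}(t)}_\infty\lesssim\norm{\tilde{\xi}(t)}_\infty\to0$. Shrinking $\eps$ so the $E_0^2$ term is at most $\tfrac14 E_0$, and then choosing $t_1$ large enough that $E_0(1+t)^{-1}\log(2+t)$ and $\norm{\phi'}_\infty\norm{\tilde{\sigma}(t)}_\infty$ are each at most $\tfrac18 E_0$ for $t\geq t_1$, yields~\eqref{eq:decayconvergence}; here $t_1$ may depend on $u_0$ through the decay rate of $\tilde{\sigma}$, which the statement permits.

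The \emph{main obstacle} is the invariance step: the earlier results do not see the propagation of transverse localization on their own, and it genuinely relies on the Gaussian decay of the semigroup kernel in the transverse variable extracted from the Fourier-symbol analysis of Sections~\ref{sec:spectral} and~\ref{sec:linear} --- the purely algebraic bound~\eqref{eq:linearest} alone would not suffice --- while the short-time regime requires a separate, standard Gaussian-bounds input for the parabolic operator $L$. The remaining two steps are routine consequences of Theorems~\ref{thm:main} and~\ref{thm:HJ}.
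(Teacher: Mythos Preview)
Your proposal is correct, and for~\eqref{eq:decayconvergence} it coincides with the paper's argument via the Cole--Hopf variable and $C_0$-decay of the heat flow. For the transverse-localization step~\eqref{eq:preservedecay}, however, you take a considerably heavier route. You work with $\partial_t p = Lp + Q(p)$ and invoke the full semigroup decomposition of Theorem~\ref{thm:decomposition} together with the pointwise Gaussian bounds of Propositions~\ref{prop:offdiagonal}--\ref{prop:ondiagonal} (plus a separate short-time kernel input) to show that $\re^{tL}$ preserves $Y$. The paper instead writes the perturbation in the \emph{stationary} frame, $w=u-u_{\mathrm{tf}}$, which satisfies $\partial_t w = D\Delta w + \bigl[f(u_{\mathrm{tf}}+w)-f(u_{\mathrm{tf}})\bigr]$ with elementary linear part $D\Delta$ and a merely Lipschitz nonlinearity; the explicit Gaussian kernel of $\re^{tD\Delta}$ makes preservation of the sublinear functional $M[h]=\limsup_{R\to\infty}\sup_{|y|\geq R,\,z\in\bbR}|h(y,z)|$ immediate, and a one-line Gr\"onwall closes the argument. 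Your approach does work --- it requires only the mild refinement of Propositions~\ref{prop:offdiagonal}--\ref{prop:ondiagonal} with $\|v(\tilde{y},\cdot)\|_\infty$ in place of $\|v\|_\infty$, which is clear from their proofs --- but your stated ``main obstacle'' is in fact not one: no Fourier-symbol analysis is needed here, because choosing $D\Delta$ rather than $L$ as the linear part sidesteps the issue entirely.
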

\begin{proof} Throughout the proof, we denote by $C > 0$ any constant, which is independent of $t$ and $u_0$.

Taking $\eps > 0$ sufficiently small, let $u$ and $\tilde{\sigma}$ be the global solutions to~\eqref{eq:rdefull} and~\eqref{eq:HJ} with regularity~\eqref{eq:soluglobal} and~\eqref{eq:sigmatildereg} obtained from Theorem~\ref{thm:HJ},
so that the estimate~\eqref{eq:HJeffest} holds.
Recalling that $u_{\mathrm{tf}}(t,y,z) = \phi(z - ct)$ was introduced in~\eqref{eq:travelingfront}, we set $w \coloneq u - u_{\mathrm{tf}}$, so that $w$ has regularity~\eqref{eq:soluglobal} and is a solution to
    \begin{equation}
    \label{eq:pdevtildeextra}
    \begin{aligned}
        \partial_t w &= D \Delta w + f(u_{\mathrm{tf}} + w) - f(u_{\mathrm{tf}}), \\
        w(0) &= u_0 - \phi.
    \end{aligned}
    \end{equation}
    For $h \in \Cub(\bbR^d)$, we define the sublinear functional
\begin{equation*}
    M\sqr{h} = \limsup_{R \to \infty} \sup_{\abs{y} \geq R,\, z \in \bbR} \abs{h(y,z)}.
\end{equation*}
We claim that
\begin{align*}
    M\sqr{\re^{t D \Delta}h} \lesssim M\sqr{h}
\end{align*}
for all $t \geq 0$ and $h \in \Cub(\bbR^d)$. Diagonalizing the symmetric matrix $D$ as $D = JD_0J^{-1}$ with $D_0 = \mathrm{diag}(d_1,\ldots,d_n) \in \bbR^{n \times n}$ a positive diagonal matrix and $J \in \bbR^{n \times n}$ invertible, this bound indeed follows from the Gaussian decay in space of the associated temporal Green's function $G(t,x) = J\,\mathrm{diag}(G_1(t,x),\ldots,G_n(t,x))\,J^{-1}$ with 
\begin{align*}
G_i(t,x) = \frac{\re^{-\frac{\abs{x}^2}{4d_i t}}}{(4\pi d_i t)^{d/2}}, \qquad i = 1,\ldots,n.
\end{align*}
    Hence, from the Duhamel formulation of~\eqref{eq:pdevtildeextra} we can derive the estimate
    \begin{align*}
        M\sqr{w(t)} &\leq C M\sqr{w(0)} + C\int_0^t M\sqr{f(u_{\mathrm{tf}}(s) + w(s)) - f(u_{\mathrm{tf}}(s))} \rd s \\
        &\leq CM\sqr{w(0)} + C\int_0^t M\sqr{w(s)} \rd s
    \end{align*}
    for $t \geq 0$. Since we have $M\sqr{w(0)} = 0$ by the assumption on $u_0$, and the map $t \mapsto M\sqr{w(t)}$ is continuous, we conclude by Gr\"onwall's lemma that $M\sqr{w(t)} = 0$ for all $t \geq 0$, which yields~\eqref{eq:preservedecay}.

    Next, note that $\lim_{\abs{y} \to \infty}\tilde{\sigma}(0,y) = 0$ by the assumption on $u_0$ and~\eqref{eq:HJ_ic}. Hence, using that solutions with localized initial data to the heat equation decay uniformly to $0$ as $t \to \infty$, it follows, after applying the Cole--Hopf transform, that $\lim_{t \to \infty} \norm{\tilde{\sigma}(t)}_{\infty} = 0$.
    On the other hand, we deduce from~\eqref{eq:HJeffest} that
    \begin{equation*}
        \sup_{(y,z) \in \bbR^d} \, \abs{u(t,y,z) - \phi(z - ct)} \leq C\big( E_0^2 + E_0 (1+t)^{-1}\log(2+t) + \norm{\tilde{\sigma}(t)}_{\infty}\big)
    \end{equation*}
    for $t \geq 0$. 
    Thus, after additionally imposing $\eps \leq (4C)^{-1}$, we can find $t_1 > 0$, depending only on $u_0$, such that~\eqref{eq:decayconvergence} holds.
\end{proof}

\begin{proof}[Proof of Theorem~\ref{thm:decay}]
Using translational symmetry of~\eqref{eq:rde}, we iterate Lemma~\ref{lem:decayiterate} to find an increasing sequence of times $t_n \in [0,\infty)$ such that
\begin{equation*}
    \sup_{(y,z) \in \bbR^d} \, \abs{u(t,y,z) - \phi(z - ct)} \leq E_0 2^{-n}, \qquad t \geq t_n,
\end{equation*}
which proves the claim.
\end{proof}

\section{Outlook and open problems}
The nonlinear stability framework developed in this paper extends beyond the class of semilinear reaction-diffusion systems considered here. Provided that the linearization generates a $C^0$-semigroup on $\Cub(\bbR^d)$ with uniformly damped high-frequency component, we expect the approach to apply to bistable fronts in general semilinear dissipative systems and even to quasilinear dissipative problems, provided sufficient regularity control is available within the nonlinear iteration scheme. Such $L^\infty$-based regularity control may be obtained via energy estimates in uniformly local Sobolev spaces; see~\cite{Alexopoulos_nonlinear_2026}.
 
The fact that only derivatives of the modulation $\sigma$ enter the nonlinearity of the perturbation equation~\eqref{eq:pdev} suggests that one may allow for \emph{modulational initial data} of the form
\begin{align} \label{eq:moddata} 
u_0(y,z) = \phi(z - \sigma_0(y)) + v_0(y,z),
\end{align}
where $\sigma_0 \in C_{\mathrm{ub}}^1(\bbR^{d-1})$ and $v_0 \in \Cub(\bbR^d)$ satisfy $\|\nabla \sigma_0\|_\infty + \|v_0\|_\infty \ll 1$. Since $\|\sigma_0\|_\infty$ need not be small, such initial data are not necessarily close to any fixed translate of the planar front. Proving that the associated solution $u$ to~\eqref{eq:rdefull} stays close to a modulated planar front would therefore constitute a global stability result. Similar \emph{modulational stability} results~\cite{Iyer_mixing_2019,alexopoulos_nonlinear_2025} have been established for wave trains in one spatial dimension, and we expect that ideas can be adapted to the present setting. 

The gradient bound $\smash{\|\nabla \re^{t d_\perp \Delta_y} \sigma\|_\infty \lesssim t^{-1/2} \|\sigma\|_{\mathrm{BMO}}}$ suggests a further relaxation to initial modulations $\sigma_0$ in~\eqref{eq:moddata} of bounded mean oscillation. In particular, this permits choices such as $\sigma_0(y) = \log(1+\delta |y|)$ for $0<\delta\ll 1$, for which $\|\nabla \sigma_0\|_\infty\ll 1$ and $|\sigma_0(y)|\to\infty$ as $|y|\to\infty$, so that $\phi(z - \sigma_0(y))$ represents a \emph{curved front}. Modulational stability for slowly linearly growing initial modulations $\sigma_0$ with $\sigma_0'(y) = \alpha\delta \tanh(\delta y)$ and $0 < \delta \ll \alpha$, has been obtained in two-dimensional reaction-diffusion systems under exponentially localized perturbations~\cite{Haragus_corner_2006}. It is an open question whether stability can also be established with respect to (partly) nonlocalized perturbations.

Existence and stability of bistable curved fronts that are not almost planar, i.e., which cannot be regarded as modulated fronts $\phi(z - \sigma_0(y))$ with $\|\nabla \sigma_0\|_\infty$ small, have been obtained in scalar reaction-diffusion equations and in specific multicomponent reaction-diffusion systems obeying a comparison principle; see, for instance,~\cite{Hamel_existence_2005,Roquejoffre_nontrivial_2009,Ninomiya_existence_2005,Wang_traveling_2012} and numerous references therein. To the best of the authors' knowledge, the nonlinear stability of not-almost-planar bistable curved fronts in general multicomponent reaction-diffusion systems remains an open problem, both against localized and against nonlocalized perturbations.

Another possibility is to augment~\eqref{eq:rde} with a random forcing term $\zeta$, as was done in~\cite{vandenbosch_local_2025}.
It was already observed in \cite{sendina-nadal_wave_1998} that in this case the front motion is governed by a viscous Hamilton--Jacobi equation with an additional random forcing term $\smash{\tilde{\zeta}}$ derived from $\zeta$, so that
\begin{equation}
    \label{eq:KPZ}
    \partial_t \sigma = d_{\bot}\Delta \sigma + \tfrac{1}{2}\abs{\nabla \sigma}^2 + \tilde{\zeta}.
\end{equation}
When $\smash{\tilde{\zeta}}$ is space-time white noise, \eqref{eq:KPZ} is exactly the Kardar--Parisi--Zhang (KPZ) model for random asymmetric surface growth~\cite{kardar_dynamic_1986}.
An interesting feature of KPZ is that it is only well-posed after a renormalization which is (formally) achieved by adding `$-\infty$' to the right-hand side of~\eqref{eq:KPZ}; see e.g.~\cite{hairer_solving_2013}.
In our context, this suggests that the average speed of the front will diverge as the correlation length of $\smash{\tilde{\zeta}}$ approaches zero, showing that rough noise can have an outsized effect on front propagation.
A proof of this fact would be of great interest and would require techniques from the study of singular stochastic PDEs.
Still, we expect that our methods to treat~\eqref{eq:rde} can be useful to future researchers attempting to prove such a result.

\appendix

\section{Uniform semigroup bounds}
The following lemma is used in Section~\ref{subsec:semigroupbounds} to pass from $\xi$-dependent bounds on $\re^{t\hat{L}_{\xi}}$ to $\xi$-independent bounds.
We expect that the following argument is known, but were unable to locate a suitable statement in the literature.

\begin{lemma}
    \label{lem:uniformsemigroup}
    Let $X$ be a Banach space, let $A$ be the generator of a $C_0$-semigroup $(\re^{tA})_{t \geq 0}$ on $X$, and let $S \subset \mathcal{B}(X)$ be compact with respect to the uniform operator topology.
    Suppose that for every $B \in S$, there exists $M_B > 0$ such that the inequality
    \begin{align}
        \label{eq:nonuniformsemigroup}
        \norm{\re^{t(A + B)}}_{\mathcal{B}(X)} \leq M_B,
    \end{align}
    holds for all $t \geq 0$.
    Then, for every $\eps > 0$, there exists $K_{\eps} > 0$ such that the inequality
    \begin{align*}
        \norm{\re^{t(A + B)}}_{\mathcal{B}(X)} \leq K_{\eps} \re^{\eps t}
    \end{align*}
    holds for all $t \geq 0$ and $B \in S$.
\end{lemma}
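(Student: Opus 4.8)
The plan is to deduce the uniform bound from the pointwise bound~\eqref{eq:nonuniformsemigroup} by a combination of the uniform boundedness principle and a short perturbative continuity argument, exploiting compactness of $S$. First I would fix $B_0 \in S$ and use~\eqref{eq:nonuniformsemigroup} together with the uniform boundedness principle (applied to the family $\{\re^{tA_0}\}_{t\geq 0}$ in $\mathcal B(X)$, where $A_0 = A+B_0$) to promote the pointwise-in-$t$ bound to a bound that is uniform in $t$; this is immediate here since the hypothesis already gives $\sup_{t\geq 0}\norm{\re^{t(A+B_0)}}_{\mathcal B(X)}\leq M_{B_0}$, so no Baire argument is actually needed for that step. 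The real content is to make the constant uniform over $B$ as well. For this I would write, for $B$ close to a fixed $B_0 \in S$, the perturbation $A+B = (A+B_0) + (B - B_0)$ and invoke the bounded perturbation theorem, cf.~\cite[Theorem~III.1.3]{engel_oneparameter_2000}: from $\norm{\re^{t(A+B_0)}}_{\mathcal B(X)} \leq M_{B_0}$ one gets
\begin{align*}
\norm{\re^{t(A+B)}}_{\mathcal B(X)} \leq M_{B_0}\,\re^{M_{B_0}\norm{B - B_0}_{\mathcal B(X)}\, t}
\end{align*}
for all $t \geq 0$.

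Next I would turn this into the claimed $K_\eps \re^{\eps t}$ bound. Given $\eps > 0$, for each $B_0 \in S$ set $r_{B_0} \coloneq \eps / (2 M_{B_0}) > 0$ (or any radius making $M_{B_0}\norm{B-B_0}_{\mathcal B(X)} \leq \eps$ on that ball); then the displayed estimate gives $\norm{\re^{t(A+B)}}_{\mathcal B(X)} \leq M_{B_0}\re^{\eps t}$ for all $B$ in the open ball $\cB(B_0, r_{B_0})$ and all $t\geq 0$. The balls $\{\cB(B_0,r_{B_0})\}_{B_0\in S}$ form an open cover of the compact set $S$ (in the uniform operator topology, which is exactly the topology with respect to which $S$ is assumed compact), so finitely many of them, say centered at $B_0^{(1)},\dots,B_0^{(N)}$, already cover $S$. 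Taking $K_\eps \coloneq \max_{1\leq j\leq N} M_{B_0^{(j)}}$ then yields $\norm{\re^{t(A+B)}}_{\mathcal B(X)} \leq K_\eps \re^{\eps t}$ for every $B \in S$ and every $t\geq 0$, as desired.

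A couple of technical points I would address en route: I should note that $A+B$ indeed generates a $C_0$-semigroup for every $B\in\mathcal B(X)$ (again by the bounded perturbation theorem), so the objects $\re^{t(A+B)}$ make sense; and I should make sure the constant in the bounded perturbation estimate is written with the \emph{same} $M_{B_0}$ as appears in the exponent, which is the standard Gr\"onwall-type form $\norm{\re^{t(A+B_0+C)}} \leq M\re^{M\norm C t}$ when $\norm{\re^{t(A+B_0)}}\leq M$ for all $t$. I expect the main — though still mild — obstacle to be purely bookkeeping: ensuring that the perturbative radius $r_{B_0}$ is chosen so that the product $M_{B_0}\norm{B-B_0}_{\mathcal B(X)}$ stays below $\eps$ uniformly on the ball, and then correctly extracting a \emph{single} constant $K_\eps$ from the finite subcover rather than accidentally letting it depend on $B$. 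There is no deep difficulty here; the only genuine ingredients are the bounded perturbation theorem and compactness of $S$.
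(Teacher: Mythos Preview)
Your proposal is correct and follows essentially the same argument as the paper: apply the bounded perturbation theorem around each $B_0\in S$ to obtain a neighborhood on which $\norm{\re^{t(A+B)}}\le M_{B_0}\re^{\eps t}$, then extract a finite subcover by compactness and take $K_\eps$ to be the maximum of the finitely many $M_{B_0}$'s. The only difference is cosmetic---you spell out the explicit Gr\"onwall-type bound $M_{B_0}\re^{M_{B_0}\norm{B-B_0}t}$ and the radius $r_{B_0}=\eps/(2M_{B_0})$, whereas the paper leaves these implicit---and your brief mention of the uniform boundedness principle is unnecessary (as you yourself note), but harmless.
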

\begin{proof}
    Fix some $B \in S$.
    By using~\eqref{eq:nonuniformsemigroup} and applying the bounded perturbation theorem, cf.~\cite[Theorem III.1.3]{engel_oneparameter_2000}, it follows that there exists an open neighborhood $U_{B} \subset \mathcal{B}(X)$ of $B$ such that the inequality
    \begin{align}
        \norm{\re^{t(A + B')}}_{\mathcal{B}(X)} \leq M_B \re^{\eps t}
    \end{align}
    holds for all $t \geq 0$ and $B' \in U_{B}$.
    The family $(U_{B})_{B \in S}$ forms an open cover of $S$.
    So, by compactness there exists a subcover $(U_{B})_{B \in T}$ with $T \subset S$ finite.
    Upon taking $K_{\eps} = \max_{B \in T} M_B$, the result follows.
\end{proof}
\begin{remark}
    The exponential term $\re^{\eps t}$ cannot be omitted from Lemma~\ref{lem:uniformsemigroup}.
    This can be seen by considering the following example:
    \begin{equation*}
    X = \bbR^2,\qquad A = 0, \qquad
    S = \cur[\bigg]{\begin{pmatrix}
    -a^2 & a \\ 0 & -a^2
\end{pmatrix}}_{a \in [0,1]}.
\end{equation*}
\end{remark}

\printbibliography

\end{document}